\documentclass[a4paper,10pt]{amsart}

\usepackage[latin1]{inputenc}
\usepackage[T1]{fontenc}

\usepackage{amsmath, amsthm, amssymb}
\usepackage{amscd}

\usepackage[super]{nth}

\usepackage[all]{xy}
\usepackage{dsfont}
\usepackage{mathtools}

\usepackage[usenames,dvipsnames]{pstricks}
\usepackage{epsfig}
\usepackage{pst-grad} 
\usepackage{pst-plot} 
\usepackage{pst-solides3d}
\usepackage{caption}
\usepackage{subcaption}
\usepackage{pstricks-add}

\usepackage[margin=2.5cm]{geometry}

\newtheorem{thm}{Theorem}[section]
\newtheorem{cor}[thm]{Corollary}
\newtheorem{prop}[thm]{Proposition}
\newtheorem{defin}[thm]{Definition}
\newtheorem{lem}[thm]{Lemma}
\newtheorem{claim}[thm]{Claim}
\newtheorem*{thm*}{Theorem}
\newtheorem*{prop*}{Proposition}
\newtheorem*{defin*}{Definition}
\newtheorem*{lem*}{Lemma}
\newtheorem*{claim*}{Claim}

\newtheorem{thmintro}{Theorem}

\theoremstyle{remark}
\newtheorem{rem}[thm]{Remark}
\newtheorem*{rem*}{Remark}

\newtheorem*{question*}{Question}
\newtheorem*{conjecture*}{Conjecture}
\usepackage{color}

\usepackage[colorlinks,citecolor=Blue]{hyperref}

\newcommand\R{\mathbb R}

\newcommand\N{\mathbb N}

\newcommand{\RP}{\mathbb{RP}}

\def\S{\mathbb S}

\newcommand\ada{A \wedge dA^{n-1}}


\def\L{\mathcal{L}}

\DeclareMathOperator{\Ker}{Ker}
\DeclareMathOperator{\voleucl}{\mathrm{vol}_{\mathrm{Eucl}}}
\DeclareMathOperator{\vol}{\mathrm{vol}}

\DeclareMathOperator{\id}{Id}

\newcommand\eps{\varepsilon}

\newcommand{\C}{\mathcal{C}}

\newcommand{\dC}{\partial \mathcal{C}}
\renewcommand{\d}{d_{\mathcal{C}}}
\newcommand{\F}{F_{\mathcal{C}}}
\newcommand{\Homeo}{\mathrm{Homeo}}
\newcommand{\E}{{\mathcal{E}}}
\newcommand{\n}{{\mathcal{N}}}
\renewcommand{\l}{\lambda}

\newcommand{\h}{\mathtt{h}} 

\newcommand{\cheegerapoids}{h_{\mathrm{Cheeger}}^{\sigma,\Omega}}
\newcommand{\cheeger}{h_{\mathrm{Cheeger}}}


\title{Laplacian and spectral gap in regular Hilbert geometries}

\author{Thomas Barthelm\'e}   
\address{Department of Mathematics, Tufts University, Medford, MA}
\email{thomas.barthelme@tufts.edu}
\urladdr{\href{https://sites.google.com/site/thomasbarthelme/}{https://sites.google.com/site/thomasbarthelme/}}

\author{Bruno Colbois}
\address{Institut de math\'ematiques, Universit\'e de Neuch\^{a}tel, Neuch\^{a}tel}
\email{bruno.colbois@unine.ch}

\author{Micka\"el Crampon}
\address{Departamento de Matem\'atica y Ciencia de la Computaci\'on, Universidad de Santiago de Chile, Santiago de Chile}
\email{mickael.crampon@usach.cl}
\urladdr{\href{http://mikl.crampon.free.fr/}{http://mikl.crampon.free.fr/}}

 \author{Patrick Verovic}
\address{Laboratoire de math\'ematiques (LAMA), Universit\'e de Savoie, 
Le Bourget-du-Lac}
\email{Patrick.Verovic@univ-savoie.fr}

\thanks{The first author was partially supported by the FNS grant no.\ 20-137696/1. The third author was partially supported by the CONICYT grant no.\ 3120071}
\begin{document}

\begin{abstract}
 We study the spectrum of the Finsler--Laplace operator for regular Hilbert geometries, defined by convex sets with $C^2$ boundaries. We show that for an $n$-dimensional geometry, the spectral gap is bounded above by $(n-1)^2/4$, which we prove to be the infimum of the essential spectrum. We also construct examples of convex sets with arbitrarily small eigenvalues. 
\end{abstract}
\maketitle

\tableofcontents

\section{Introduction}

Hilbert geometries, introduced by David Hilbert to illustrate the fourth of his twenty-three problems, are among the most simple and studied examples of Finsler geometries. They can be considered as a generalization of hyperbolic geometry in the context of metric geometry, and a general and now well studied question is to understand if they inherit the same geometric or analytic properties as the hyperbolic space; see for instance \cite{HandbookHilbert} for a good overview.\\
In \cite{moi:natural_finsler_laplace}, the first author introduced and began to study a new generalization of the Laplace operator to Finsler geometry. It thus gives another analytical tool to understand the differences between Hilbert geometries and the hyperbolic space. For the $n$-dimensional hyperbolic space, the spectrum of the Laplace operator is known to be the interval $[(n-1)^2/4,\infty)$. In particular, it consists only of its essential part, and there is no eigenvalue below $(n-1)^2/4$ (see for example \cite{Donnelly_essential_spectrum}). In this article, we will see that the bottom of the essential spectrum of a regular $n$-dimensional Hilbert geometry is also $(n-1)^2/4$, but that, in contrast with hyperbolic geometry, a lot of arbitrarily small eigenvalues could appear under the essential spectrum.

\subsection{Finsler and Hilbert metrics}

\begin{defin} \label{def:finsler_metric}
Let $M$ be a manifold. A \emph{Finsler metric} on $M$\ is a continuous function ${F \colon TM \rightarrow \R^+}$ that is:
\begin{enumerate}
  \item $C^{2}$, except on the zero section;
  \item positively homogeneous, that is, $F(x,\lambda v)=\lambda F(x,v)$\ for any $\lambda\geqslant 0$;
  \item positive-definite, that is, $F(x,v)\geq0$\ with equality iff $v=0$;
  \item strongly convex, that is, $ \left(\dfrac{\partial^2 F^2}{\partial v_i \partial v_j}\right)_{i,j}$ is positive-definite.
 \end{enumerate}
\end{defin}

A \emph{Hilbert geometry} is a metric space $(\C,\d)$ where
\begin{itemize}
 \item $\C$ is a properly convex open subset of the projective space $\RP^n$; \emph{properly convex} means that $\C$ contains no affine line; in other words, it appears as a relatively compact open set in some affine chart.
 \item $\d$ is a metric on $\C$ is defined in the following way (see Figure \ref{fig_hilbert_distance}): for $x,y \in \C$, let $a$ and $b$ be the intersection points of the line $(xy)$ with $\dC$; then
\begin{equation*}
 d_{\C}(x,y) = \frac{1}{2} |\ln [a:b:x:y]|,
\end{equation*}
where $[a:b:x:y]$ is the cross-ratio of the four points; if we identify the line $(xy)$ with $\R\cup\{\infty\}$, it is defined by $[a:b:x:y]=\frac{|ax|/|bx|}{|ay|/|by|}$ . 
\end{itemize}
When $\C$ is an ellipsoid, the Hilbert geometry of $\C$ gives the Klein--Beltrami model of hyperbolic space.\\

The Hilbert metric $\d$ is generated by a field of norms $\F$ on $\C$, i.e., $\d(x,y)=\inf \int_{0}^{1} F(c(t), c'(t))\ dt$, where the infimum is taken over all $C^1$ curves $c\colon [0,1]\longrightarrow \C$ from $x$ to $y$. In an affine chart containing $\C$ as a relatively compact subset, the norm $F(x,u)$ of a tangent vector $u\in T_x\C$ is given by the formula
\begin{equation*}
 F_{\C}(x,u) = \frac{|u|}{2} \left(\frac{1}{|u^-x|} +\frac{1}{|xu^+|}\right),
\end{equation*}
where $|\ \cdot\ |$ is an arbitrary Euclidean metric on the affine chart, and $u^+$ and $u^-$ are the intersection points of the line $x+\R.u$ with the boundary $\dC$ (see Figure \ref{fig_finsler_metric}).

\begin{figure}[h]
\begin{subfigure}[b]{0.48\textwidth}
\centering
 \includegraphics[width=0.8\textwidth]{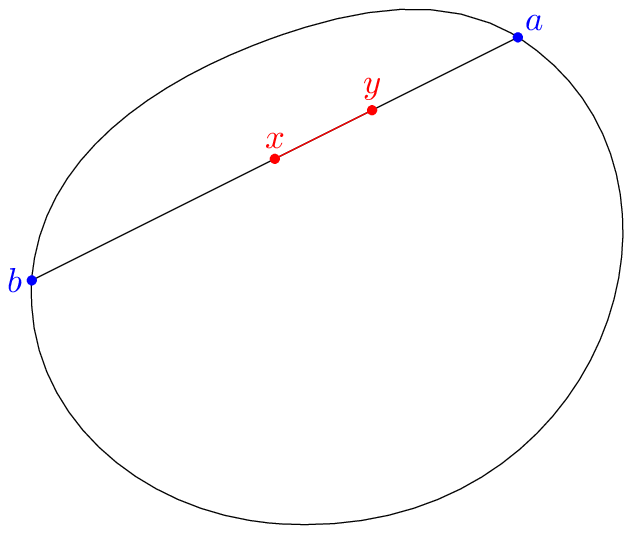}
\caption{${d_{\C} (x,y) = \left|\ln [a:x:y:b]\right|/2 }$}
\label{fig_hilbert_distance}
\end{subfigure}
\begin{subfigure}[b]{0.48\textwidth}
\centering
\includegraphics[width=0.8\textwidth]{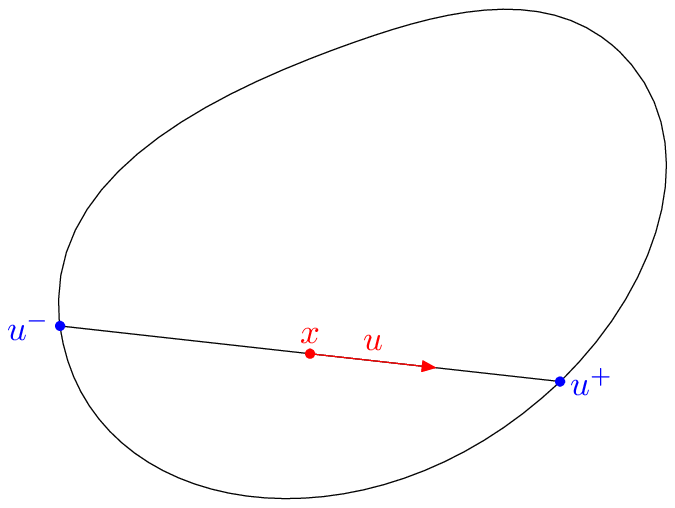}
\caption{${F_{\C} (x,u) =\left(1/|u^-x| + 1/|xu^+|\right)|u|/2}$}
\label{fig_finsler_metric} 
\end{subfigure}
\caption{}
\end{figure}

In general, a Hilbert geometry fails to be a Finsler space due to regularity issues: the regularity of $\F$ depends on the boundary of $\C$, so $\F$ does not necessarily satisfy the first and fourth points of Definition \ref{def:finsler_metric}. However, when $\C$ has a $C^2$ boundary with positive definite Hessian (see section \ref{sec_hessian}), $F_{\C}$ is a Finsler metric. In this case, the Hilbert geometry is called \emph{regular} and we can prove that its flag-curvature is constant equal to $-1$ (\cite{Fou:EquaDiff}).

\subsection{Main results}

The definition of the Finsler--Laplace operator is recalled in section \ref{sec:operator}. As for the Riemannian one, it is an unbounded elliptic operator on a Sobolev space contained in the $L^2$ functions. As such, the Finsler Laplacian admits a spectrum which splits into a discrete part, which, if non-empty, consists only of eigenvalues of finite multiplicity, and the essential spectrum. In the case at hand, there will always be an essential spectrum as we are considering non-compact manifolds.

In hyperbolic space, the spectrum of the Laplace--Beltrami operator is the interval $\left[(n-1)^2/4, +\infty \right)$ and, therefore, has no discrete part. In the case of regular Hilbert geometries, we prove the following:
\begin{thmintro} \label{thmintro_bottom}
  Let $\lambda_1(\C)$ be the bottom of the spectrum of the Finsler Laplacian of a regular Hilbert geometry $(\C,\d)$. Then
\begin{equation*}
 0 < \lambda_1(\C) \leqslant  \frac{(n-1)^2}{4}.
\end{equation*}
\end{thmintro}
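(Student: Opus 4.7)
The statement has two parts, which I would address independently.

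\emph{Positivity $\lambda_1(\C)>0$.} I would use a Cheeger-type inequality. The first step is to show that a regular Hilbert geometry has a positive Cheeger constant $\cheeger(\C) > 0$. This should follow from the $C^2$ regularity of $\dC$: near the boundary, the Finsler norm $\F$ is pinched between controlled multiples of $|\cdot|/d_{\mathrm{Eucl}}(\cdot,\dC)$, which yields uniform isoperimetric bounds analogous to the ones satisfied by hyperbolic space in the Klein--Beltrami model. Combined with a Cheeger-type inequality for the Finsler Laplacian---$\lambda_1(\C) \geq \cheeger(\C)^2/4$---this gives $\lambda_1(\C)>0$. The presence of the macros $\cheeger$ and $\cheegerapoids$ in the preamble suggests this is the route the authors follow.

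\emph{Upper bound $\lambda_1(\C)\leq (n-1)^2/4$.} I would use the variational (min--max) characterization of the bottom of the spectrum together with a Brooks--Sullivan-type test-function argument. The crucial quantitative input is the fact that, for a regular Hilbert geometry, the volume of a metric ball grows exponentially at rate $n-1$, namely $\vol(B(x_0,R)) \sim C\, e^{(n-1)R}$ as $R\to\infty$; this should be a consequence of the flag-curvature being constant equal to $-1$, together with a standard volume comparison argument. Given this, I would construct a sequence of radial test functions $f_R(x)=\phi_R(\d(x,x_0))$, where $\phi_R$ is a compactly supported approximation of $r\mapsto e^{-(n-1)r/2}$ on $[0,R]$ with linear cutoffs on $[0,\eps R]$ and $[(1-\eps)R,R]$. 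A direct coarea computation, splitting the Rayleigh quotient into the contributions of the bulk $[\eps R,(1-\eps)R]$ and the two cutoff regions, shows that the bulk contribution is exactly $(n-1)^2/4$ and that, thanks to the exponential growth at rate $n-1$, the error terms coming from the cutoffs go to $0$ as $R\to\infty$ and $\eps\to 0$.

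\emph{Main obstacle.} The genuine work lies in the two geometric ingredients---positivity of the Cheeger constant and exponential volume growth of rate exactly $n-1$---both of which use the $C^2$ regularity of $\dC$ in an essential way, and in writing down the Dirichlet energy for Barthelm\'e's Finsler Laplacian in a form to which the coarea formula can be applied. Once these are in hand, the analytic arguments (Cheeger inequality on one side, Brooks-type test functions on the other) are essentially classical.
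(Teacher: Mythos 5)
Your plan splits the theorem into the two inequalities and attacks each with the right general flavour of tools, but it misses the one mechanism that actually makes the upper bound work in this Finsler setting, and it guesses the wrong route for the lower bound.

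\textbf{Lower bound.} You propose a Cheeger-constant argument. The paper instead uses something more elementary: a regular Hilbert geometry is globally bi-Lipschitz to the hyperbolic metric of an inscribed ellipsoid (Colbois--V\'erovic), and the Barthelm\'e--Colbois energy-comparison theorem transfers the hyperbolic Rayleigh quotient bound directly, giving $\lambda_1 \geqslant C^{-K}(n-1)^2/4$ for explicit constants via the Min--Max principle. Your Cheeger route is not wrong in principle (and the macros you spotted are indeed used -- but in Section~5, for the \emph{essential} spectrum, not here), yet it requires proving a positive isoperimetric constant, which in turn would be deduced from precisely the same bi-Lipschitz comparison; the paper simply skips the detour through isoperimetry.

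\textbf{Upper bound.} The outline -- exponentially decaying radial test functions, Min--Max, exponential volume growth -- is the right shape, but the point you flag as the ``main obstacle'' is in fact the crux, and your sketch does not resolve it. For the Finsler--Laplace operator the Dirichlet energy is $E^F(f)=\int_M \lVert df\rVert_{\sigma^F}^2\,\Omega^F$, where the symbol $\sigma^F$ depends on the angle form $\alpha^F$ and hence on first derivatives of $F$. In full generality one only has $\sigma^F\leqslant n\,(F^\ast)^2$, which yields the weaker bound $4\lambda_1\leqslant n h^2$ (this is exactly the result of Barthelm\'e's thesis referenced in the introduction). To kill the factor $n$, the paper introduces the notion of an \emph{asymptotically Riemannian} Finsler metric (Definition~\ref{def_asympt_riemannian}), proves in Section~\ref{sec_behavior_at_infinity_regular_hilbert} that regular Hilbert geometries have this property (Corollary~\ref{cor_suffit}), and then shows -- via continuity of the Legendre transform, Lemma~\ref{lem_continuity_l}, and Proposition~\ref{prop_control_of_sigma} -- that a pointwise $C$-bi-Lipschitz comparison to a Riemannian metric pinches $\sigma^F$ between $(C')^{-1}g^\ast$ and $C'g^\ast$ with $C'\to 1$ as $C\to 1$. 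This is what makes $\lVert d\rho\rVert_{\sigma^F}\to 1$ outside large balls and hence the Rayleigh quotient of $e^{-s\rho}$ tend to $s^2$. Your proposal never mentions asymptotic Riemannianness and instead asks for exact asymptotics $\vol B(x_0,R)\sim Ce^{(n-1)R}$, which is both stronger than what is available for an arbitrary regular Hilbert geometry and irrelevant to the actual difficulty: the paper only uses that the volume entropy equals $n-1$, together with the symbol control. Without the symbol control your ``bulk contribution equals $(n-1)^2/4$'' step is unjustified, and the argument only delivers the weaker constant $n(n-1)^2/4$.
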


Let us make some remarks about this theorem.
\begin{itemize}
 \item A study of spectral gaps in (regular and non-regular) Hilbert geometries was already launched by the second author and C.\ Vernicos \cite{ColboisVernicos_bas_du_spectre,Vernicos:spectral_radius_hilbert}. The spectral gap they were considering turns out to be associated, in the regular case, to the \emph{non-linear} Laplacian introduced by Z.\ Shen \cite{Shen:non-linear_Laplacian}, and their techniques and difficulties differ from ours. In particular, in \cite{Vernicos:spectral_radius_hilbert}, Vernicos proves that the spectral gap he considers is also less than $(n-1)^2/4$, but the difficulties for his proof appear only when considering non-regular Hilbert metrics, contrarily to us.
 \item For regular Hilbert metrics the volume entropy is always equal to $n-1$ \cite{ColboisVerovic:hilbert_geometry}. So, Theorem \ref{thmintro_bottom} in particular tells us that the inequality $4\lambda_1 \leqslant h^2$, which is true for all simply connected non-positively curved Riemannian manifolds, still holds for regular Hilbert geometries.
 \item In \cite{moi:these} the first author proved that, for negatively curved Finsler manifolds, the inequality $4\lambda_1 \leqslant n h^2$ holds, where $n$ is the dimension of the manifold. For general non-compact negatively curved Finsler manifolds, it is far from clear that the factor $n$ can be removed. In this article, we prove it for what we call \emph{asymptotically Riemannian Finsler metrics} of which Hilbert metrics are a nice example. This means that the Finsler metric gets infinitely close to Riemannian outside sufficiently big compact sets (see Section \ref{sec_bottom_of_spectrum}).
\end{itemize}

Our second result shows that the difference between regular Hilbert geometry and hyperbolic geometry does not appear in the essential spectrum (or, at least, not in its infimum):

\begin{thmintro} \label{thmintro_essential_bottom}
The bottom of the essential spectrum $\inf \sigma_{\textrm{ess}}(\C)$ of the Finsler Laplacian of a regular Hilbert geometry $(\C,\d)$ satisfies
\begin{equation*}
\inf \sigma_{\textrm{ess}}(\C) =  \frac{(n-1)^2}{4}.
\end{equation*}
\end{thmintro}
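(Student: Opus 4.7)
My approach is to invoke Persson's characterization of the bottom of the essential spectrum,
\begin{equation*}
\inf \sigma_{\mathrm{ess}}(\C) = \sup_{K \Subset \C} \lambda_0(\C \setminus K),
\end{equation*}
where $\lambda_0(\C \setminus K)$ denotes the infimum of the Rayleigh quotient for the Finsler--Laplace operator over smooth functions compactly supported in $\C \setminus K$. I then establish the two inequalities separately.

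For the upper bound $\inf \sigma_{\mathrm{ess}}(\C) \leqslant (n-1)^2/4$, fix a basepoint $o \in \C$ and let $\rho(x) = \d(o,x)$. For large $R$ and $\alpha > (n-1)/2$, I would take radial annular test functions of the form $f_{R,\alpha}(x) = \chi(\rho(x) - R)\, e^{-\alpha \rho(x)}$, where $\chi$ is a fixed smooth bump of compact support. Their supports escape every compact set as $R \to \infty$. Using the coarea formula, the fact (due to Colbois--Verovic) that the volume entropy of a regular Hilbert geometry equals $n-1$, together with $\F(x,\nabla\rho) \equiv 1$ along radial geodesics, one checks that the Rayleigh quotient of $f_{R,\alpha}$ tends to $\alpha^2$ as $R \to \infty$. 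Sending first $R \to \infty$ and then $\alpha \searrow (n-1)/2$ shows $\lambda_0(\C \setminus K) \leqslant (n-1)^2/4$ for arbitrarily large compacta $K$, hence the upper bound.

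For the lower bound $\inf \sigma_{\mathrm{ess}}(\C) \geqslant (n-1)^2/4$, the key input is that a regular Hilbert geometry is asymptotically Riemannian in the sense of Section \ref{sec_bottom_of_spectrum}: the Finsler unit balls $\{\F(x,\cdot) \leqslant 1\}$ become arbitrarily close to ellipsoids as $x$ approaches $\dC$, thanks to the $C^2$-regularity of $\dC$ with positive definite Hessian. I would promote this to a quantitative comparison: for every $\eps > 0$ there exist a compact $K_\eps \subset \C$ and a smooth Riemannian metric $g_\eps$ on $\C \setminus K_\eps$ of sectional curvature bounded above by $-(1-\eps)$ such that the Finsler--Laplace Rayleigh quotient and the $g_\eps$-Rayleigh quotient of any $f \in C_c^\infty(\C \setminus K_\eps)$ agree up to a multiplicative factor $1 + O(\eps)$. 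McKean's inequality applied to $g_\eps$ then yields $\lambda_0(\C \setminus K_\eps) \geqslant (1-\eps)(n-1)^2/(4(1+O(\eps)))$, and letting $\eps \to 0$ concludes via Persson.

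The main technical obstacle is this quantitative asymptotic Riemannian comparison. Since the Finsler--Laplace operator is defined by an averaging procedure over the Finsler unit sphere, matching it with a Riemannian Laplacian requires controlling not only the ellipsoidalization of $\F(x,\cdot)$, but also the first derivatives of $\F$, which enter into both the induced volume form and the dual norm. One must exploit the $C^2$-smoothness of $\dC$ uniformly along rays going to the boundary, so that the resulting error terms are genuinely $o(1)$ rather than merely bounded, and then translate this pointwise control into an $L^2$ comparison of Rayleigh quotients.
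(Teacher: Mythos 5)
Your overall skeleton is sound: Persson's characterization of $\inf\sigma_{\mathrm{ess}}$ is essentially the paper's lemma expressing the bottom of the essential spectrum as $\lim_i \lambda_1(\C\setminus M'_i)$, and your upper-bound argument via radial exponential test functions $\chi(\rho-R)e^{-\alpha\rho}$ is close in spirit to the paper's Corollaries \ref{cor_bound_for_dirichlet_eigenvalue} and \ref{cor_dirichlet_spectrum_for_hilbert} (one caveat: the Rayleigh quotient involves $\lVert d\rho\rVert_{\sigma^F}$, not $\lVert d\rho\rVert_{F^*_{\C}}$, so you already need the asymptotic-Riemannian control of the symbol from Proposition \ref{prop_control_of_sigma} to conclude that it tends to $\alpha^2$; and you need a genuine lower volume-growth estimate $\mathrm{vol}(B(R))\gtrsim e^{(n-1)R}$, not just the $\limsup$ defining the entropy, which the paper extracts from Colbois--Verovic).

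The real gap is in your lower bound. You want to produce, for every $\eps>0$, a \emph{smooth} Riemannian metric $g_\eps$ on a neighbourhood of infinity with sectional curvature $\leqslant -(1-\eps)$ comparable to $\F$, and then apply McKean. But the asymptotic-Riemannian property established in Section \ref{sec_behavior_at_infinity_regular_hilbert} is only a \emph{pointwise bi-Lipschitz} ($C^0$) statement: for each $x\in\dC$ there is a hyperbolic metric $\h_x$ that is $C$-close to $\F$ along the ray to $x$, and gluing these into a single Riemannian metric (Corollary \ref{cor_suffit}) produces something that is only \emph{continuous}, with no control whatsoever on its derivatives. Being $C^0$-close to a family of hyperbolic metrics does not constrain sectional curvature, which depends on two derivatives of the metric tensor; nearby rays use different osculating ellipsoids $S_x$, and the transversal variation could a priori be wild. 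No smoothing lemma with curvature control is available from what is proved, so McKean's inequality cannot be invoked. The paper sidesteps exactly this difficulty by replacing McKean with the weighted Cheeger inequality (Proposition \ref{prop_Cheeger_inequality}) and the Donnelly--Li decomposition principle: the area-to-volume ratio needed for Cheeger is computed \emph{piecewise} inside cones $K_{x_i}$, each time against a \emph{genuine} hyperbolic metric $\h_{x_i}$ of curvature $-1$ (Claim \ref{claim_computation_in_hyperbolic}), and the $C^0$-comparison suffices to transfer these area and volume estimates to the symbol metric $\sigma$. In short, Cheeger needs only zeroth-order metric control, whereas McKean needs second-order control that is not established and is likely false for the ``glued'' comparison metric.
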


Below $\frac{(n-1)^2}{4}$, the spectrum of the Laplace operator is thus entirely discrete. It is then natural to ask if there is always an eigenvalue below $\frac{(n-1)^2}{4}$. We know that this does not happen in the hyperbolic space and we make the following
\begin{conjecture*}
Let $(\C,\d)$ be a regular Hilbert geometry. The equality $\lambda_1(\C) =  \frac{(n-1)^2}{4}$ holds if and only if $\C$ is an ellipsoid.
\end{conjecture*}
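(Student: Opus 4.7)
The plan is to prove the two directions separately.

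\emph{Easy direction.} If $\C$ is an ellipsoid, then $(\C,\d)$ is isometric to $\Hyp^n$ and the Finsler--Laplacian reduces to the hyperbolic Laplace--Beltrami operator. Since the spectrum of $\Hyp^n$ is $[(n-1)^2/4,+\infty)$, we have $\lambda_1(\C) = (n-1)^2/4$.

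\emph{Hard direction, Step 1: Busemann rigidity.} Assume $\lambda_1(\C) = (n-1)^2/4$. By Theorem~\ref{thmintro_essential_bottom} the essential spectrum starts at $(n-1)^2/4$, so no $L^2$-eigenfunction lies strictly below. However, in the asymptotically Riemannian setting one expects a positive generalized eigenfunction at the bottom of the spectrum; in hyperbolic space these are exactly $\varphi_\xi := e^{-(n-1)b_\xi/2}$ for $\xi\in\dC$. The first task is to show that $\varphi_\xi$ is a generalized eigenfunction of $\Delta^F$ with eigenvalue $(n-1)^2/4$ for every $\xi$. A direct computation on this ansatz, using $F_\C^*(db_\xi)\equiv 1$, gives $-\Delta^F \varphi_\xi = \tfrac{(n-1)^2}{4}\,\varphi_\xi$ if and only if $\Delta^F b_\xi \equiv n-1$, so Step~1 amounts to proving
\begin{equation*}
\Delta^F b_\xi \equiv n-1 \quad \text{on } \C,\ \text{for every } \xi\in\dC.
\end{equation*}

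\emph{Step 2: Shape rigidity.} In the hyperbolic case, $\Delta^F b_\xi \equiv n-1$ expresses the classical fact that every horosphere has constant mean curvature $n-1$. In the Finsler--Hilbert setting it becomes a fully nonlinear equation on $\dC$: unwinding the Finsler--Laplacian as an average over the indicatrix with respect to the angle measure $\sigma$, the identity couples the second fundamental form of $\dC$ at $\xi$, the shape of the indicatrix field of $F_\C$, and the averaging measure---all of which are determined by $\dC$. Varying $\xi$ over $\dC$ produces a projectively-invariant system of PDEs whose solutions one wants to classify: the goal is to show that only ellipsoids satisfy it, either by recasting it as a Monge--Amp\`ere-type equation on the support function of $\C$ (with quadrics as the only solutions up to $\mathrm{PGL}_{n+1}(\R)$-action), or by affine-differential arguments using the affine normal and Blaschke--Pick invariants of $\dC$.

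\emph{Main obstacle.} Step~2 is the principal difficulty: disentangling the three layers of $\dC$-dependence---second fundamental form, indicatrix, averaging measure---inside $\Delta^F b_\xi$ and reducing to a tractable convex-geometric rigidity statement is non-trivial, precisely because the Hilbert Finsler metric is built from $\dC$ in an intricately coupled way. A secondary difficulty in Step~1 is proving the existence and the stated form of the positive generalized eigenfunction at the bottom of the essential spectrum; in the Riemannian case this follows from Sullivan's theory together with maximum-principle arguments at infinity, but adapting these tools to the Finsler--Hilbert context will require care.
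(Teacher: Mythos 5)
This statement is explicitly labelled a \emph{conjecture} in the paper; the authors write that they ``are not yet able to prove this conjecture,'' so there is no proof to compare against, and your submission---being a research plan with two acknowledged unfinished steps---does not close that gap either. Your easy direction is correct and is the observation the paper itself makes. For the hard direction, Step~2 is stated as a hope with no argument, so there is nothing to assess beyond agreeing with you that it is the crux.

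There is, however, a concrete technical error already in Step~1. For a second-order operator with symbol $\sigma$, the identity $\Delta(e^{-su}) = \bigl(s^2\,\sigma(du,du) - s\,\Delta u\bigr)e^{-su}$ shows that the eigenvalue equation for $\varphi_\xi = e^{-(n-1)b_\xi/2}$ reduces to $\Delta^F b_\xi \equiv n-1$ \emph{only if} $\sigma^F(db_\xi,db_\xi)\equiv 1$. You invoke instead $F_{\C}^*(db_\xi)\equiv 1$, which is indeed true for a Busemann function, but the unit cosphere of $F_{\C}^*$ and the unit cosphere of the symbol metric $\sigma^F$ are \emph{different} hypersurfaces whenever $F_{\C}$ is not Riemannian: $\sigma^F$ is the fiberwise average of rank-one forms $\xi\mapsto\xi(v)^2$ over the indicatrix, and there is no reason for $\sigma^F(p,p)$ to equal $F_{\C}^*(p)^2$. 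So the claimed equivalence does not follow from the stated ingredient, and even the first step would need a separate argument controlling $\sigma^F(db_\xi,db_\xi)$. Relatedly, the interpretation of $\Delta^F b_\xi$ as a mean curvature of horospheres is a Riemannian fact; transplanting it to the Finsler--Laplacian requires justification. None of this is fatal to the spirit of your plan, but it means that, as written, Step~1 is not yet a computation and Step~2 is not yet an argument.
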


We are not yet able to prove this conjecture, but we show the following:

\begin{thmintro} \label{thmintro_small_eigenvalues}
Let $\eps >0$ and $N \in \N$. There exists a regular Hilbert geometry whose first $N$ eigenvalues are below $\eps$.
\end{thmintro}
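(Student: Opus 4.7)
The plan is a variational argument combined with an approximation step.

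First, by the Courant--Fischer min-max principle applied to the Finsler Laplacian, to exhibit a regular Hilbert geometry $(\C,\d)$ whose first $N$ eigenvalues lie below $\eps$, it suffices to construct $N$ smooth compactly supported functions $\phi_1, \dots, \phi_N$ on $\C$ with pairwise disjoint supports such that each Rayleigh quotient
\[
\mathcal{R}_{\C}(\phi_i) \;=\; \frac{\int_\C F_\C^*(d\phi_i)^2 \, d\mu_\C}{\int_\C \phi_i^2 \, d\mu_\C}
\]
is smaller than $\eps$. Disjointness of the supports makes the $\phi_i$ automatically orthogonal in $L^2$, and they span an $N$-dimensional subspace of the form domain on which the Dirichlet form is majorised by $\eps\lVert \cdot \rVert_{L^2}^{2}$.

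The idea is to produce $\C$ as a $C^{2}$ regularisation of a limiting non-regular convex body that already carries such test functions. The natural candidate is an $n$-simplex $\P \subset \R^n$: a classical result of de la Harpe identifies the Hilbert geometry of a simplex, as a Finsler space, with a Minkowski (normed) geometry on $\R^n$. In particular, for any $R>0$ one can find $N$ pairwise disjoint Hilbert balls $B_i \subset \P$ of radius $R$; on each such ball the standard radial tent function $\phi_i(x) = \max(0,\, 1 - d_{\P}(x,x_i)/R)$ is supported in $B_i$ and satisfies $F_\P^*(d\phi_i) \le 1/R$ almost everywhere. A direct Euclidean-type computation in the Minkowski model gives $\mathcal{R}_\P(\phi_i) = O(1/R^2)$, which we make smaller than $\eps/2$ by choosing $R$ large enough.

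Next, one approximates $\P$ from outside in the Hausdorff metric by a sequence of $C^2$ strictly convex bodies $\C_k \supset \P$ with positive definite Hessian (standard via convolution plus a small quadratic inflation). For $k$ large the supports of the $\phi_i$ lie in a fixed compact subset $K \subset \P^{\circ}$, on which the Hilbert Finsler norm $F_{\C_k}$, its cotangent dual $F_{\C_k}^*$, and the Finsler-Laplace volume density converge uniformly to the analogous quantities for $\P$. Continuity of $F_\C$ (hence of $F_\C^*$) in $\C$ is immediate from the explicit cross-ratio formula. Since the integrands in the Rayleigh quotient are then uniformly close on $K$, we can arrange $\mathcal{R}_{\C_k}(\phi_i) < \mathcal{R}_\P(\phi_i) + \eps/2 < \eps$ for all $i$ and for $k$ large enough. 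Setting $\C := \C_k$ for such a $k$ yields a regular Hilbert geometry with $\lambda_j(\C)<\eps$ for $j=1,\dots,N$.

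The main obstacle is the continuity of the Finsler-Laplace volume density under Hausdorff approximation: unlike the Hilbert distance, which is given by a simple cross-ratio, this density is defined in \cite{moi:natural_finsler_laplace} by an average over the cotangent unit sphere of $F_\C$, and its continuity requires controlling how these spheres vary with $\C$ uniformly on the compact $K$. Once this point is settled, the remaining ingredient---that tent functions in a Minkowski space have Rayleigh quotient $O(1/R^2)$---is a routine Euclidean estimate.
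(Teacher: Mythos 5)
Your plan is essentially the paper's: approximate a simplex from outside by $C^2$ strictly convex bodies, place $N$ radial tent functions on pairwise disjoint Hilbert balls inside, exploit de la Harpe's identification of the simplex geometry with a normed space to bound their Rayleigh quotients, pass to the limit by continuity of the Finsler norm and the Holmes--Thompson weight, and invoke Min--Max. The one point you flag as unsettled---continuity of the volume density along the approximation---is exactly what the paper closes in Lemma~\ref{lem_uniform_convergence_of_convex}: the sandwich $S_\infty\subset\C_m\subset S_m$ yields the pointwise bi-Lipschitz comparison $F_{S_m}\leqslant F_{\C_m}\leqslant F_{S_\infty}$, which converges uniformly to $1$ on compacts, and this controls the weight $\mu_m$ via Lemma~\ref{lem_control_of_m_mu}.
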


In particular, we can find a regular Hilbert geometry with as many eigenvalues below the essential spectrum as we want. As the flag curvature of regular Hilbert metrics is always equal to $-1$, this gives examples of Finsler metrics of constant negative curvature with eigenvalues as small as we want.

\subsection*{Structure of this paper}
In the preliminaries, we recall the construction of the Finsler Laplacian and its basic properties. We also introduce the Legendre transform that will be an important tool all along the article.\\
In Section \ref{sec_behavior_at_infinity_regular_hilbert}, we prove that regular Hilbert geometries are asymptotically Riemannian, which is an interesting result in itself. \\
In Section \ref{sec_bottom_of_spectrum}, we prove Theorem \ref{thmintro_bottom} by showing that the inequality $\lambda_1 \leqslant h^2/4$ holds for asymptotically Riemannian metrics.\\
After recalling a few results about the essential spectrum of weighted Laplacians, we prove Theorem \ref{thmintro_essential_bottom} in Section \ref{sec_bottom_of_essential_spectrum}.\\
We finally construct Hilbert metrics with arbitrarily many small eigenvalues in Section \ref{sec_small_eigenvalues}.

\section{Preliminaries}

\subsection{Topology on the set of Finsler metrics on a manifold}\label{sec:topology}

In all the text, we will use the topology of uniform convergence on compact sets for Finsler metrics. Let $M$ be a smooth manifold. We say that a sequence of Finsler metrics $(F_n)$ on $M$ converges to the Finsler metric $F$ if, for any compact subset $K$ of $M$, 
$$
\lim_{n\to +\infty} \sup_{(x,u)\in TM|_K} \left|\ln \frac{F(x,u)}{F_n(x,u)}\right| = 0,
$$
where $TM|_K$ is the restriction of the tangent bundle to $K$.
This induces a topology on the set of Finsler metrics on $M$, which is metrizable: a distance between $F$ and $F'$ can be defined as
$$
d(F,F') = \sum_n \frac{1}{2^n} \min\left\{1, \sup_{(x,u)\in TM|_{K_n}} \left|\ln \frac{F(x,u)}{F'(x,u)}\right|\right\},
$$
where $(K_n)$ is an exhausting family of compact subsets of $M$.

\subsection{Finsler Laplacian}\label{sec:operator}
In this section, we quickly recall the definition of the Finsler Laplacian we consider, which uses the formalism introduced by Foulon \cite{Fou:EquaDiff}. All the proofs and details can be found in \cite{moi:these,moi:natural_finsler_laplace}.

Let $M$ be an $n$-dimensional smooth manifold. Let $HM$\ be the \emph{homogeneous bundle} or direction bundle, that is, 
$$HM := \left(TM \smallsetminus \{0\} \right) / \R^+.$$
A point of $HM$ is a pair consisting in a point $x\in M$ and a tangent direction $\xi$ at $x$. We denote by $r \colon TM\smallsetminus \{0\}  \rightarrow HM$ and $\pi \colon HM \rightarrow M$\ the canonical projections. The bundle $VHM = \Ker d\pi \subset THM$ is called the \emph{vertical bundle}.

Let $F$ be a Finsler metric on $M$. As for a Riemannian space, the metric space $(M,F)$ is locally uniquely geodesic, the geodesics being defined through a second-order differential equation. We assume in the sequel that the Finsler metric is complete. In this case, its geodesic flow is well defined on the homogeneous bundle: given a point $(x,\xi)$ in $HM$, there is a unique unit-speed geodesic $c:\R\longrightarrow M$ passing at $x$ with tangent direction $\xi$ at time $0$.

The \emph{Hilbert form} of $F$ is the $1$-form $A$ on $HM$ defined, for $(x,\xi) \in HM$, $Z \in T_{(x,\xi)}HM$, by
\begin{equation}
 A_{(x,\xi)}(Z) := \lim_{\eps \rightarrow 0} \frac{F\left(x, v + \eps d\pi(Z) \right) - F\left( x,v \right)}{\eps},
\end{equation}
where $v \in T_xM$ is any vector such that $r(x,v) = (x,\xi)$. The Hilbert form contains all the necessary information about the dynamics of the Finsler metric:

\begin{thm} 
 The form $A$ is a contact form: $\ada$ is a volume form on $HM$.
 Let $X \colon HM \rightarrow THM$ be the Reeb field of $A$, that is, the only solution of 
\begin{equation}
\label{eq:Reeb_field}
\left\{ 
\begin{aligned}
  A(X) &= 1 \\
 i_X dA &= 0  \, .
 \end{aligned}
\right.
\end{equation}
The vector field $X$ generates the geodesic flow of $F$.
\end{thm}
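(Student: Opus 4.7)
The plan is to work in local bundle coordinates, verify the contact condition directly from strong convexity, and then identify the Reeb field with the geodesic spray via the Legendre transform. Pick charts $(x^i,v^i)$ on $TM\smallsetminus\{0\}$ and set $\tilde A := \tfrac{\partial F}{\partial v^i}\,dx^i$; a short calculation from the definition of $A_{(x,\xi)}$ shows that $\tilde A$ coincides with the pullback $r^*A$. Since $F$ is positively $1$-homogeneous in $v$, each coefficient $\partial F/\partial v^i$ is $0$-homogeneous, so $\tilde A$ is $\R^+$-invariant on fibres and descends to a well-defined $1$-form on $HM$, which is the coordinate-free $A$.

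For the contact property, set $g_{ij}:=\tfrac12 \tfrac{\partial^2 F^2}{\partial v^i\partial v^j}$, which is positive-definite by item (4) of Definition \ref{def:finsler_metric}. Euler's identity gives $g_{ij}v^j=F\,\partial F/\partial v^i$, so the $v$-radial direction is distinguished but the induced form on the quotient $T_xM/(\R\!\cdot\! v)$ remains positive-definite. Expanding $\tilde A\wedge(d\tilde A)^{n-1}$ in coordinates then produces, up to a nonvanishing factor, $\det(g_{ij})$ times a standard volume form on $HM$ written in $(x,\text{angular})$ coordinates; this is nowhere zero, so $A$ is contact.

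For the Reeb identification, use the Legendre transform $\mathcal L_F\colon TM\smallsetminus\{0\}\to T^*M\smallsetminus\{0\}$, $(x,v)\mapsto(x,\partial F/\partial v|_v)$, which strong convexity turns into a diffeomorphism. By construction $\mathcal L_F^*\theta=\tilde A$ for $\theta$ the Liouville $1$-form, and the geodesics of $F$ correspond under $\mathcal L_F$ to the Hamiltonian flow of $H=\tfrac12 F^2$ on $(T^*M,d\theta)$. Since $H\equiv\tfrac12$ on $\mathcal L_F(\{F=1\})$, the Hamiltonian vector field is tangent to that level set and lies in the kernel of $d\theta$ restricted there. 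Pulling back through $\mathcal L_F$ and projecting to $HM$, the geodesic spray $X$ satisfies $i_X dA=0$, while $A(X)=F=1$ on $HM$ by homogeneity. Uniqueness of the Reeb field then concludes.

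The main obstacle is the bookkeeping of $\R^+$-invariance needed to pass from $TM\smallsetminus\{0\}$ (or $T^*M\smallsetminus\{0\}$) to the $(2n-1)$-dimensional quotient $HM$: one must verify that $\tilde A$, $d\tilde A$, and the geodesic spray are all genuinely invariant under the fibrewise $\R^+$-action, and that the Reeb equations, once read on the quotient, still pin $X$ down uniquely. This is exactly the coordinate-free framework set up by Foulon in \cite{Fou:EquaDiff}, to which I would defer for the invariant formulation of the above calculations.
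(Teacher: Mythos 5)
The paper does not actually prove this theorem: it is stated as a recalled result and the proofs are deferred to Foulon's paper \cite{Fou:EquaDiff} and the first author's thesis \cite{moi:these,moi:natural_finsler_laplace}. Your argument is the standard symplectic one found in those references and is essentially correct: identify $A$ with the restriction to $\{F=1\}$ of $\partial F/\partial v^i\,dx^i$, note that this is the pullback of the Liouville form $\theta=p_i\,dx^i$ under the Legendre transform restricted to the unit tangent bundle, use strong convexity (positivity of $g_{ij}=\tfrac12 \partial^2 F^2/\partial v^i\partial v^j$) to get nondegeneracy of $d\theta$ on the unit cosphere bundle hence the contact property of $A$ on $HM$, and then read the Reeb equations off the Hamiltonian description of the geodesic flow.

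Two small points of bookkeeping worth tightening. First, the Legendre transform as defined in the paper is $(x,v)\mapsto\bigl(x,\tfrac12\partial_v F^2\bigr)=\bigl(x,F\,\partial_v F\bigr)$, which is $1$-homogeneous and a global diffeomorphism of $TM\smallsetminus\{0\}$; your $(x,v)\mapsto(x,\partial_v F)$ is the $0$-homogeneous variant, which collapses each ray to a point and is only invertible when restricted to $\{F=1\}$. The two agree on the unit tangent bundle, which is all you use, but the global statement about $\mathcal L_F$ being a diffeomorphism needs the $F\,\partial_v F$ version. Second, the Hamiltonian on $T^*M$ should be $\tfrac12(F^*)^2$ (or equivalently $L\circ\mathcal L_F^{-1}$ for $L=\tfrac12 F^2$), not $\tfrac12 F^2$ itself; again the computation at the level $\{F=1\}\leftrightarrow\{F^*=1\}$ is unaffected. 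With these adjustments the proof matches what one finds in the cited sources, including the reduction $i_X dA=0$ from $i_{X_H}d\theta=-dH$ restricted to the level set and $A(X)=1$ from Euler's identity $v^i\,\partial F/\partial v^i=F$.
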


We can now define the Finsler Laplacian. First we split the canonical volume $\ada$ into a volume form on the manifold $M$ and an angle form:
\begin{prop}
\label{prop:construction}
 There exists a unique volume form $\Omega^F$\ on $M$\ and an $(n-1)$-form $\alpha^F$\ on $HM$, never zero on $VHM$, such that
\begin{equation}
\label{eq:alpha_wedge_omega}
  \alpha^{F} \wedge \pi^{\ast}\Omega^F =  A\wedge dA^{n-1}, 
\end{equation}
and, for all $x\in M$, 
\begin{equation}
\label{eq:longueur_fibre}
 \int_{H_xM} \alpha^F =  \voleucl(\S^{n-1})\, .
\end{equation}
\end{prop}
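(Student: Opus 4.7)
The plan is to reduce existence and uniqueness to a fibrewise linear-algebra computation. The exact sequence $0 \to VHM \to THM \xrightarrow{d\pi} \pi^*TM \to 0$ induces a canonical isomorphism $\Lambda^{2n-1}T^*HM \cong \Lambda^{n-1}V^*HM \otimes \pi^*\Lambda^n T^*M$; in practice, for any positive volume form $\Omega$ on $M$, the ``quotient'' $(A\wedge dA^{n-1})/\pi^*\Omega$ is a well-defined, nowhere-vanishing $(n-1)$-form on each fibre $H_xM$. I would use this first to pin down $\alpha^F|_{VHM}$, then use the normalisation to pin down $\Omega^F$.

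In a local chart $(x^i)$ on $U\subset M$ with induced fibre coordinates $(\xi^\mu)$ on $HM|_U$, write
$$
A\wedge dA^{n-1} \;=\; g(x,\xi)\,dx^1\wedge\cdots\wedge dx^n\wedge d\xi^1\wedge\cdots\wedge d\xi^{n-1}
$$
with $g>0$, and $\Omega^F = f(x)\,dx^1\wedge\cdots\wedge dx^n$. Any component of an $(n-1)$-form $\alpha^F$ containing at least one $dx^i$ wedges to zero against $\pi^*\Omega^F$, so the equation $\alpha^F\wedge\pi^*\Omega^F = A\wedge dA^{n-1}$ is equivalent to
$$
\alpha^F|_{VHM} \;=\; \frac{g(x,\xi)}{f(x)}\,d\xi^1\wedge\cdots\wedge d\xi^{n-1}.
$$
This is automatically nowhere zero on $VHM$ since $g,f>0$, and the normalisation $\int_{H_xM}\alpha^F = \voleucl(\S^{n-1})$ forces
$$
f(x) \;=\; \frac{1}{\voleucl(\S^{n-1})}\int_{H_xM} g(x,\xi)\,d\xi^1\wedge\cdots\wedge d\xi^{n-1},
$$
a smooth and strictly positive function of $x$. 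Since every quantity involved is coordinate-independent, the local $f$'s and the local vertical prescriptions for $\alpha^F$ patch into a globally defined volume form $\Omega^F$ on $M$ and a globally defined section of $\Lambda^{n-1}V^*HM$, both uniquely characterised.

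To promote $\alpha^F$ from $VHM$ to a form on all of $HM$, I would simply extend the prescribed vertical restriction smoothly, for instance by choosing a horizontal distribution and declaring $\alpha^F$ to vanish on it. The subtle point worth flagging is that the wedge equation constrains $\alpha^F$ only through its purely vertical part: two admissible extensions differ by an $(n-1)$-form lying in the ideal generated by $\pi^*T^*M$, which wedges to zero against $\pi^*\Omega^F$. The ``uniqueness'' in the statement should therefore be read as uniqueness of $\Omega^F$ and of $\alpha^F|_{VHM}$; the ambiguity in the horizontal part of $\alpha^F$ is invisible to fibre integrals and to every subsequent use in the construction of the Finsler Laplacian. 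Beyond this bookkeeping point there is no genuine obstacle, as the whole proposition is a consequence of the canonical fibre-integration structure of the top form $A\wedge dA^{n-1}$ on the total space of $\pi\colon HM\to M$.
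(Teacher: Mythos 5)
Your argument is correct, and since the paper itself gives no proof of this proposition---it defers to the first author's earlier work \cite{moi:natural_finsler_laplace,moi:these}---there is no in-paper argument to compare against. Your local computation is the natural one, and your point that equation \eqref{eq:alpha_wedge_omega} constrains $\alpha^F$ only through its restriction to $VHM$ is both correct and worth making explicit: the asserted uniqueness should indeed be read as uniqueness of $\Omega^F$ and of $\alpha^F|_{VHM}$ (equivalently, of the class of $\alpha^F$ modulo the ideal generated by $\pi^{\ast}T^{\ast}M$), since the fibre integral in \eqref{eq:longueur_fibre}, and every subsequent use of $\alpha^F$ in the definition of $\Delta^F$, also only sees this restriction. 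In the referenced construction the remaining freedom in extending $\alpha^F$ off $VHM$ is fixed by a canonical choice of horizontal complement coming from the contact structure of $A$, which is exactly the device you propose.
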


\begin{rem}
 The volume form $\frac{1}{(n-1)!} \Omega^F$ is the Holmes--Thompson volume form (see for instance \cite{BuragoBuragoIvanov} or \cite{Alvarez:Survey} for the definition). However, we will not need in this article any specific knowledge about the Holmes--Thompson volume.
\end{rem}

The Finsler Laplacian of a function is then obtained as an average with respect to $\alpha^F$ of the second derivative in every direction:
\begin{defin}
\label{def:delta}
 For $f \in C^2(M)$, the \emph{Finsler--Laplace operator} $\Delta^F$\ is defined by
 $$
 \Delta^F f (x) = \frac{n}{\voleucl \left(\mathbb{S}^{n-1}\right) }\int_{H_xM} L_X ^2 (\pi^{\ast} f ) \alpha^F,\ x \in M,
 $$
where $L_X$ denotes the Lie derivative in the direction $X$.
\end{defin}
This definition gives a second order elliptic differential operator, which is symmetric with respect to the Holmes--Thompson volume $\Omega^F$.
The constant in front of the operator is there in order to get back the usual Laplace--Beltrami operator when $F$ is Riemannian.

The \emph{symbol} of a second-order differential operator $\Delta$ is a symmetric bilinear form on the co-tangent bundle that can be defined in the following way: Let $\xi \in T^{\ast}_xM$, then the symbol of the operator $\Delta$ at $(x,\xi)$ is 
$$
\sigma_x(\xi,\xi) = \Delta(\varphi^2)(x),
$$
where $\varphi \colon M \rightarrow \R$ is a $C^2$ function such that $\varphi(x) =0$ and $d\varphi_x= \xi$.

When the operator is elliptic, that is, when $\sigma_x(\xi,\xi) >0$ for all non-zero $\xi$, the symbol defines a dual Riemannian metric. Note that in local coordinates, the symbol is given by the matrix of the coefficients in front of the second order derivatives.

 We denote by $\sigma^F$ the symbol of $\Delta^F$, as $\Delta^F$ is elliptic, $\sigma^F$ is a dual Riemannian metric. In our case, we can express $\sigma^F$ using the form $\alpha^F$:
 For $\xi_1,\xi_2 \in T^{\ast}_x M$, we have
\begin{equation*}
 \sigma^F_x(\xi_1,\xi_2) = \frac{n}{\voleucl \left(\mathbb{S}^{n-1}\right) } \int_{H_xM} L_X(\pi^{\ast} \varphi_1) L_X(\pi^{\ast}\varphi_2)\, \alpha^F,
\end{equation*}
where $\varphi_i \in C^{\infty}(M)$ such that $\varphi_i(x)=0$ and $\left.d\varphi_i\right._x = \xi_i$. Note that, if we identify $HM$ with $S^FM$, the unitary tangent bundle for $F$, and that we consider $\alpha^F$ as a volume form on $S^FM$ (instead of $HM$), we have this visually more agreeable formula:
\begin{equation*}
 \sigma^F_x(\xi_1,\xi_2) = \frac{n}{\voleucl \left(\mathbb{S}^{n-1}\right) } \int_{v\in S^F_xM} \xi_1(v) \xi_2(v) \, \alpha^F(v).
\end{equation*}

Note that we can also see $\Delta^F$ as a weighted Laplacian (introduced in \cite{ChavelFeldman:Isoperimetric_constants,Davies:Heat_kernel_bounds}), with symbol $\sigma^F$ and weight given by the ratio between $\Omega^F$ and the Riemannian volume associated with $\sigma^F$. Indeed, we have that, if $a\in C^{\infty}(M)$ is such that $\Omega^F = a^2 \Omega^{\sigma^F}$, where $\Omega^{\sigma^F}$ is the Riemannian volume associated with $\sigma^F$, then for $\varphi \in C^{\infty}(M)$:
\begin{equation*}
 \Delta^F \varphi = \Delta^{\sigma^F} \varphi - \frac{1}{a^2} \langle \nabla \varphi , \nabla a^2 \rangle.
\end{equation*}
The description of $\Delta^F$ in terms of a weighted Laplacian will come very handy for the study of the essential spectrum in Section \ref{sec_bottom_of_essential_spectrum}.

\subsection{Energy and bottom of the spectrum} \label{subsec_energy_and_spectrum}

The Finsler Laplacian has a naturally associated \emph{energy functional} defined by
\begin{equation}
 E^F(f) := \frac{n}{\voleucl \left(\S^{n-1}\right) } \int_{HM} \left|L_X\left(\pi^{\ast}f \right)\right|^2 \ada. \label{eq_energy_finsler}
\end{equation}
The \emph{Rayleigh quotient} for $F$ is then defined by
\begin{equation}
 R^F(f) := \frac{E^F(f)}{\int_M f^2\, \Omega^F}.
\end{equation}

Let $H^1(M)$ be the Sobolev space defined as the completion of $C^{\infty}_0(M)$, the space of smooth functions with compact support, under the norm $\lVert f \rVert^2_1 = \int_M f^2\, \Omega^F + E^F(f)$.

The bottom of the spectrum of $-\Delta^F$, considered as a symmetric unbounded operator on $H^1(M)$, is given by:
\begin{equation*}
 \lambda_1 = \inf_{f \in H^1(M)} R^F(f),
\end{equation*}
 Note that, as the manifolds we are interested in in this article are not compact, the spectrum has no reason to be discrete. However, if there is a discrete spectrum below the essential one, then the eigenvalues can be obtained from the Rayleigh quotient via the Min-Max principle:
\begin{thm}[Min-Max principle] \label{thm_min_max}
Suppose that $\lambda_1$, \dots, $\lambda_k$ are the first $k$ eigenvalues (counted with multiplicity) of $-\Delta^F$ and are all below the essential spectrum, then
\begin{equation*}
 \lambda_i = \inf_{V_i} \sup \left\{ R^F(u) \mid u \in V_i \right\}
\end{equation*}
where $V_i$ runs over all the $i$-dimensional subspaces of $H^1(M)$.
\end{thm}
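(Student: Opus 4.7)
The plan is to derive the Min-Max characterization from the spectral theorem applied to the Friedrichs extension of $-\Delta^F$. Since $\Delta^F$ is a symmetric elliptic operator on $C^{\infty}_0(M)$ and $E^F$ is its associated non-negative quadratic form, standard theory gives a self-adjoint extension $\mathcal{A}$ on $L^2(M,\Omega^F)$ whose form domain is precisely $H^1(M)$ and whose spectrum is the one considered in the theorem. Let $\phi_1,\ldots,\phi_k \in H^1(M)$ be $L^2$-orthonormal eigenfunctions associated with the first $k$ eigenvalues $\lambda_1 \leq \cdots \leq \lambda_k$, all lying strictly below $\inf \sigma_{\mathrm{ess}}(-\Delta^F)$.

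For the upper bound, I would set $V_i := \mathrm{span}(\phi_1,\ldots,\phi_i) \subset H^1(M)$. For any $u = \sum_{j=1}^i c_j\phi_j \in V_i$, orthonormality and the eigenvalue equation yield
\[
R^F(u) = \frac{E^F(u)}{\int_M u^2 \, \Omega^F} = \frac{\sum_{j=1}^i \lambda_j c_j^2}{\sum_{j=1}^i c_j^2} \leq \lambda_i,
\]
so $\inf_{V_i} \sup_{u\in V_i} R^F(u) \leq \lambda_i$.

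For the lower bound, let $V_i$ be an arbitrary $i$-dimensional subspace of $H^1(M)$ and let $W \subset L^2(M,\Omega^F)$ be the orthogonal complement of $\mathrm{span}(\phi_1,\ldots,\phi_{i-1})$. A dimension count gives a nonzero $u \in V_i \cap W$; since $V_i \subset H^1(M)$, this $u$ lies in the form domain of $\mathcal{A}$. Decomposing $u$ via the spectral resolution of $\mathcal{A}$, its spectral support is contained in $[\lambda_i,\infty)$, because $u$ is $L^2$-orthogonal to the lower eigenspaces and the essential spectrum starts strictly above $\lambda_i$. The spectral theorem then gives $E^F(u) \geq \lambda_i \int_M u^2\, \Omega^F$, i.e.\ $R^F(u) \geq \lambda_i$, so $\sup_{V_i} R^F \geq \lambda_i$.

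The main subtlety is the last step, where one needs to know that the portion of the spectrum of $\mathcal{A}$ lying strictly below $\inf \sigma_{\mathrm{ess}}$ consists exactly of isolated eigenvalues of finite multiplicity with eigenfunctions in the form domain $H^1(M)$, with no hidden continuous component. This is the standard dichotomy between the discrete and essential spectrum of a self-adjoint operator (Weyl's criterion); applied to the Friedrichs extension $\mathcal{A}$ on the complete manifold $(\C,\d)$, it guarantees that the spectral projection of $\mathcal{A}$ onto $[0,\lambda_i]$ is the orthogonal projection onto $\mathrm{span}(\phi_1,\ldots,\phi_i)$, which is precisely what legitimizes the spectral estimate used above.
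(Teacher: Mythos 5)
The paper states this Min--Max principle as a known result and does not prove it, so there is nothing to compare your argument against; what you have written is the standard textbook derivation via the spectral theorem applied to the Friedrichs extension (as in, e.g., Reed--Simon or Davies), and it is correct. One very minor imprecision: in the last paragraph the spectral projection onto $[0,\lambda_i]$ need not equal the projection onto $\mathrm{span}(\phi_1,\dots,\phi_i)$ when $\lambda_i=\lambda_{i+1}$; but your argument only uses that the range of the spectral projection onto $(-\infty,\lambda_i)$ is contained in $\mathrm{span}(\phi_1,\dots,\phi_{i-1})$, which is exactly what the hypothesis $\lambda_i<\inf\sigma_{\mathrm{ess}}$ guarantees, so the proof goes through as written.
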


\subsection{Cotangent point of view}

We finish this preliminaries with the cotangent point of view for Finsler metrics. This is fairly well-known and we refer to \cite{moi:these} for a more detailed presentation.

\subsubsection{Dual metric}

\begin{defin}
  Let $F$ be a Finsler metric on a manifold $M$. The \emph{dual Finsler (co)metric} ${F^{\ast} \colon T^{\ast}M \rightarrow \R}$ is defined, for $(x,p) \in T^{\ast}M$, by
\begin{equation*} 
F^{\ast}(x,p) = \sup \lbrace p(v) \mid v\in T_xM \; \text{\rm such that } F(x,v)=1 \rbrace.
\end{equation*}
\end{defin}

\subsubsection{Legendre transform}

The tool that allows us to switch from the tangent bundle to the cotangent bundle is the \emph{Legendre transform} associated with $F$.
\begin{defin}
 The \emph{Legendre transform} $\L_F : TM \rightarrow T^{\ast}M$ associated with $F$ is defined by $\L_F(x,0) = (x,0)$ and, for $(x,v) \in TM \smallsetminus \{0\}$ and $u \in T_xM$, 
\begin{equation*}
 \L_F ( x,v) (u) := \frac{1}{2} \left. \frac{d}{dt} F^2(x,v + tu)\right|_{t=0}.
\end{equation*}
\end{defin}
As $F^2$ is positively $2$-homogeneous, we have that $\L_F$ is positively $1$-homogeneous, that is,  $\L_F (x,\l v) (u) = \l  \L_F (x,v) (u) $ for $\l\geqslant 0$. So we can project $\L_F$ to the homogeneous bundle. Set $H^{\ast}M := T^{\ast}M \smallsetminus \{0\} / \R^+_{\ast}$ and write $\ell_F\colon HM \rightarrow H^{\ast}M$ for the projection. Considering directly $\ell_F$, instead of $\L_F$, can sometimes be quite helpful.

The Legendre transform $\L_F$ links the Finsler metric $F$ with its dual metric $F^{\ast}$
\[
 F = F^{\ast} \circ \L_F.
\]
So, in particular $\L_F$ maps the unit tangent bundle of $F$ to the unit cotangent bundle of $F^{\ast}$.

Moreover, the Legendre transform $\L_F$ is a diffeomorphism and the following diagram commutes (see for instance \cite{moi:these}):
\[
\xymatrix{
    & T^{\ast}M \smallsetminus \{0\} \ar[r]^{\hat{r}} \ar[ld]_{\hat{p}}  &   H^{\ast}M \ar[rd]^{\hat{\pi}} & \\
M   &       &       &   M  \\
    & TM \smallsetminus \{0\} \ar[uu]^{\L_F} \ar[r]_r  \ar[ul]^{p} & HM \ar[uu]_{\ell_F} \ar[ur]_{\pi} &  }
\]

For strongly convex smooth Finsler metrics, the Legendre transform can also be described using convex geometry. The Legendre transform associated with a convex $\mathcal{C} \subset \R^n$ sends a point $x$ of $\mathcal{C}$ to the hyperplane supporting $\mathcal{C}$ at $x$, or equivalently, to the linear map $p \in \left(\R^n\right)^{\ast}$ such that $p(x)=1$ and $\ker p$ is parallel to the supporting hyperplane.

\subsubsection{Continuity of the Legendre transform}

Let $V$ be a $n$-dimensional real vector space\footnote{In this section, we should think of a Finsler manifold $(M,F)$ with a fixed point $x\in M$. We look at the tangent space $T_xM$ as an $n$-dimensional real vector space, provided with a non-necessarily symmetric norm $F(x,\cdot)$.}, with a fixed Euclidean structure whose norm we denote by $F_0$ and see as a translation-invariant Finsler metric on $V$.\\
Let $\mathcal{N}$ denote the set of translation-invariant Finsler metrics on $V$. This is the same as looking at the set of non-necessarily symmetric norms on $V$, whose unit sphere is $C^2$ with positive definite Hessian.\\
The topology defined in section \ref{sec:topology} induces a topology on $\n$ which can be metrized  in the following easy way: Let $HV = V\smallsetminus\{0\}/\R^+ \simeq \S^{n-1}$ be the set of rays from the origin. If $F,F'\in\n$, the ratio $\frac{F}{F'}$ is a well defined function of $HV$: if $\xi\in HV$, we have $\frac{F}{F'}(\xi)=\frac{F(u)}{F'(u)}$, where $u$ is any vector of $V$ that projects to $\xi$. Define a metric on $\mathcal{N}$ by
$$d_{\n}(F,F')= \sup_{\xi\in HV} |\ln \frac{F}{F'}(\xi)|.$$
We define a metric $D$ on the set $\Homeo^0(V)$ of positively $1$-homogeneous homeomorphisms of $V$ by:
$$D(H,H') = \sup_{u\in V,\ F_0(u)=1} F_0(H(u)-H'(u)).$$
Identifying $HV$ with the unit Euclidean sphere $\S^{n-1}$, we define a metric $d$ on the set $\Homeo(HV)$ of homeomorphisms of $HV$:
$$d(h,h') = \sup_{\xi\in HV} d_{\S^{n-1}}(h(\xi),h'(\xi)).$$
This distance is just the maximal Euclidean angle between the images.\\

For each $F\in \mathcal{N}$, the Legendre transform $\L_F$ is a positively $1$-homogeneous homeomorphism of $V$ and its ``projection'' $\ell_F$ a $C^1$-diffeomorphism of $HV$. We thus have applications $\L:F \longmapsto \L_F$ from $\n$ to $\Homeo^0(V)$ and $\ell:F \longmapsto \ell_F$ from $\n$ to $\Homeo(HV)$. The following lemma is immediate if we use the geometrical interpretation of the Legendre transform that we recalled at the end of the previous section.

\begin{lem}\label{lem_continuity_l}
The application $\L$ is a continuous bijection from $(\n,d_{\n})$ to $(\Homeo^0(V),D)$. The application $\ell$  is continuous from $(\n,d_{\n})$ to $(\Homeo(HV),d)$ but is not injective: $\ell_F=\ell_{F'}$ if and only if $F = \lambda F'$ for some $\lambda>0$.
\end{lem}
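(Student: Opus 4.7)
My plan is to reduce everything to the geometric picture of the Legendre transform recalled just above: for $v\in V\setminus\{0\}$, writing $w=v/F(v)$, the covector $\L_F(v)=F(v)\,\L_F(w)$ is determined by $\L_F(w)(w)=1$ and $\ker\L_F(w)=T_w\{F=1\}$; equivalently, $\L_F(v)(v)=F^2(v)$ for every non-zero $v$. Injectivity of $\L$ on $\n$ is then immediate: $\L_F=\L_{F'}$ forces $F^2(v)=\L_F(v)(v)=\L_{F'}(v)(v)=(F')^2(v)$ for all $v$. That $\L_F$ is indeed a positively $1$-homogeneous homeomorphism of $V$, and $\ell_F$ a $C^1$-diffeomorphism of $HV$, follows from the strong convexity hypothesis, which turns the Gauss map from $\{F=1\}$ onto $\{F^{\ast}=1\}$ into a diffeomorphism.

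For the fibers of $\ell$, observe that $\ell_F$ retains exactly the tangent hyperplane $T_w\{F=1\}$ at each $w$ on the unit sphere. If $F'=\lambda F$ with $\lambda>0$, the bodies $\{F'\leqslant 1\}$ and $\{F\leqslant 1\}$ are homothetic from the origin, hence have parallel tangent hyperplanes along every ray, so $\ell_F=\ell_{F'}$. Conversely, assume $\ell_F=\ell_{F'}$. Then there is a positive function $\mu\colon V\setminus\{0\}\to\R^+$ with $\L_F=\mu\L_{F'}$; reading this as the gradient identity $dF^2=\mu\,d(F')^2$ and taking exterior derivatives yields $d\mu\wedge d(F')^2=0$, so $\mu$ depends on $v$ only through $F'(v)$. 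Since $\L_F$ and $\L_{F'}$ are both positively $1$-homogeneous, $\mu$ is $0$-homogeneous; combined with the $2$-homogeneity of $(F')^2$, this forces $\mu$ to be globally constant, and $F=\sqrt{\mu}\,F'$.

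Continuity of $\L$ and $\ell$ is then a statement about continuous dependence of supporting hyperplanes on the convex body. If $F_n\to F$ in $d_\n$, the unit balls $\{F_n\leqslant 1\}$ converge in Hausdorff distance to $\{F\leqslant 1\}$; since the limit body is $C^1$ and strictly convex (as $F\in\n$), the outer normal at each of its boundary points is unique, and the usual compactness argument yields convergence of the normal at a perturbed boundary point to the normal at the limit point. Combined with radial convergence of boundary points, this gives $\L_{F_n}\to\L_F$ uniformly on the Euclidean unit sphere, hence in $D$ by homogeneity, and $\ell_{F_n}\to\ell_F$ in $d$ by projection. The only step that is not simply a matter of reading off the geometric picture is the converse in the previous paragraph: the exterior derivative computation is what allows us to upgrade pointwise proportionality of $\L_F$ and $\L_{F'}$ to a global homothety, and I expect this to be the one place where a naive "both unit spheres look the same radially" argument would fall short.
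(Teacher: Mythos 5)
Your proof is correct, and it in fact covers more ground than the paper's own proof does. The paper only spells out the continuity of $\ell$ at a Euclidean $F_0$, treating the injectivity claims as ``immediate from the geometric interpretation.'' For continuity, the paper's route is quantitative: if $d_{\n}(F_0,F)\leqslant \ln C$, the unit sphere of $F$ is pinched between the Euclidean spheres of radii $C^{-1}$ and $C$, and a short trigonometric argument gives the explicit bound $d(\ell_F^{-1}\circ\ell_{F_0},\id)\leqslant\arccos C^{-2}$. Your argument is the soft version of the same geometric fact: Hausdorff convergence of the unit balls, plus uniqueness of the outer normal at each point of a $C^1$ strictly convex body, plus a compactness argument to make the convergence of normals uniform. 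You lose the explicit modulus of continuity (which the paper never uses) but gain robustness and make the underlying convex-geometry principle transparent. For the fibers of $\ell$, your exterior-calculus step $dF^2=\mu\,d(F')^2\Rightarrow d\mu\wedge d(F')^2=0$, combined with $0$-homogeneity of $\mu$, is a clean way to promote pointwise proportionality of the two Legendre transforms to a global homothety, and you are right that this is the one step where the naive radial picture is insufficient; the paper gives no argument at all here. One minor caveat: you prove only injectivity of $\L$, while the lemma asserts a continuous \emph{bijection} onto $\Homeo^0(V)$. Surjectivity onto all positively $1$-homogeneous homeomorphisms is actually false (every $\L_F$ is the gradient of the $2$-homogeneous convex function $\tfrac12 F^2$, and not every such homeomorphism is a gradient), so the paper's ``bijection'' should be read as ``bijection onto its image''; your treatment is the defensible one.
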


\begin{proof}
Let us explicit the continuity of $\ell$ at $F_0$ because this is all we need in this article; the continuity elsewhere follows the exact same lines.\\
Let $F\in\mathcal{N}$ such that $d_{\mathcal{N}}(F_0,F) \leqslant \ln C$ for some $C>1$. We can see that $d(\ell_{F}^{-1}\circ \ell_{F_0}, \id) \leqslant \arccos C^{-2}$.\\
Indeed, as $d_{\mathcal{N}}(F_0,F) \leqslant \ln C$, the unit sphere $S_F(1)$ for $F$ in $V$ is in between the spheres of radius $C^{-1}$ and $C$ for $F_0$, that we denote by $S_0(C^{-1})$ and $S_0(C)$. Let $\xi \in HV$. The map $\ell_{F}^{-1}\circ \ell_{F_0}$ sends $\xi$ to a point $\xi'$, such that the tangent space of $S_F(1)$ at $\xi'$ is parallel to the tangent space of $S_0(C)$ at $\xi$. Figure \ref{fig_the_control_legendre_transform} and simple trigonometry then yield the result.
\begin{figure}[h]
\begin{center}
\begin{pspicture}(0,-2.7)(6.62,3)
\pscircle[linewidth=0.04,dimen=outer](2.3,-0.4){2.3}
\pscircle[linewidth=0.04,dimen=outer](2.3,-0.4){1.7}
\psline[linewidth=0.04cm](1.2,2.3)(5.1,-0.2)
\psline[linewidth=0.04cm](2.4,-0.5)(3.8,2.0)
\psline[linewidth=0.04cm,linestyle=dashed,dash=0.16cm 0.16cm](2.4,-0.5)(5.2,0.395)
\psline[linewidth=0.04cm,linestyle=dashed,dash=0.16cm 0.16cm](2.4,-0.5)(1.8,2.5)
\psbezier[linewidth=0.04,linecolor=red](3.6559284,-2.02)(3.3135412,-2.22)(2.548205,-2.48)(1.8030093,-2.38)(1.0578135,-2.28)(0.68,-1.82)(0.49388167,-1.02)(0.30776334,-0.22)(0.789305,1.2040415)(1.7224476,1.58)(2.6555903,1.9559585)(3.2101226,1.3714887)(3.6156476,0.78)(4.0211725,0.1885113)(4.239899,-0.4818077)(4.21986,-0.92)(4.1998215,-1.3581923)(3.9983156,-1.82)(3.6559284,-2.02)
\psline[linewidth=0.04cm,linecolor=red](4.44,0.56)(1.46,2.46)
\psline[linewidth=0.04cm,linecolor=red](4.44,0.56)(1.46,2.46)
\psline[linewidth=0.04cm,linecolor=red,linestyle=dashed,dash=0.16cm 0.16cm](2.4,-0.5)(3.22,2.7)
\uput{3pt}[45](3.8,2.0){$\xi$}
\rput(2.6,-1.6){$S_0( C^{-1})$}
\put(4,-2.3){$S_0( C)$}
\put(2.7,2.9){\color{red} $\ell_{F}^{-1} \circ \ell_{F_0} \left( \xi \right)$}
\put(1.8,-2.4){\color{red} $S_F(1)$}
\end{pspicture} 
\quad
\begin{pspicture}(1,-1.6)(7,2.4)
\psarc[linewidth=0.04,linestyle=dashed,dimen=outer](2.4,-0.5){1.7}{-15}{100}
\psarc[linewidth=0.04,linestyle=dashed,dimen=outer](2.4,-0.5){2.3}{-15}{100}
\psline[linewidth=0.04cm](1.2,2.3)(5.1,-0.2)
\psline[linewidth=0.04cm](2.4,-0.5)(3.8,2.0)
\psline[linewidth=0.04cm](2.4,-0.5)(5.2,0.25)
\psarc[linewidth=0.03cm]{->}(2.4,-0.5){0.6}{17}{59}
\rput(2,0.9){$S_0(C^{-1})$}
\rput(4.7,-1.3){$S_0(C)$}
\end{pspicture} 
\end{center}
\caption{Maximum angle between $\ell_{F}^{-1} \circ \ell_{F_0} \left( \xi \right)$ and $\xi$}
 \label{fig_the_control_legendre_transform}
\end{figure}
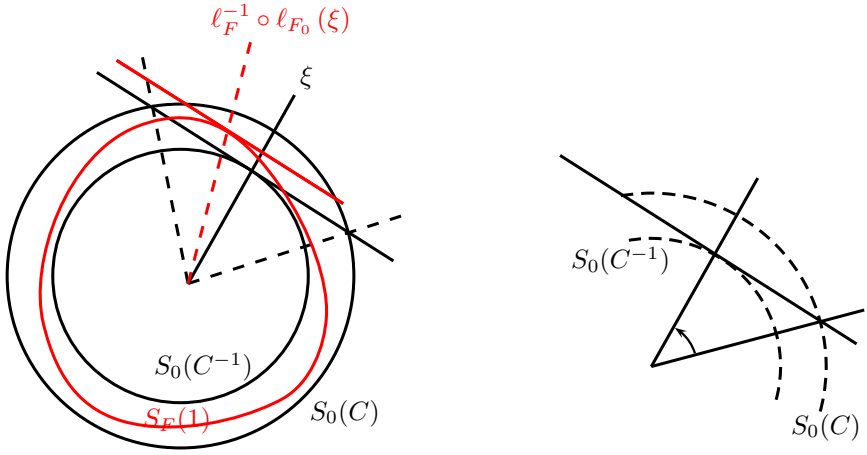
\end{proof}

\section{Behavior at infinity of Regular Hilbert geometries}\label{sec_behavior_at_infinity_regular_hilbert}

In all the following, $(\C, d_{\C})$ will be a regular Hilbert geometry. We will see here that $(\C, d_{\C})$ is asymptotically Riemannian, that is, the space looks more and more like a Riemannian space outside big compact sets:

\begin{defin} \label{def_asympt_riemannian}
 A Finsler metric $F$ on a manifold $M$ is called \emph{asymptotically Riemannian} if, for any $C>1$, there exists a compact set $K$ such that, for all $x \in M \smallsetminus K$, there exists a scalar product $g_x$ on $T_xM$ satisfying, for every non-zero vector $v\in T_xM$,
\begin{equation*}
 C^{-1} \leqslant \frac{F(x,v)}{\sqrt{g_x(v,v)}} \leqslant C
\end{equation*}
\end{defin}

\begin{rem}
Note that, for this definition to be of any interest, $M$ should be non-compact.
\end{rem}

\subsection{Hessian of a codimension-$1$ submanifold of the projective space}\label{sec_hessian}

Consider a codimension-$1$ $C^2$ submanifold $N$ of the projective space $\RP^n$ (for instance the boundary $\dC$ of a convex set $\C$), and pick a point $x\in N$. Choose an affine chart containing $x$ and a Euclidean metric on it. Let $n$ be a unit normal vector to $N$ at $x$ for this metric, that is a unit vector orthogonal to $T_xN$. Now, around $x$, we consider $N$ as the graph of the function, defined on some neighborhood $U$ of $x$ in $T_xN$:
$$G_x \colon u\in U \longmapsto G_x(u)\in\R,$$
such that a neighborhood of $x$ in $N$ is the submanifold $\{u+G(u).n,\ u\in U\}$. The Hessian of $G_x$ at $x$ is a bilinear form on the tangent space $T_xN$. If one chooses an orthonormal basis $(u_1,\cdots,u_{n-1})$ of $T_xN$, then the matrix of this bilinear form is the $(n-1)\times(n-1)$ matrix $\left(\frac{\partial^2 G}{\partial u_i\partial u_j}\right)$ of the second-derivatives of $G$.\\

The definition of the Hessian obviously depends on the choice of the affine chart and of the Euclidean metric. Nevertheless, there are two basic observations which we will use all along this section.

\begin{itemize}
 \item The property of the Hessian of $N$ at $x$ to be positive, negative or definite, is independent of the choice of the affine chart and the Euclidean metric. Hence, for example, it is possible to talk about a convex subset of $\RP^n$ whose boundary is $C^2$ with positive definite Hessian.
 \item Let $N'$ be another codimension-$1$ $C^2$ submanifold of $\RP^n$, which is tangent to $N$ at $x$. It makes sense to say that $N$ and $N'$ have the same Hessian at $x$. Indeed, choose an affine chart containing $x$, a Euclidean metric on it, a unit vector $n$ normal to $N$ at $x$ and an orthonormal basis $(u_1,\cdots,u_{n-1})$ of $T_xN=T_xN'$. Call $H_x$ and $H_x'$ the Hessians of $N$ and $N'$ at $x$. The fact that they are the same bilinear form on $T_xN$ does not depend on any of the previous choices. 
\end{itemize}

\subsection{Busemann functions, horospheres and horoballs}

The \emph{Busemann function} based at $x\in\dC$ is defined by
$$b_{x}(z,y) = \lim_{p\to x} d_{\C}(z,p) - d_{\C}(y,p),$$
which, in some sense, measures the (signed) distance from $z$ to $y$ in $\C$ as seen from the point $x\in\dC$. A particular expression for $b$ is given by
$$b_{x}(z,y)=\lim_{t\to +\infty} d_{\C}(z,\gamma(t))-t,$$
where $\gamma$ is the geodesic leaving $y$ at $t=0$ to $x$. When $x$ is fixed, then $b_{x}$ is a surjective map from $\C\times\C$ onto $\R$. When $z$ and $y$ are fixed, then $b_.(z,y): \dC \rightarrow \R$ is bounded by a constant $C=C(z,y)$.\\

The \emph{horosphere} passing through $z\in \C$ and based at $x\in\dC$ is the set
$$\mathcal{H}_{x}(z)=\{y\in\C,\ b_{x}(z,y)=0\}.$$
$\mathcal{H}_{x}(z)$ is also the limit when $p$ tends to $x$ of the metric spheres $S(p,d_{\C}(p,z))$ about $p$ passing through $z$. In some sense, the points on $\mathcal{H}_{x}(z)$ are those which are as far from $x$ as $z$ is.\\
The (open) horoball $H_{x}(z)$ defined by $z\in \C$ and based at $x\in\dC$ is the ``interior'' of the horosphere $\mathcal{H}_{x}(z)$, that is, the set $$H_{x}(z)=\{y\in\C,\ b_{x}(z,y)> 0\}.$$
For example, if $\E$ is an ellipsoid, then the horoballs of $(\E, d_{\E})$ are also ellipsoids. We explain this fact in the proof of the following lemma. This proof will introduce the main construction which helps us in understanding the asymptotic behavior of Hilbert geometries.

\begin{lem}\label{busemann}
Let $(\C,\d)$ be a regular Hilbert geometry. 
\begin{itemize}
 \item For any $x \in \dC$, the Busemann function $b_{x} \colon \C\times\C \rightarrow \R$ is a $C^2$ function. 
\item Let $x\in\dC$, $z\in \C$. The set $H_x(z)\cup \{x\}$ is a $C^2$ submanifold of $\RP^n$, whose Hessian at $x$ is the same as the Hessian of $\dC$.
\end{itemize}
\end{lem}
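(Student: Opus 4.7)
The plan is to combine an explicit cross-ratio formula for the Busemann function with comparison to the ``model'' provided by the osculating ellipsoid of $\dC$ at $x$. First I would fix an affine chart where $\C$ is relatively compact, $x=0$, the tangent hyperplane $T_{x}\dC$ is $\{y_{n}=0\}$, and $\dC$ near $x$ is the graph $y_{n}=\tfrac{1}{2}\langle H_{x}y',y'\rangle+o(|y'|^{2})$ with $H_{x}$ positive definite. The key object is the \emph{osculating ellipsoid} $\E$ at $x$: an ellipsoid tangent to $\dC$ at $x$ with the same tangent plane and the same Hessian $H_{x}$. Since the Hilbert metric of $\E$ is hyperbolic, its Busemann functions and horoballs are computable in closed form; in particular the horoballs at $x$ form a one-parameter family of ellipsoids, each tangent to $\dE$ at $x$ with Hessian $H_{x}$ there. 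A transparent way to see this is to apply a projective transformation sending $T_{x}\dE$ to the hyperplane at infinity: in that chart $\E$ becomes a paraboloid and its horoballs are precisely the translates of this paraboloid along the direction singling out $x$.

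For general $\C$, I would write $d_{\C}(w,p)=\tfrac{1}{2}\bigl|\ln[a,b,w,p]\bigr|$ and take $p\to x$. One chord endpoint $b=b(w,p)$ tends to $x$, producing a divergence that behaves like $-\tfrac{1}{2}\ln|bp|$, while the other tends to the second intersection $a_{w}^{\infty}\in\dC$ of the line $wx$ with $\dC$, which is $C^{2}$ in $w$. The divergent terms cancel in $d_{\C}(z,p)-d_{\C}(y,p)$ provided one controls the ratio $|b(z,p)p|/|b(y,p)p|$ as $p\to x$. Using the $C^{2}$ expansion of $\dC$ near $x$ together with the positivity of $H_{x}$, this ratio converges to an explicit $C^{2}$ function of $z$ and $y$ depending only on $H_{x}$ (and the relative positions of $z,y,x$), the higher-order remainder of the boundary expansion being asymptotically negligible. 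An equivalent way to obtain the same conclusion is to sandwich $\C$ between two osculating ellipsoids with Hessians $(1\pm\varepsilon)H_{x}$ in a neighborhood of $x$, apply monotonicity of the Hilbert cross-ratio and let $\varepsilon\to 0$: this squeezes $b_{x}^{\C}$ between $b_{x}^{\E^{\pm}}$ and reduces the general case to the model computation.

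Either approach yields an explicit $C^{2}$ formula for $b_{x}(z,y)$, proving the first bullet. Moreover it shows $b_{x}^{\C}(\cdot,z)$ matches $b_{x}^{\E}(\cdot,z)$ to second order near $x$, so the level set $\mathcal{H}_{x}(z)\cup\{x\}$ is a $C^{2}$ hypersurface of $\RP^{n}$ tangent to $\dC$ at $x$ and with Hessian equal to $H_{x}$, i.e., the same as $\dC$. The main obstacle I expect is the careful control of the pinch-off rates $|b(w,p)p|$: one must check that only the Hessian $H_{x}$ enters the limiting ratio and not any higher-order data of $\dC$. The sandwich strategy sidesteps this issue but requires in turn a careful construction of the ellipsoids $\E^{\pm}$ genuinely sandwiching $\C$ in a neighborhood of $x$, so that the cross-ratio comparison is uniform enough to survive the passage to the Busemann limit.
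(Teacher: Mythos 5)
Your treatment of the second bullet is essentially the paper's: both you and the paper sandwich $\C$ near $x$ between osculating ellipsoids whose Hessians at $x$ are $(1\pm\varepsilon)$-multiples of $H_x$, observe that horoballs of ellipsoids have the same Hessian at the base point as the ellipsoid itself, and pass to the limit $\varepsilon\to 0$. Your paraboloid picture (send $T_x\partial\E$ to the hyperplane at infinity so horoballs become translates) is a slicker route to the model fact than the paper's explicit parabolic-orbit computation in dimension $2$, but the content is the same.

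For the first bullet, however, there is a gap. The sandwich argument only controls $b_x^{\C}$ \emph{near} $x$: the comparison ellipsoids $\E^\pm$ are contained in, resp.\ contain, $\C$ only in a neighborhood $U$ of $x$, so you obtain that $b_x^{\C}$ matches $b_x^{\E}$ to second order at $x$ (which is exactly what the Hessian bullet asks), but this gives no information about the $C^2$ regularity of $b_x^{\C}$ on all of $\C\times\C$; in particular it certainly does not yield an ``explicit $C^2$ formula for $b_x(z,y)$'' away from $x$. Your other route, taking the limit of $d_\C(z,p)-d_\C(y,p)$ directly, could in principle work --- indeed it is how one would derive a closed formula --- but as sketched it is not correct: the limiting ratio $|b(z,p)p|/|b(y,p)p|$ does not depend only on $H_x$, it depends on the tangent hyperplane $T_x\dC$ and on the second intersection points $z'=(xz)\cap\dC$, $y'=(xy)\cap\dC$. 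The paper sidesteps these estimates entirely by quoting Benoist's exact cross-ratio formula
\[
b_x(z,y)=\tfrac12\ln\bigl[(px):(pz'):(pz):(py)\bigr],
\]
where $p=(y'z')\cap T_x\dC$; since $z'$, $y'$, $T_x\dC$ are as regular as $\dC$, the $C^2$ regularity of $b_x$ is immediate. To make your argument complete you would need to either carry out this limit computation carefully enough to recover a formula of this type (tracking the dependence on $T_x\dC$ and $z',y'$, not just $H_x$), or simply invoke Benoist's formula as the paper does.
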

\begin{proof}
The first point follows from the following description of the Busemann function $b_{x}(z,y)$, given by Benoist in \cite{Benoist:convexes_div_I}. Let $z'$ and $y'$ be the intersection points of the lines $(xz)$ and $(xy)$ with $\dC$, which are distinct from $x$. Let $p$ be the intersection point of $(y'z')$ with $T_x\dC$. Then 
$$b_{x}(z,y) = \frac{1}{2} \ln [(px):(pz'):(pz):(py)],$$
where $[(px):(pz'):(pz):(py)]$ denotes the cross-ratio of the four lines $(px), (pz'), (pz), (py)$. All these constructions involve only the boundary of $\C$, so the Busemann function has the same regularity as $\dC$. This first point implies that horospheres are $C^2$ submanifolds of $\C$.\\

To prove the second point, we first consider the case of an ellipsoid $\E$. The Hilbert geometry $(\E,d_{\E})$ is a model of the Riemannian hyperbolic space. We will exploit the fact that, for any $x\in\partial\E$, $\partial\E$ or any horosphere based at $x$ is an orbit of a maximal parabolic group of isometries fixing $x$. We have to prove that the Hessians are the same in all the directions, so we can assume the dimension is $2$.\\
Let then $\E$ be an ellipsoid in $\RP^2$ and pick $x\in\partial\E$. We can choose a projective basis $(e_1,e_2,e_3)$ such that $e_1=x$, $e_2\in T_x\partial\E$ and the maximal parabolic group of isometries fixing $x$ is given by $\mathcal{P} = \{g_t\in \mathrm{SL}(3,\R),\ t\in\R\}$ with
$$\left(\begin{array}{ccc} 
1 & t & \frac{t(t-1)}{2}\\
0 & 1 & t \\
0 & 0 & 1 
\end{array}\right).$$
The boundary $\partial\E$, as well as any horosphere $\mathcal{H}$ based at $x$ is the $\mathcal{P}$-orbit of the point $z=e_1+se_3$ for some $s\in \R\smallsetminus\{0\}$, that is, an ellipse parametrized by
$$t \longmapsto [1+s\frac{t(t-1)}{2}:st:s].$$
In the affine chart given by the intersection with the plane $\{(x_1,x_2,x_3)\in\R^3,\ x_1=1\}$, with origin $x$ and the induced Euclidean metric of $\R^3$, this is the curve
$$t \longmapsto \left(\frac{t}{\frac{1}{s}+\frac{t(t-1)}{2}},\frac{1}{\frac{1}{s}+\frac{t(t-1)}{2}} \right).$$
By making the transformation $t \longmapsto 1/t$, this becomes the curve
$$c \colon t \longmapsto \left(\frac{t}{\frac{t^2}{s}+\frac{1-t}{2}},\frac{t^2}{\frac{t^2}{s}+\frac{1-t}{2}} \right),$$
such that $c(0)=x$. But for $t$ around $0$, we have up to order $2$:
$$c(t) \sim (2t(1+t),2t^2).$$
This implies that the curvature of the curve $c$ at $0$ is independent of $s$, and hence, that, for ellipsoids, the Hessian of the horospheres are all the same at $x$.\\

Now, let $(\C,\d)$ be a regular Hilbert geometry. Pick $x\in\dC$, and $z\in\C$. Fix an affine chart centered at $x$, containing $\overline\C$, and fix a Euclidean metric $|\cdot|$ on it such that $|zx|=1$ and the Hessian $B_x$ of $\dC$ at $x$ is the restriction of the Euclidean scalar product to $T_x\dC$.\\
Fix $C>1$. Consider the Euclidean spheres $S_x^+$ and $S_x^-$, whose boundaries are tangent to $\dC$ at $x$ and Hessians $B_x^+$ and $B^-_x$ at the point $x$, seen as elements of $\mathrm{GL}(n-1,\R)$, satisfy $B_x^- = C B_x$ and $B_x^+ = C^{-1} B_x$ (this does not depend on the Euclidean metric we use to compute them). For $C$ close enough to $1$, the balls $\E_x^-$ and $\E_x^+$ they define contain the point $z$.\\
Let $\mathtt{h}_x^-$ and $ \mathtt{h}^+_x$ be the hyperbolic metrics defined by the balls $\E_x^-$ and $\E_x^+$. There is some neighborhood $U$ of $x$ in $\R^n$, depending on $C$, such that, on $ U\cap \E^-_x$, we have
$$\mathtt{h}^+_x\leqslant F_{\C} \leqslant \mathtt{h}^-_x.$$
Denote by $H^-_x(z)$, $H^+_x(z)$ and $H_x(z)$ the horoballs based at $x$ passing through $z$ for the Hilbert geometries defined by $\E_x^-$, $\E_x^+$ and $\C$ respectively. The previous inequality implies that
$$H^-_x(z)\cap U\subset H_x(z)\cap U\subset H^+_x(z)\cap U.$$
Now, by the result for ellipsoids, the Hessians $B_x^+(z)$ and $B^-_x(z)$ at the point $x$ of the boundaries of $H^+_x(z)$ and $H^-_x(z)$ also satisfy $B_x^-(z) = C B_x$ and $B_x^+(z) = C^{-1} B_x$. This means the horospheres $\mathcal{H}^-_x(z)$ and $\mathcal{H}^+_x(z)$ are ``almost'' osculating for $\mathcal{H}_x(z)$. Since $C>1$ is arbitrary, we see that the horosphere $\mathcal{H}_x(z)$ and $\dC$ have the same Hessians at $x$.
\end{proof}

\subsection{Hilbert geometries are asymptotically Riemannian}

\begin{prop}\label{prop_hilbert_are_bilipschitz_at_infinity}
Let $(\C,\d)$ be a regular Hilbert geometry, fix a point $o\in\C$ and a constant $C>1$. To each $x\in\dC$, we can associate a (non-complete) Riemannian hyperbolic metric $\h_x$ on $\C$ such that
\begin{enumerate}
 \item the application $x\longmapsto \h_x$ is continuous;
 \item the metric $\mathtt{h}_x$ has the same geodesics as $F_{\C}$ on $\C$;
 \item there is $R=R(C) \geqslant 0$ such that, for any $x\in\dC$ and $z\in [ox)\smallsetminus B(o,R)$,
$$C^{-1} \leqslant  \frac{F_{\C}(z, \cdot)}{\mathtt{h}_x(z, \cdot)} \leqslant C.$$
\end{enumerate}
\end{prop}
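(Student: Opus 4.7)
The plan is to assign to each boundary point $x\in\dC$ an osculating ellipsoid $\E_x$ containing $\overline{\C}$ and take $\h_x$ to be the Klein--Beltrami (Hilbert) metric of $\E_x$, restricted to $\C$. Since the Hilbert metric of an ellipsoid is a Riemannian metric of constant sectional curvature $-1$ whose geodesics are affine line segments, $\h_x$ is automatically hyperbolic and shares its geodesics with $F_{\C}$, so property (2) is immediate. The metric is non-complete on $\C$ because $\dC\smallsetminus\{x\}$ lies in the interior of $\E_x$ and is therefore reached in finite $\h_x$-distance.

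For property (1), I would first fix an affine chart with a Euclidean structure containing $\overline{\C}$. At each $x\in\dC$, the tangent hyperplane $T_x\dC$ and the positive-definite Hessian $B_x$ depend continuously on $x$. The ellipsoids tangent to $\dC$ at $x$ with matching tangent plane and Hessian form a one-parameter family, indexed say by the length of the semi-axis along the inward normal. Compactness of $\dC$ lets me choose this parameter uniformly large so that $\E_x\supseteq\overline{\C}$ for every $x\in\dC$ while keeping $x\mapsto\E_x$ continuous; continuity of $x\mapsto\h_x$ then follows from continuity of the Hilbert metric with respect to the convex body.

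The heart of the proof is property (3). For $z\in\C$ and $v\in T_z\C$, both $F_{\C}(z,v)$ and $\h_x(z,v)$ are, by the Hilbert formula, half the sum of the reciprocal Euclidean distances from $z$ to the two intersections of the line $z+\R v$ with $\dC$ and $\dE_x$ respectively. As $z$ moves along $[ox)$ towards $x$, the key observation, already exploited in the proof of Lemma~\ref{busemann}, is that the contributions from intersections accumulating at $x$ agree to leading order, both being governed by the common Hessian $B_x$, while contributions from intersections that remain at positive Euclidean distance from $x$ are themselves uniformly bounded. A careful bookkeeping along the unit sphere of directions at $z$, separating the two regimes, should yield that $F_{\C}(z,v)/\h_x(z,v)\to 1$ uniformly in $v$ as $z\to x$ along $[ox)$.

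The hard part will be making this convergence uniform in $x\in\dC$ as well, so that a single $R=R(C)$ works. I would use compactness of $\dC$ combined with the continuous dependence on $x$ of all the ingredients to reduce to finitely many boundary points and take $R$ to be the maximum of the pointwise radii thus obtained. A subtler technical point, which I expect to be the main obstacle, is handling transverse directions $v$ for which no intersection of $z+\R v$ with $\dC$ is close to $x$: here the reciprocal distances remain bounded, but so do both norms, and one must still control the ratio using the global comparison between $\C$ and $\E_x$ away from $x$, exploiting that the inclusion $\overline{\C}\subset\E_x$ keeps the two Hilbert metrics comparable in bounded regions.
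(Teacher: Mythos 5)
Your overall strategy is the right one -- associate to each $x\in\dC$ an ellipsoid with second-order contact at $x$ and compare Hilbert metrics along the ray $[ox)$, exploiting the shared Hessian -- but there is a genuine obstruction precisely where you place your trust in an osculating ellipsoid that \emph{contains} $\overline{\C}$.

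The problem: an ellipsoid tangent to $\dC$ at $x$ with the \emph{same} Hessian $B_x$ need not contain $\overline\C$, no matter how you dilate it along the inward normal. Work in an adapted chart where $x=0$, $T_x\dC$ is horizontal, $\C\subset\{s>0\}$, and $\dC$ is the graph $s=g(t)$ with $g(0)=g'(0)=0$, $D^2g(0)=B_x$. Any ellipsoid with matching tangent and Hessian at $x$ has boundary $s=s_\E(t)=\tfrac12\langle B_x t,t\rangle + O(|t|^4)$ with $\emph{nonnegative}$ fourth-order term; stretching it along the normal sends this to the osculating paraboloid $s=\tfrac12\langle B_x t,t\rangle$ but cannot push it below. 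Yet a $C^2$ strictly convex $\dC$ may well satisfy $g(t)<\tfrac12\langle B_x t,t\rangle$ for small $t\ne0$ (e.g.\ locally $g(t)=\tfrac12 t^2-t^4$), in which case $\C$ sticks outside every such ellipsoid arbitrarily close to $x$. So the family $x\mapsto\E_x$ you propose need not exist, and property (3) cannot get off the ground.

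What the paper does instead -- and which is the genuine content you should compare against -- is to sacrifice exact second-order contact. It takes two spheres $S_x^\pm$ of radii $1\pm\varepsilon$ in a normalized metric (hence Hessians $B_x/(1\pm\varepsilon)$, strictly smaller and strictly larger than $B_x$), and from them builds ellipsoids $\E_x^-\subset\C\subset\E_x^+$ via horoball constructions; the slack $\varepsilon$ is exactly what guarantees the containments. It then sets $\h_x:=\h_x^+$ and never analyzes $F_\C$ along the ray at all. Instead, it controls the ratio of the \emph{two explicit ellipsoid metrics} $\h_x^-/\h_x^+$ along $[ox)$ by a direct formula in a parabola-adapted chart, getting a factor $\sqrt{(1+\varepsilon)/(1-\varepsilon)}+\varepsilon$, and sandwiches $F_\C$ in between. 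This sidesteps the second weak point of your sketch: the ``careful bookkeeping'' you defer is precisely the hard step, and controlling $F_\C$ directly against a single ellipsoid metric requires quantitative $C^2$ information about $\dC$ near $x$ that is unavailable and nonuniform in $x$; the sandwich reduces everything to an explicit computation between two quadrics. You should incorporate a two-sided construction with strict Hessian slack of this kind; as written, your argument has a gap at the very first step.
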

\begin{proof}
We  more or less repeat the construction used in Lemma \ref{busemann}. By choosing an adapted affine chart, we look at $\C$ as a relatively compact subset of a Euclidean space $\R^n$, with norm $|\cdot|$.\\
Let $x\in \dC$. The Hessian of $\dC$ at $x$, computed with respect to the metric $|\cdot|$, gives a positive definite bilinear form $B_x$ on $T_x\dC$, and the map $x\longmapsto B_x$ is continuous. Define a new Euclidean norm $|\cdot|_x$ on $\R^n$ by setting: 
\begin{itemize}
 \item the vector $ox$ has norm $1$: $|ox|_x=1$;
 \item the restriction of the corresponding scalar product to $T_x\dC$ is $B_x$;
 \item $T_x\dC$ and $ox$ are orthogonal. 
\end{itemize}
The map $x\longmapsto |\cdot|_x$ is continuous. The sphere $S_x$ of radius $1$ for the norm $|\cdot|_x$, with center $o$, is tangent to $\C$ at $x$; in fact, it is an osculating sphere.\\

Let $\varepsilon>0$, and consider the spheres $S_x^+$ and $S_x^-$ of respective radius $1+\varepsilon$ and $1-\varepsilon$ for $|\cdot|_x$, whose boundaries are tangent to $\dC$ at $x$. Their centers are on the line $(ox)$. Their Hessians $B_x^+$ and $B^-_x$, seen as elements of $\mathrm{GL}(n-1,\R)$, at the point $x$ satisfy $B_x^+ = \frac{1-\varepsilon}{1+\varepsilon} B_x^-$ (and this does not depend on the Euclidean metric we use to compute them).\\
Now, let $\E_x^+$ be the smallest ellipsoid which contains $\C$, has $x$ in its boundary, and such that $S_x^+$ is a horosphere at $x$ of the hyperbolic geometry defined by $\E_x^+$. In other words, it is the smallest ellipsoid which contains $\C$, is tangent to $\dC$ at $x$, has its center on $(ox)$ and the Hessian of its boundary at $x$ is the same as the Hessian of $S_x^+$. Such an ellipsoid exists in the projective space because locally around $x$, $S_x^+$ contains $\C$. However, it might not be an ellipsoid in the affine chart, but could for instance be a paraboloid or a hyperboloid.\\
In the same way, let $\E_x^-$ be the largest horosphere at $x$ of the Hilbert geometry defined by $S_x^-$ which is contained in $\C$. We also have that the Hessian of the boundary of $\E_x^-$ at $x$ is the same as the Hessian of $S_x^-$.\\
Let $\mathtt{h}_x^-$ and $ \mathtt{h}^+_x$ be the hyperbolic metrics defined by the ellipsoids $\E_x^-$ and $\E_x^+$. By definition, we have that, on $\E^-_x$,
$$\mathtt{h}^+_x\leqslant F_{\C} \leqslant \mathtt{h}^-_x.$$

We will prove that, for $\varepsilon$ small enough, the application $x\longmapsto \h_x^+$ satisfies the desired properties. The property (2) is obvious. To prove (1), we show the following

\begin{lem}\label{lem:continuity}
The maps $x\longmapsto \E^{\pm}_x$ are continuous.
\end{lem}
\begin{proof}
We show the continuity of $x\longmapsto\E_x^-$ at a given point $x_0\in\dC$, the same works for $x\longmapsto\E_x^+$. Choose a point $p$ in $\E_{x_0}^-$ and let $r\in\R$ such that $\E_{x_0}^-$ is the horoball
$$\E_{x_0}^-=\{z\in\C,\ b_{x_0}(o,z)>r\}$$
in the hyperbolic geometry defined by $S^+_{x_0}$. For any $\delta\in\R$, let $\E_x^-(\delta)$ be the (open) horoball
$$\E_x^-(\delta)=\{z\in\C,\ b_x(o,z)>r+\delta\}$$
in the hyperbolic geometry defined by $S_x^+$. For any $\delta >0$, the maps $x\longmapsto \E_x^-(\delta)$ are continuous, because of the continuity of the Busemann functions. Fix $\delta>0$. The horoball $\E_{x_0}^-(\delta)$ is entirely contained in $\C$ while the horoball $\E_{x_0}^-(-\delta)$ has a nonempty intersection with $\RP^n\smallsetminus\C$. By continuity of $x\longmapsto \E_x^-(\delta)$, the same is true for $\E_x^-(\delta)$ and $\E_x^-(-\delta)$ for $x$ in some neighborhood of $x_0$. By definition of $\E_x^-$, this implies that $\E_x^-(\delta) \subset \E_x^- \subset \E_x^-(-\delta)$ in this neighborhood, hence the continuity of $x\longmapsto\E_x^-$ at $x_0$.
\end{proof}

To prove the third point, we consider, for $x\in\dC$ and $u\in\R^n\smallsetminus\{0\}$, the function 
$$f_{x,u}: r \longmapsto \frac{\h^-_x(o_r,u)}{\h^+_x(o_r,u)},$$
where $o_r$ is the point of $[ox)$ such that $\d(o_r,o)=r$. The function $f_{x,u}$ is defined as soon as $r$ is big enough for $o_r$ to be in $\E_x^-$. Remark that $f_{x,u}=f_{x,\l u}$ for all $u\in\R^n\smallsetminus\{0\},\ \l\not= 0$.

\begin{lem}
For $u\in \R.ox\smallsetminus\{0\}$, the function $f_{x,u}$ is decreasing and tends to $1$. For $u\in T_x\dC\smallsetminus\{0\}$, the function $f_{x,u}$ is decreasing and tends to $\sqrt{\frac{1+\varepsilon}{1-\varepsilon}}$.
\end{lem}
\begin{proof}
We can choose another affine chart, with coordinates $(t_1, \dots , t_{n-1}, s)$, so that the boundary of $\E_x^+$ is the parabola $s=|t|^2$, where $|t|^2 = t_1^2 + \dots +t_{n-1}^2$. In that chart, the boundary of $\E_x^-$ has to be an ellipsoid inside of $\E_x^+$, and the line $(ox)$, which is an axis of symmetry for both $\E_x^+$ and $\E_x^-$, is sent to the $y$-axis. The equation of $\E_x^-$ is then given by 
$$\frac{s^2}{a^2}-\frac{2s}{a}+ \frac{|t|^2}{b^2}=0,$$
for some $a,b>0$.

Let $s=s(r)=|o_rx|$. If $u\in\R.ox\smallsetminus\{0\}$, we have (see Figure \ref{fig_E+_et_E-_dans_carte})
\begin{equation*}
 \h^-_x(o_r,u)= \frac{a|u|}{s(2a-s)}, \quad \text{and} \quad \h^+_x(o_r,u) =  \frac{|u|}{2s}.
\end{equation*}
Hence
$$f_{x,u}(r) = \frac{2a}{2a-s(r)}$$
which is decreasing and tends to $1$.\\
If $u\in T_x\dC$, then (see Figure \ref{fig_E+_et_E-_dans_carte})
\begin{equation*}
 \h^-_x(o_r,u)= \frac{a|u|}{b\sqrt{s(2a-s)}}, \quad \text{and} \quad \h^+_x(o_r,u) =  \frac{|u|}{\sqrt{s}}.
\end{equation*}
Hence
$$f_{x,u}(r) = \frac{a}{b\sqrt{2a-s(r)}},$$
which is decreasing and tends to $\frac{\sqrt{a}}{b\sqrt{2}}$. Now, direct computations shows that in this chart, $B_x^- = a/b^2$ and $B_x^+ =2$, hence, $f_{x,u}(r)$ converges to $\sqrt{B_x^-(B_x^+)^{-1}} = \sqrt{\frac{1+\varepsilon}{1-\varepsilon}}>1$.
\begin{figure}[h]
 \centering
\begin{pspicture}(0,-3.42)(10.52,3.62)
\psline[linewidth=0.04cm,arrowsize=0.25]{->}(0.0,-3.0)(10.0,-3.0)
\psline[linewidth=0.04cm,arrowsize=0.25]{->}(5.0,-3.0)(5.0,3.6)
\psbezier[linewidth=0.04](0.2,2.6)(1.2,-1.6)(2.6,-3.0)(5.0,-3.0)(7.4,-3.0)(8.8,-1.6)(9.8,2.6)
\psellipse[linewidth=0.04,dimen=outer](5.0,-0.1)(2.0,2.9)

\uput[135](5.0,3.6){$s$}
\uput[-45](10.0,-3.0){$t$}

\rput(9.8,2.8){$\mathcal{E}_x^+ : \; s = |t|^2$}
\rput(2.9,2.9){$\mathcal{E}_x^-: \; \frac{s^2}{a^2}-\frac{2s}{a}+ \frac{|t|^2}{b^2}=0$}

\psline[linewidth=0.04cm,linestyle=dashed](1,0.0)(9,0)

\psline[linewidth=0.06cm,linecolor=red]{->}(5,0.0)(5.7,0)

\psline[linewidth=0.06cm,linecolor=red]{->}(5,0.0)(5,-0.7)

\psdots[dotsize=0.14](1,0)

\psdots[dotsize=0.14](3.02,0)

\psdots[dotsize=0.14](5,0)
\uput[135](5,0){$o_r$}
\psdots[dotsize=0.14](6.98,0)

\psdots[dotsize=0.14](9,0)

\psdots[dotsize=0.14](5,2.8)

\psdots[dotsize=0.14](5,-3)
\put(5,-3.3){$x$}
\end{pspicture} 
\caption{The ellipsoids $\E_x^+$ and $\E_x^-$ in a well-chosen chart} \label{fig_E+_et_E-_dans_carte}
\end{figure}
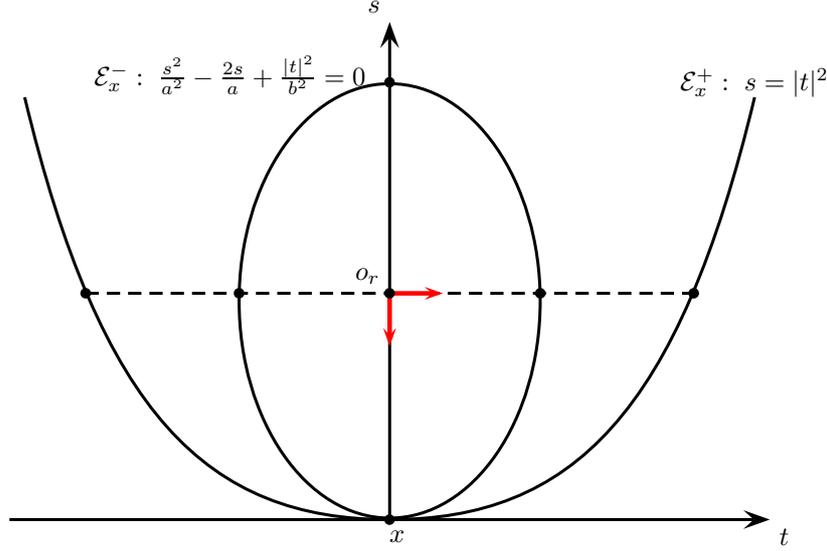
\end{proof}

As a consequence of this lemma, we see that there exists $R\geqslant 0$, depending on $x$ and $\varepsilon$, such that for $r\geqslant R$, we have $f_{x,u}(r)\leqslant \sqrt{\frac{1+\varepsilon}{1-\varepsilon}} + \varepsilon$ for $u\in T_x\dC\smallsetminus\{0\}$ or $u\in\R.ox\smallsetminus\{0\}$. Let us define $R(\varepsilon,x)$ as the smallest $R\geqslant 0$ satisfying this property. Now, the continuity of the functions $x \mapsto \h_x^{\pm}$ (Lemma \ref{lem:continuity}) implies that the function $x \mapsto R(\varepsilon,x)$ is also continuous. Hence, if we set
$$R(\eps):= \sup_{x \in \dC} R(\varepsilon,x), $$
we have that for any $x\in\dC$ and $r\geqslant R(\varepsilon)$, $f_{x,u}(r)\leqslant \sqrt{\frac{1+\varepsilon}{1-\varepsilon}} +\eps$ for $u\in T_x\dC\smallsetminus\{0\}$ or $u\in\R.ox\smallsetminus\{0\}$. Now, each $u\in \R^n$ can be decomposed as $u=u_1+u_2$ with $u_1\in T_x\dC$ and $u_2\in\R.ox$. Remark that $u_1$ and $u_2$ are orthogonal for $\h_x^+$ as well as for $\h_x^-$, so that 
$$\h_x^{\pm}(o_r,u) = \sqrt{\h_x^{\pm}(o_r,u_1)^2 +\h_x^{\pm}(o_r,u_2)^2}.$$ 
For $r>R(\varepsilon)$, we have 

$$
f_{x,u}(r) = \frac{\h^-_x(o_r,u)}{\h^+_x(o_r,u)}  = \sqrt{\frac{\h^-_x(o_r,u_1)^2+\h^-_x(o_r,u_2)^2}{\h^+_x(o_r,u_1)^2+\h^+_x(o_r,u_2)^2}} \leqslant \sqrt{\frac{1+\varepsilon}{1-\varepsilon}} + \eps,\ u\in\R^ n\smallsetminus\{0\}.
$$

That means that for any $x\in\dC$ and $z\in [ox)$ such that $\d(o,z)\geqslant R(\varepsilon)$, we have 
$$1 \leqslant \frac{\F(z,\cdot)}{\h^+_x(z,\cdot)} \leqslant \frac{\h^-_x(z,\cdot)}{\h^+_x(z,\cdot)} \leqslant \sqrt{\frac{1+\varepsilon}{1-\varepsilon}} + \eps.$$
This proves property (3).
\end{proof}

So we get that Hilbert geometries are asymptotically Riemannian:
\begin{cor}\label{cor_suffit}
Let $(\C,\d)$ be a regular Hilbert geometry and $o\in\C$ a base point. For any $C>1$, there exists $R\geqslant 0$ and a continuous Riemannian metric $g$ on $\C\smallsetminus B(o,R)$ such that
$C^{-1} \sqrt{g} \leqslant F_{\C} \leqslant C\sqrt{g}.$
\end{cor}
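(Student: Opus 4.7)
The plan is to glue together the family of Riemannian metrics $\{\h_x\}_{x\in\dC}$ provided by Proposition \ref{prop_hilbert_are_bilipschitz_at_infinity} into a single metric on $\C\smallsetminus B(o,R)$, by using the radial projection from $o$ to the boundary to select, at each point $z$, which $\h_x$ should play the role of $g$ at $z$.

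More precisely, fix $C>1$ and apply Proposition \ref{prop_hilbert_are_bilipschitz_at_infinity} to obtain the continuous family $x\longmapsto \h_x$ and the radius $R=R(C)$. Since $\C$ is a properly convex open set with $C^2$ boundary, every point $z\in\C\smallsetminus\{o\}$ lies on a unique ray $[ox)$ with $x\in\dC$; this defines a continuous map $\Pi\colon \C\smallsetminus\{o\}\longrightarrow \dC$, $z\longmapsto x=\Pi(z)$. I would then set
\[
g(z) := \h_{\Pi(z)}(z,\cdot)\quad \text{for } z\in\C\smallsetminus B(o,R).
\]
Continuity of $g$ follows by composing the continuity of $\Pi$ with the continuity of $x\longmapsto \h_x$ stated in item (1) of the proposition. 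The bilipschitz inequality $C^{-1}\sqrt{g}\leqslant \F\leqslant C\sqrt{g}$ at any such $z$ is exactly item (3) applied at $x=\Pi(z)$, since by definition $z\in[ox)\smallsetminus B(o,R)$ and the inequality there holds on every nonzero tangent vector in $T_z\C$.

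There is no real obstacle to overcome once Proposition \ref{prop_hilbert_are_bilipschitz_at_infinity} is in hand; the only subtlety is to notice that the pointwise bilipschitz control given there is stated for vectors in $T_z\C$ at points $z$ lying on a radial ray, and that the radial projection $\Pi$ precisely lets us make a coherent, continuous choice of such a ray through every point outside $B(o,R)$. This is why it is important that item (3) of the proposition provides the comparison $C^{-1}\leqslant \F(z,\cdot)/\h_x(z,\cdot)\leqslant C$ for \emph{all} tangent vectors at $z$, not merely for those pointing along $[ox)$.
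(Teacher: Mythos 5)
Your construction is exactly the paper's: the paper sets $\sqrt{g_z}=\h_{z^+}$ with $z^+=[oz)\cap\dC$, which is precisely your $g(z)=\h_{\Pi(z)}(z,\cdot)$ via the radial projection, and the bilipschitz bound and continuity are read off from items (3) and (1) of Proposition \ref{prop_hilbert_are_bilipschitz_at_infinity} in the same way. Correct and identical in approach.
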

\begin{proof}
Take the metric $g$ given for $z\in \C\smallsetminus B(o,R)$ by $\sqrt{g_z} = \mathtt{h}_{z^+}$, where $z^+ = [oz)\cap \dC$, and  $\mathtt{h}_{z^+}$ is given by the last lemma.
\end{proof}

We will need the following version of Proposition \ref{prop_hilbert_are_bilipschitz_at_infinity} in section \ref{sec_bottom_of_essential_spectrum}:

\begin{cor}\label{cor_result_for_essential_spectrum}
Let $(\C,\d)$ be a regular Hilbert geometry. To each $x\in\dC$, we can associate a (non-complete) Riemannian hyperbolic metric $\h_x$ defined on an open neighborhood $\mathcal{O}_x$ of $[ox)$ which satisfies the following properties. 
\begin{enumerate}
 \item The application $x\longmapsto \h_x$ is continuous.
 \item We have $\displaystyle \bigcup_{x\in\dC} \mathcal{O}_x = \C$.
 \item The metric $\mathtt{h}_x$ has the same geodesics as $F_{\C}$ on $\mathcal{O}_x$.
 \item Let $C>1$ and 
$$\mathcal{U}_x(C) = \left\{z\in\mathcal{O}_x,\ C^{-1} \leqslant  \frac{F_{\C}(z,\cdot)}{\mathtt{h}_x(z,\cdot)} \leqslant C\right\}.$$ 
There exists $R=R(C)$ such that, for any $x\in\dC$, the intersection of $\mathcal{U}_x(C)$ with $\C\smallsetminus B(o,R)$ is an open neighborhood of $[ox)$ in $\C\smallsetminus B(o,R)$. In particular, we have $\displaystyle \C\smallsetminus B(o,R) \subset \bigcup_{x\in\dC} \mathcal{U}_x(C)$.
\end{enumerate}
\end{cor}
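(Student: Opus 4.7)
The plan is to carry out the construction from the proof of Proposition \ref{prop_hilbert_are_bilipschitz_at_infinity} in the ``osculating'' limit $\varepsilon \to 0$, so that a single family $\{\h_x\}_{x\in\dC}$ works uniformly for every $C>1$. For each $x\in\dC$ I would take $\h_x$ to be the hyperbolic (Hilbert) metric on the osculating projective ellipsoid $\E_x^0$ at $x$: the convex body in $\RP^n$ whose boundary passes through $x$, is tangent to $\dC$ at $x$ with matching Hessian (Section \ref{sec_hessian}), and whose axis lies on the line $(ox)$. This is the common limit of the $\E_x^{\pm}$ of that proposition as $\varepsilon\to 0$. I would then take $\mathcal{O}_x$ to be the connected component of $\E_x^0\cap\C$ containing $[ox)$: it is open in $\C$, and since the osculating sphere $S_x$ lies inside $\E_x^0$ and has $[ox)$ in its interior, the whole ray $[ox)$ lies in $\mathcal{O}_x$.

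Properties (1), (2), (3) should be routine. For (1), continuity of $x\mapsto\h_x$ reduces to continuity of the Hessian $x\mapsto B_x$ together with Lemma \ref{lem_continuity_l}. For (2), any $z\in\C$ lies on the ray $[ox)\subset\mathcal{O}_x$, where $x$ is the forward endpoint on $\dC$ of this ray, so $\bigcup_x\mathcal{O}_x=\C$. For (3), both $F_\C$ and $\h_x$ are Hilbert metrics, so their geodesics are straight projective line segments and thus coincide on $\mathcal{O}_x$.

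The content is in property (4). The key input is the \emph{osculating estimate} $F_\C(z,\cdot)/\h_x(z,\cdot)\to 1$ as $z\to x$ along $[ox)$, uniformly in $x\in\dC$. I would derive it by sandwiching $\h_x$ between the $\h_x^{\pm}$ of Proposition \ref{prop_hilbert_are_bilipschitz_at_infinity} for arbitrarily small $\varepsilon>0$: that proof gives $\h_x^+\leqslant F_\C\leqslant\h_x^-$ on $\E_x^-$ with $\h_x^-/\h_x^+$ tending to $\sqrt{(1+\varepsilon)/(1-\varepsilon)}$ along $[ox)$, and $\h_x^{\pm}\to\h_x$ as $\varepsilon\to 0$. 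Given $C>1$, pick $C'\in(1,C)$; this sandwich together with compactness of $\dC$ and continuity of $x\mapsto\h_x$ yields a single $R=R(C')$ with $C'^{-1}\leqslant F_\C/\h_x\leqslant C'$ on $[ox)\smallsetminus B(o,R)$ for every $x$. Since $C'<C$ strictly and the ratio $(x,z,v)\mapsto F_\C(z,v)/\h_x(z,v)$ is jointly continuous on the (fibre-compact) unit sphere bundle, the strict bound extends to an open neighborhood in $\C\smallsetminus B(o,R)$ of each point of $[ox)\cap(\C\smallsetminus B(o,R))$, on which the weaker bound $C^{-1}\leqslant\cdot\leqslant C$ holds. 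That neighborhood lies in $\mathcal{U}_x(C)$, giving (4).

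The main obstacle is the uniformity in $x$ of the osculating estimate: producing a single $R=R(C)$ valid for every $x\in\dC$. This is the same compactness-plus-continuity argument used at the end of the proof of Proposition \ref{prop_hilbert_are_bilipschitz_at_infinity} to pass from $R(\varepsilon,x)$ to $R(\varepsilon)=\sup_x R(\varepsilon,x)$; combined with the convergence $\h_x^{\pm}\to\h_x$, uniform in $x$ by continuity and compactness of $\dC$, it delivers exactly the uniform bound required here.
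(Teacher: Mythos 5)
Your overall strategy is the one the paper uses: attach to each $x$ an osculating hyperbolic metric $\h_x$ and then run the $\E_x^{\pm}$ sandwich of Proposition \ref{prop_hilbert_are_bilipschitz_at_infinity} to get the fourth property. But the way you pin down $\h_x$ has a genuine gap.

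You define $\E_x^0$ as an ellipsoid tangent to $\dC$ at $x$ with matching Hessian and axis on $(ox)$, and you identify it as ``the common limit of the $\E_x^{\pm}$ as $\varepsilon\to 0$.'' Both parts of this are problematic. First, the data ``tangent at $x$ with matching Hessian, axis on $(ox)$'' determines an ellipsoid only up to a one-parameter family (its extent along the axis is free), so $\E_x^0$ is not a well-defined object. Second, $\E_x^+$ and $\E_x^-$ do \emph{not} have a common limit as $\varepsilon\to 0$: by construction $\E_x^+$ always contains $\overline{\C}$ and $\E_x^-$ is always contained in $\C$, so unless $\C$ is itself an ellipsoid their limits remain strictly nested. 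They only agree to second order at $x$. Because of this, the inclusion $\E_x^- \subset \E_x^0 \subset \E_x^+$ that you implicitly use to get $\h_x^+ \leqslant \h_x \leqslant \h_x^-$ (and hence the sandwich $\h^+_x/\h^-_x \leqslant F_\C/\h_x \leqslant \h^-_x/\h^+_x$ on $\E_x^-$) is never established, and it is exactly what makes property (4) work.

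The paper sidesteps all of this by taking $\h_x$ to be the hyperbolic metric of the osculating Euclidean ball $S_x$ from the proof of Proposition \ref{prop_hilbert_are_bilipschitz_at_infinity}: the ball of radius $1$ for the adapted norm $|\cdot|_x$, \emph{centered at $o$}. This is canonically determined (unlike your $\E_x^0$), and by construction it sits between the spheres of radius $1-\varepsilon$ and $1+\varepsilon$, hence $\E_x^- \subset S_x \subset \E_x^+$ for every $\varepsilon>0$. Combined with $\E_x^-\subset \C\subset \E_x^+$, the two-sided comparison $\h^+_x/\h^-_x \leqslant F_\C/\h_x \leqslant \h^-_x/\h^+_x$ follows immediately, and the uniform bound on $\h^-_x/\h^+_x$ along $[ox)$ from that proposition, together with the compactness argument you describe, finishes property (4). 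If you replace ``$\E_x^0$ = common limit of $\E_x^\pm$'' by ``$S_x$ = osculating $|\cdot|_x$-ball centered at $o$,'' your argument becomes correct and matches the paper's. Your extra remark about extending the bound from $[ox)$ to an open neighborhood by continuity and a strict margin $C'<C$ is a reasonable way to make explicit what the paper leaves implicit.
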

\begin{proof}
We use the objects introduced in the proof of Proposition \ref{prop_hilbert_are_bilipschitz_at_infinity}. We let $\h_x$ be the metric defined by the osculating sphere $S_x$ at $x$ which center is $o$. This is a metric on $\mathcal{O}_x = \E_x \cap \C$, which is an open neighborhood of $[ox)$. It is immediate that $h_x$ and $\mathcal{O}_x$ satisfy the first three points. For the fourth one, pick $\varepsilon>0$ and consider the ellipsoids $\E_x^+$ and $\E_x^-$ which depend on $\varepsilon$. Remark that, as $\E_x^- \subset S_x \subset \E_x^+$ and $\E_x^- \subset \C \subset \E_x^+$, we always have
$$\frac{\h^+_x(z,\cdot)}{\h^-_x(z,\cdot)} \leqslant \frac{F_{\C}(z,\cdot)}{\mathtt{h}_x(z,\cdot)} \leqslant \frac{\h^-_x(z,\cdot)}{\h^+_x(z,\cdot)},$$
for all $z\in \E_x^-$. 

Now, we proved above that there is some $R(\varepsilon)\geqslant 0$ such that, for all $z\in [ox)\smallsetminus B(o,R(\varepsilon))$,  
$$\frac{\h^-_x(z,\cdot)}{\h^+_x(z,\cdot)}\leqslant \sqrt{\frac{1+\varepsilon}{1-\varepsilon}} + \eps.$$

Hence the intersection of the set $\mathcal{U}_x\left(\sqrt{\frac{1+\varepsilon}{1-\varepsilon}} + 2\eps\right)$ with $\C\smallsetminus B(o,R(\varepsilon))$ is an open neighborhood of $[ox)$ in $\C\smallsetminus B(o,R(\varepsilon))$. Since $\varepsilon>0$ is arbitrary, this proves the fourth point.
\end{proof}

\begin{rem}
Note that the metric $\h_x$ in this Corollary is different from the one in in Proposition \ref{prop_hilbert_are_bilipschitz_at_infinity}. In particular, the metric $\h_x$ of the Corollary is independent of $C$.
\end{rem}


\section{Bottom of the spectrum for asymptotically Riemannian metrics} \label{sec_bottom_of_spectrum}

Let $F$ be a $C^2$ Finsler metric on a manifold $M$. Let $\Omega^F$ be the Holmes--Thompson volume for $F$. The \emph{volume entropy} $h$ of $F$ is defined by
\begin{equation*}
 h := \limsup_{R \rightarrow +\infty} \frac{1}{R}\ln \int_{B^F(R)} \Omega^F.
\end{equation*}
In this section, we will show the following

\begin{thm}\label{thm_lambda_1_for_riem_at_infinity}
 Let $F$ be an asymptotically Riemannian $C^2$ Finsler metric on a $n$-manifold $M$. Let $h$ be the volume entropy of $F$ and $\lambda_1$ be the bottom of the spectrum of the Finsler Laplacian $-\Delta^F$. Then,
\begin{equation*}
 \lambda_1 \leqslant h^2/4 .
\end{equation*}
\end{thm}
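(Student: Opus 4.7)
The plan is to carry out a Brooks-type argument by testing the Rayleigh quotient on functions $\phi = \chi \cdot e^{-\alpha d^F(o, \cdot)}$, where $\alpha > h/2$, $o \in M$ is a basepoint and $\chi$ is a smooth cutoff with compact support. The asymptotic Riemannian hypothesis is needed precisely to eliminate the factor $n$ that would otherwise arise from the trivial bound $\sigma^F \leq n (F^*)^2$, which only yields $\lambda_1 \leq n h^2/4$.

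The first step is a sharp comparison lemma: if $g$ is a Riemannian metric on an open set $U \subset M$ with $C^{-1} \sqrt g \leq F \leq C \sqrt g$, then on $U$ the symbol satisfies $\sigma^F \leq \psi(C) g^*$ and the Holmes--Thompson volume satisfies $(1 - \delta(C)) \Omega^g \leq \Omega^F \leq (1 + \delta(C)) \Omega^g$, with $\psi(C) \to 1$ and $\delta(C) \to 0$ as $C \to 1$. This is the crucial point of the proof: the trivial bound $\sigma^F \leq n (F^*)^2 \leq n C^2 g^*$ is loose by a factor of $n$, and the sharp bound $\sigma^F \leq \psi(C) g^*$ requires continuity of the map $F \mapsto \sigma^F$ in the topology of Section~\ref{sec:topology}, which follows from continuity of the Hilbert form $A$, of $A \wedge dA^{n-1}$, and of $\alpha^F$ (using Lemma~\ref{lem_continuity_l} and Proposition~\ref{prop:construction}), combined with the normalization in Definition~\ref{def:delta} that forces $\sigma^g = g^*$ when $g$ is Riemannian.

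In the second step, fix $C > 1$ close to $1$; by Definition~\ref{def_asympt_riemannian} there exist a compact $K \subset M$ and a Riemannian metric $g$ on $M \smallsetminus K$ with $C^{-1} \sqrt g \leq F \leq C \sqrt g$. Let $\chi_0$ be smooth, vanishing on $K$ and equal to $1$ outside a slightly larger compact set, and $\eta_R$ a smooth radial cutoff equal to $1$ on $B^F(o,R)$ and vanishing outside $B^F(o,2R)$, with $|\eta_R'| \leq 2/R$. Set $\phi_R := \chi_0 \eta_R e^{-\alpha d^F(o,\cdot)} \in H^1(M)$. Writing $f := d^F(o, \cdot)$ and expanding $d\phi_R$, the elementary inequality $(a+b)^2 \leq (1+\epsilon) a^2 + (1+\epsilon^{-1}) b^2$ together with the comparison lemma from step~1 shows that the main term of $\int_M \sigma^F(d\phi_R,d\phi_R)\, \Omega^F$ is bounded by $(1+\epsilon)\psi(C) C^2 \alpha^2 \int_M \phi_R^2\, \Omega^F$, using that $F^*(df) \leq 1$ a.e.\ since $f$ is $1$-Lipschitz for $F$ (hence $g^*(df,df) \leq C^2 (F^*(df))^2 \leq C^2$).

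The error terms are standard: the cross-term involving $d\chi_0$ is supported on a fixed compact set (independent of $R$) and hence vanishes in the Rayleigh quotient as $R \to \infty$; the cross-term involving $\eta_R'$ is bounded by a constant multiple of $R^{-2} \int_{R \leq f \leq 2R} e^{-2\alpha f}\, \Omega^F$, and since $\alpha > h/2$, Cavalieri's formula combined with the definition of $h$ gives $\int_M e^{-2\alpha f}\, \Omega^F < \infty$, so this tends to zero. Since $\int_M \phi_R^2\, \Omega^F$ converges to a positive finite limit, we conclude $\lambda_1 \leq \limsup_R R^F(\phi_R) \leq (1+\epsilon) \psi(C) C^2 \alpha^2$. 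Letting $\epsilon \to 0$, $C \to 1$, and $\alpha \to (h/2)^+$ yields $\lambda_1 \leq h^2/4$. The main obstacle is the comparison lemma in step~1, since this is exactly where the asymptotic Riemannian hypothesis is essential for removing the factor $n$ that would otherwise persist as in \cite{moi:these}.
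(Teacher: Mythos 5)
Your proposal is correct and follows essentially the same approach as the paper: the key idea in both is the sharp pointwise control $\sigma^F \leq \psi(C)\, g^{\ast}$ with $\psi(C)\to 1$ (the paper's Proposition~\ref{prop_control_of_sigma}, proved via the Legendre-transform continuity of Lemma~\ref{lem_continuity_l}), which is exactly what removes the factor $n$ that would otherwise come from the crude bound $\sigma^F \leq n (F^\ast)^2$. The test functions differ only cosmetically (you use smooth radial cutoffs $\eta_R$ and take $R\to\infty$, while the paper simply freezes $e^{-s\rho}$ at the value $e^{-sR(C)}$ on the compact core), and the rest -- using $F^\ast(d\rho)\leq 1$, the $L^2$ condition from $2\alpha > h$, and letting $C\to 1$, $\alpha\to h/2$ -- matches the paper's argument step by step.
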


The idea of the proof of Theorem \ref{thm_lambda_1_for_riem_at_infinity} follows the Riemannian one: we show that we can choose $s$ such that the function $e^{-s d(O,x)}$ has a Rayleigh quotient as close as we want to $h^2/4$. The difficulty is in the control of the Rayleigh quotient. In Section \ref{subsec_control_of_symbol} we show how we can manage to control the Rayleigh quotient by controlling the symbol of the Finsler Laplacian.

As we proved that regular Hilbert geometries are asymptotically Riemannian (Corollary \ref{cor_suffit}), and we know that the volume entropy is $n-1$ (\cite{ColboisVerovic:hilbert_geometry}), we deduce the upper bound in Theorem \ref{thmintro_bottom}:
\begin{cor}
Let $\left(\C, d_{\C} \right)$ be a regular Hilbert geometry. Let $\lambda_1$ be the bottom of the spectrum of the Finsler Laplacian $-\Delta^{\F}$. Then
\begin{equation*}
 \lambda_1 \leqslant \frac{(n-1)^2}{4} \, .
\end{equation*} 
\end{cor}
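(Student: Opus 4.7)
The plan is to derive this corollary as a direct consequence of Theorem \ref{thm_lambda_1_for_riem_at_infinity} once two inputs have been assembled. The statement of that theorem says that for any asymptotically Riemannian $C^2$ Finsler metric one has $\lambda_1 \leqslant h^2/4$, so all that remains is to check the two hypotheses (asymptotic Riemannianity and the value of $h$) for a regular Hilbert geometry.

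First I would verify that $F_{\C}$ is an asymptotically Riemannian Finsler metric in the sense of Definition \ref{def_asympt_riemannian}. This is exactly the content of Corollary \ref{cor_suffit}: fixing a basepoint $o\in\C$ and any constant $C>1$, one obtains $R \geqslant 0$ and a continuous Riemannian metric $g$ on $\C\smallsetminus B(o,R)$ with $C^{-1}\sqrt{g} \leqslant F_{\C} \leqslant C\sqrt{g}$. In particular, the pointwise scalar products $g_x$ required in Definition \ref{def_asympt_riemannian} are supplied by $g$ on the complement of the compact set $\overline{B(o,R)}$. Of course we also need $F_{\C}$ to be $C^2$, which is built into the regularity assumption (by Foulon's result, regularity of $\dC$ with positive-definite Hessian makes $F_{\C}$ a genuine Finsler metric in the sense of Definition \ref{def:finsler_metric}).

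Second, I would quote the computation of the volume entropy of a regular Hilbert geometry. It is shown in \cite{ColboisVerovic:hilbert_geometry} that $h = n-1$ for the Holmes--Thompson volume $\Omega^{F_{\C}}$; this is precisely the entropy used in the definition preceding Theorem \ref{thm_lambda_1_for_riem_at_infinity}.

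Combining these two facts, Theorem \ref{thm_lambda_1_for_riem_at_infinity} applies and yields
\[
\lambda_1 \leqslant \frac{h^2}{4} = \frac{(n-1)^2}{4},
\]
which is the desired inequality. There is no real obstacle here beyond checking that the two cited results are in the correct form to be plugged into Theorem \ref{thm_lambda_1_for_riem_at_infinity}; the substance of the argument is in establishing Theorem \ref{thm_lambda_1_for_riem_at_infinity} itself and in Corollary \ref{cor_suffit}, both of which have already been proved.
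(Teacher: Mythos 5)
Your proposal is correct and coincides exactly with the paper's argument: the corollary is deduced by combining Corollary \ref{cor_suffit} (regular Hilbert geometries are asymptotically Riemannian), the fact from \cite{ColboisVerovic:hilbert_geometry} that the volume entropy equals $n-1$, and Theorem \ref{thm_lambda_1_for_riem_at_infinity}. Nothing further is needed.
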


Note that, for generic  asymptotically Riemannian Finsler metrics, we do not always have $\lambda_1 >0$, even when the volume entropy is positive. Indeed, there exists examples of Riemannian metrics on the universal cover of a manifold such that the volume entropy is positive and $\lambda_1=0$ (for instance the solvmanifold described in \cite{BolsinovTaimanov}). However, in the case of regular Hilbert metrics, this is not possible as the next lemma asserts, which gives the lower bound of Theorem \ref{thmintro_bottom}.

\begin{lem}
 Let $\left(\C, d_{\C} \right)$ be a regular Hilbert geometry. Let $\lambda_1$ be the bottom of the spectrum of the Finsler Laplacian $-\Delta^{\F}$. Then $\lambda_1 >0$.
\end{lem}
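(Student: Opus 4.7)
The plan is to establish a Cheeger-type lower bound for $\lambda_1$. As recalled at the end of Section \ref{sec:operator}, $\Delta^{F_{\C}}$ is a weighted Laplacian with underlying Riemannian cometric $\sigma^{F_{\C}}$ and density $a^2$, where $\Omega^{F_{\C}} = a^2\,\Omega^{\sigma^{F_{\C}}}$. The classical Buser--Cheeger inequality applied in this weighted setting (see \cite{ChavelFeldman:Isoperimetric_constants}) gives
\begin{equation*}
 \lambda_1(\C) \geqslant \tfrac{1}{4} \bigl(\cheegerapoids\bigr)^2,
\end{equation*}
where $\cheegerapoids$ is the corresponding weighted Cheeger constant, i.e., the infimum over relatively compact smooth domains $\Omega \subset \C$ of the ratio $\mu_{n-1}(\partial\Omega)/\mu_n(\Omega)$, with $\mu_n = a^2\,\vol_{\sigma^{F_{\C}}}$ and $\mu_{n-1}$ the associated weighted hypersurface measure. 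The task reduces to proving $\cheegerapoids > 0$.

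The key input is the asymptotic Riemannian property from Section \ref{sec_behavior_at_infinity_regular_hilbert}. Fix a constant $C > 1$ and apply Corollary \ref{cor_suffit} to obtain $R > 0$ and a Riemannian hyperbolic metric $g$ on $\C \smallsetminus B(o,R)$ satisfying $C^{-1}\sqrt{g} \leqslant F_{\C} \leqslant C\sqrt{g}$. A direct computation from the integral expressions for $\sigma^{F_{\C}}$ and for $\Omega^{F_{\C}}$ (via $A \wedge dA^{n-1}$) shows that this bilipschitz equivalence transfers to $\sigma^{F_{\C}}$ versus $g^{\ast}$ and to $\mu_n$ versus the Riemannian volume of $g$, with constants depending only on $n$ and $C$, and likewise for the induced hypersurface measures. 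Since the Cheeger constant of hyperbolic space equals $n-1$, this yields a uniform positive lower bound $\mu_{n-1}(\partial\Omega) \geqslant c_1(n,C)\,\mu_n(\Omega)$ for every relatively compact smooth $\Omega \subset \C \smallsetminus B(o,R)$.

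It remains to treat compact domains $\Omega \subset \C$ that meet $\overline{B(o,R+1)}$. I would split such $\Omega$ along a well-chosen geodesic sphere: by a Chebyshev-type argument applied to the co-area formula, there exists $\rho \in [R+1, R+2]$ such that $\mu_{n-1}(\Omega \cap \partial B(o,\rho)) \leqslant \mu_n\bigl(\Omega \cap (B(o,R+2) \smallsetminus B(o,R+1))\bigr)$. This decomposes $\Omega$ into an interior part $\Omega_{-} \subset \overline{B(o,R+2)}$ and an exterior part $\Omega_{+} \subset \C \smallsetminus B(o,R+1)$. The exterior part is controlled by the previous paragraph, while the interior part lies in the relatively compact smooth Finsler region $\overline{B(o,R+3)}$, where a local Euclidean-type isoperimetric inequality gives $\mu_{n-1}(\partial\Omega_{-}) \geqslant c_2\,\mu_n(\Omega_{-})^{(n-1)/n}$. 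A case distinction according to whether $\mu_n(\Omega_{-})$ or $\mu_n(\Omega_{+})$ dominates $\mu_n(\Omega)$ then produces a uniform positive lower bound for $\cheegerapoids$.

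The main technical obstacle is the interplay between the two pieces of the decomposition: the slice along $\partial B(o,\rho)$ contributes to $\partial\Omega_{\pm}$ without being part of $\partial\Omega$, so one must choose $\rho$ so that this parasitic term is absorbed by a fixed fraction of $\mu_n(\Omega)$. All remaining ingredients -- the weighted Buser--Cheeger inequality, the bilipschitz transfer of $F_{\C}$ to $\sigma^{F_{\C}}$, $\Omega^{F_{\C}}$ and the associated hypersurface measures, and local isoperimetry on relatively compact smooth Finsler pieces -- are standard, so combining them gives $\cheegerapoids > 0$, hence $\lambda_1(\C) > 0$.
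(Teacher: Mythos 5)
Your proof takes a genuinely different route from the paper's. The paper's proof is essentially a one-liner: it invokes the fact \cite{ColboisVerovic:hilbert_geometry} that a regular Hilbert metric is \emph{globally} bi-Lipschitz to the hyperbolic metric on $\C$, and then applies the Barthelm\'e--Colbois comparison theorem (the theorem quoted right after the lemma), which shows that a uniform $C$-bi-Lipschitz control of two Finsler metrics gives a $C^K$-control of their energies; combined with the volume comparison of Lemma~\ref{lem_control_of_m_mu} this turns a comparison of Rayleigh quotients into $\lambda_1(\C)\geqslant C^{-K'}\lambda_1(\Hyp^n)>0$. Your strategy is instead the weighted Cheeger inequality of Proposition~\ref{prop_Cheeger_inequality} together with the asymptotically Riemannian property, which is the machinery the paper reserves for the essential-spectrum bound of Lemma~\ref{lem_cheeger_vs_hyperbolic}. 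That route can be made to work, but as written it has a real gap, and it is substantially longer than the paper's argument even when fixed.

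The gap is in the exterior estimate. Corollary~\ref{cor_suffit} does \emph{not} give a hyperbolic metric $g$ on $\C\smallsetminus B(o,R)$; it gives a continuous Riemannian metric $g$, pieced together pointwise from the osculating hyperbolic metrics $\h_{z^+}$, and there is no reason a priori that $\cheeger$ of such a patchwork metric is close to $n-1$. The Cheeger constant is a global isoperimetric quantity, and a pointwise $C$-bi-Lipschitz control by \emph{some} Riemannian metric whose Cheeger constant you have not established gives you nothing. This is precisely why the paper's Lemma~\ref{lem_cheeger_vs_hyperbolic} is not a one-line transfer: it decomposes the test domain $D$ into pieces $K_{x_i}\cap D$ along cones whose boundaries are geodesics both for $F_{\C}$ and for the osculating hyperbolic metrics $\h_{x_i}$ (the geodesic compatibility of Corollary~\ref{cor_result_for_essential_spectrum}, item (3), is essential), applies the hyperbolic divergence computation of Claim~\ref{claim_computation_in_hyperbolic} in each cone, and sums. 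Without that decomposition --- or without invoking the global bi-Lipschitz equivalence with $\Hyp^n$, which would also force $\cheeger(g)$ to be positive --- your claim ``this yields a uniform positive lower bound $\mu_{n-1}(\partial\Omega)\geqslant c_1\mu_n(\Omega)$'' on $\C\smallsetminus B(o,R)$ is unjustified. Your slicing argument across $\partial B(o,\rho)$ also needs sharpening: choosing $\rho\in[R+1,R+2]$ only bounds the slice measure by the annulus volume, which can be a definite fraction of $\mu_n(\Omega)$; you need to average over a long interval $[R+1,R+T]$ with $T$ large so that the parasitic slice term is a small fraction of $\min(c_1,c_3)\,\mu_n(\Omega)$.
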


\begin{proof}
 By \cite{ColboisVerovic:hilbert_geometry}, we know that a regular Hilbert metric is bi-Lipschitz equivalent to the hyperbolic space, so, by \cite[Theorem 4]{BarthelmeColbois_eigenvalue_control} (that we recall below) and the Min-Max principle, we deduce that the $\lambda_1$ of $F_{\C}$ is bounded below by $C^{-1} (n-1)$, where $C>1$ is a constant depending on $n$ and the bi-Lipschitz control.
\end{proof}

\begin{thm} [Barthelm\'e--Colbois \cite{BarthelmeColbois_eigenvalue_control}]
Let $F$ and $F_0$ be two Finsler metrics on an $n$-manifold $M$.
Suppose that there exists $C >1$ such that, for any $(x,v) \in TM\smallsetminus \{0\}$,
\begin{equation*}
 C^{-1} \leq \frac{F(x,v)}{F_{0}(x,v)} \leq C.
\end{equation*}
 Let $C_1$ and $C_2$ be the quasireversibility constants of $F$ and $F_0$ respectively. Then, there exists a constant $K\geq 1$, depending on $C$, $C_1$, $C_2$ and $n$, such that, for any $f \in H^1(M)$,
\begin{equation*}
 C^{-K} \leq \frac{E^F(f)}{E^{F_{0}}(f)} \leq C^K.
\end{equation*}
\end{thm}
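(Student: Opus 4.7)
The plan is to reduce the bi-Lipschitz comparison of the energies to a pointwise bi-Lipschitz comparison of the integrands. Using the description of $\Delta^F$ as a weighted Laplacian recalled in Section \ref{sec:operator}, the energy admits the expression
\begin{equation*}
 E^F(f) = \int_M \sigma^F_x(df_x, df_x)\, \Omega^F_x,
\end{equation*}
so it suffices to compare, at each point $x \in M$, the product $\sigma^F_x(\xi,\xi)\,\Omega^F_x$ against $\sigma^{F_0}_x(\xi,\xi)\,\Omega^{F_0}_x$ for every $\xi \in T^*_xM$, and then to integrate the pointwise bound against $df$.

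First I would handle the volume densities, which is the easy part. Since $\frac{1}{(n-1)!}\Omega^F$ is the Holmes--Thompson volume form, $\Omega^F_x$ is, up to a dimensional constant, the Euclidean volume of the dual unit ball $B^{*,F}_x := \{p\in T^*_xM : F^*(x,p) \leq 1\}$. The hypothesis $C^{-1}F_0 \leq F \leq C F_0$ dualizes by a direct supremum computation to $C^{-1}F_0^* \leq F^* \leq C F_0^*$, and hence $C^{-1}B^{*,F_0}_x \subset B^{*,F}_x \subset C\, B^{*,F_0}_x$. Taking Euclidean volumes yields $C^{-n}\Omega^{F_0}_x \leq \Omega^F_x \leq C^n \Omega^{F_0}_x$; note that this step does not use quasireversibility.

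For the symbol I would start from the integral expression
\begin{equation*}
 \sigma^F_x(\xi,\xi) = \frac{n}{\voleucl(\S^{n-1})} \int_{S^F_xM} \xi(v)^2 \, \alpha^F(v)
\end{equation*}
given in Section \ref{sec:operator}. This writes $\sigma^F_x$ as an average of $v \mapsto \xi(v)^2$ over the unit tangent sphere of $F$ against the Holmes--Thompson angular measure $\alpha^F$. The bi-Lipschitz comparison of the unit tangent spheres controls the integrand $\xi(v)^2$ directly by $C^2$. The density of $\alpha^F$ with respect to the Euclidean angular measure on $H_xM \cong \S^{n-1}$ can be computed via the Legendre transform from the defining relation $\alpha^F\wedge\pi^*\Omega^F = A\wedge dA^{n-1}$, and is expressible in terms of the shape of the dual unit ball; the bi-Lipschitz bound on $F^*$ then translates into a bi-Lipschitz bound on $\alpha^F$ against $\alpha^{F_0}$. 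Combining these two controls produces a pointwise inequality of the form $C^{-K'}\sigma^{F_0}_x \leq \sigma^F_x \leq C^{K'}\sigma^{F_0}_x$ as bilinear forms.

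The hard part will be this last step: tracking the quantitative dependence of the Holmes--Thompson angular density on the shape of the unit ball. For highly asymmetric norms the measure $\alpha^F$ can concentrate very unevenly on $\S^{n-1}$, and the ratio between $\alpha^F$ and $\alpha^{F_0}$ is not controlled by $C$ alone---this is precisely the point at which the quasireversibility constants $C_1$ and $C_2$ enter the exponent $K'$, via a control of how elongated the unit balls can be in a direction versus its opposite. Once the pointwise bounds are combined and integrated against $f \in H^1(M)$, one recovers the theorem with $K = K' + n$.
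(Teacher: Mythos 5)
The paper does not reprove this theorem---it cites it from \cite{BarthelmeColbois_eigenvalue_control}---but the closest proof in the text is that of Proposition~\ref{prop_control_of_sigma}, which has the same structure, and it is against that argument that your proposal should be measured.

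Your decomposition $E^F(f)=\int_M\sigma^F_x(df,df)\,\Omega^F$ and your control on the volume densities via dual unit balls ($C^{-n}\leqslant\mu\leqslant C^n$) are both correct and match Lemma~\ref{lem_control_of_m_mu}. The gap is in the step where you claim that ``the bi-Lipschitz bound on $F^*$ translates into a bi-Lipschitz bound on $\alpha^F$ against $\alpha^{F_0}$.'' This pointwise comparison is false in general, and this is precisely the difficulty the actual proof is designed to avoid. Lemma~\ref{lem_alpha_and_omega} gives
\begin{equation*}
\alpha^F_{(x,\xi)} = \frac{1}{\mu(x)}\left(\frac{F_0^{\ast}}{F^{\ast}}(\ell_F(\xi))\right)^n\left(\ell_{F_0}^{-1}\circ\ell_F\right)^{\ast}\alpha^{F_0}_{(x,\xi)},
\end{equation*}
and while the scalar prefactors on the right are controlled by powers of $C$, the pullback term is not: the ratio $\alpha^F/\alpha^{F_0}$ involves the Jacobian of the spherical diffeomorphism $\ell_{F_0}^{-1}\circ\ell_F$, and the Legendre transform $\ell_F$ depends on the \emph{second} derivatives of $F$. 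A $C$-bi-Lipschitz bound $C^{-1}F_0\leqslant F\leqslant CF_0$ does not control second derivatives, so the Jacobian of $\ell_{F_0}^{-1}\circ\ell_F$ can be arbitrarily large even when $C$ is close to $1$; there is no estimate of the form $C^{-K''}\alpha^{F_0}\leqslant\alpha^F\leqslant C^{K''}\alpha^{F_0}$ with $K''$ depending only on $C$, $n$ and the quasireversibility constants.

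The way the paper's Proposition~\ref{prop_control_of_sigma} (and, implicitly, the cited theorem) circumvents this is by never estimating the Jacobian at all: one performs a change of variables in the integral, so that the pullback $\left(\ell_{F_0}^{-1}\circ\ell_F\right)^{\ast}$ is turned into a precomposition of the integrand with $\ell_F^{-1}\circ\ell_{F_0}$. Concretely,
\begin{equation*}
\lVert p\rVert_{\sigma^F}^2 = c_n\int_{H_xM}\left(m\circ\ell_F^{-1}\circ\ell_{F_0}\right)^2\left(L_{X_0}\pi^{\ast}\phi\circ\ell_F^{-1}\circ\ell_{F_0}\right)^2\mu^{-1}\left(\frac{F_0^{\ast}}{F^{\ast}}\circ\ell_{F_0}\right)^n\alpha^{F_0},
\end{equation*}
and the remaining scalar factors are pointwise controlled. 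Even then, the composed integrand $p\bigl(\L_F^{-1}\circ\L_{F_0}(u)\bigr)^2$ is \emph{not} pointwise comparable to $p(u)^2$ (the latter may vanish where the former does not), so the comparison with $\lVert p\rVert_{\sigma^{F_0}}^2$ is carried out at the level of integrals by writing $\L_F^{-1}\circ\L_{F_0}(u)=u+\varepsilon(u)$, with $\varepsilon(u)$ small by continuity of the Legendre transform (Lemma~\ref{lem_continuity_l}), and applying Cauchy--Schwarz. Both of these ideas---killing the Jacobian by changing variables, and replacing a pointwise estimate on the integrand by an integral estimate via Cauchy--Schwarz---are absent from your proposal, and the second one is where the continuity of the Legendre transform (and, in the general Finsler setting, the quasireversibility constants) actually enters. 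As it stands, the central claim of pointwise comparability of the angular forms is the wrong approach and would fail.
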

 Note that in \cite{BarthelmeColbois_eigenvalue_control} this Theorem is stated for $M$ compact, but stays true for non-compact manifolds without any change to the proof.

\subsection{Control of the symbol for pointwise bi-Lipschitz metrics} \label{subsec_control_of_symbol}

In this section we prove that, given a bi-Lipschitz control of a Finsler metric by a Riemannian one, we can control the symbol of the Finsler Laplacian by the dual Riemannian metric. Note that this result is not as clear as in Riemannian geometry, as the symbol of the Finsler Laplacian a priori depends on derivatives of the Finsler metric.

\begin{prop} \label{prop_control_of_sigma}
 Let $F$ be a Finsler metric on a $n$-manifold $M$, $x \in M$, and $g_x$ a scalar product on $T_xM$.
Suppose that there exists $C \geqslant 1$ such that, for all $v \in T_xM\smallsetminus\{0\}$,
\begin{equation*}
 C^{-1} \leqslant \frac{F(x,v)}{\sqrt{g_x(v,v)}} \leqslant C.
\end{equation*} 
Then there exists a constant $C'\geqslant 1$, depending only on $C$ and $n$, such that, for all $p \in T^{\ast}_x M$,
\begin{equation*}
 C'^{-1} \leqslant \frac{\sigma_F(p,p)}{g_x^{\ast}(p,p)} \leqslant C'.
\end{equation*}
Furthermore, 
\begin{equation*}
 \lim_{C\rightarrow 1} C'(C,n) = 1.
\end{equation*}
\end{prop}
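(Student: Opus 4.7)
The plan is to reduce the claim to a pointwise comparison on a single vector space, and then use the integral formula for $\sigma^F$ from Section~\ref{sec:operator}. By the naturality of the Finsler--Laplace construction, the value of $\sigma^F_x$ depends only on the norm $F(x,\cdot)$ on $T_xM$: in the formula
$$
\sigma^F_x(\xi,\xi) = \frac{n}{\voleucl(\S^{n-1})}\int_{S^F_xM}\xi(v)^2\,\alpha^F(v),
$$
both the unit sphere $S^F_xM\subset T_xM$ and the restriction of $\alpha^F$ to the fiber $H_xM$ depend only on this norm. I will therefore view $V=T_xM$ as a single vector space equipped with two norms, the Finsler norm $F := F(x,\cdot)$ and the Euclidean norm $F_0 := \sqrt{g_x(\cdot,\cdot)}$, satisfying $C^{-1}F_0 \leq F \leq CF_0$, and compare the dual quadratic forms $\sigma^F$ and $\sigma^{F_0}=g_x^*$ on $V^*$.

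Next, I rewrite the integrals over the common sphere $\S^{n-1} := S^{F_0}(1)$. Parameterize $S^F(1)\subset V$ by the radial homeomorphism $\eta\mapsto \eta/F(\eta)$ from $\S^{n-1}$; by positive homogeneity,
$$
\sigma^F(\xi,\xi) = \frac{n}{\voleucl(\S^{n-1})}\int_{\S^{n-1}} \frac{\xi(\eta)^2}{F(\eta)^2}\,\widetilde{\alpha}^F(\eta),
$$
where $\widetilde{\alpha}^F$ denotes the pullback of $\alpha^F|_{H_xM}$ to $\S^{n-1}$. For $F = F_0$ one has $F_0(\eta)=1$ and $\widetilde{\alpha}^{F_0}=d\sigma$ (the Euclidean spherical form), and this recovers the classical identity $g_x^*(\xi,\xi) = \frac{n}{\voleucl(\S^{n-1})}\int_{\S^{n-1}} \xi(\eta)^2\,d\sigma(\eta)$. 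The hypothesis immediately provides the uniform pointwise bound $C^{-2}\leq 1/F(\eta)^2 \leq C^2$ on $\S^{n-1}$, so the whole question reduces to comparing the spherical densities $\widetilde{\alpha}^F$ and $d\sigma$.

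To control this density, I plan to use the Legendre-transform description of $\alpha^F$. The Hilbert form $A$ on $HM$ coincides with the pullback via $\ell_F$ of the canonical Liouville $1$-form on $H^*M$, so the identity $\alpha^F\wedge \pi^*\Omega^F = A\wedge dA^{n-1}$ together with the normalization $\int_{H_xM}\alpha^F = \voleucl(\S^{n-1})$ expresses $\alpha^F|_{H_xM}$ as the pullback by $\ell_F$ of a canonical $(n-1)$-form on the dual unit sphere $H^*_xM$. Lemma~\ref{lem_continuity_l}, and more precisely the quantitative bound contained in its proof (see Figure~\ref{fig_the_control_legendre_transform}), provides an explicit control on $d(\ell_F,\ell_{F_0})$ in terms of $\ln C$; transferring this to the Jacobian yields a bi-Lipschitz estimate $c(C)^{-1}\,d\sigma \leq \widetilde{\alpha}^F \leq c(C)\,d\sigma$ with $c(C)\to 1$ as $C\to 1$. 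Combining with the pointwise bound on $1/F^2$ produces $(C^2 c(C))^{-1}\, g_x^* \leq \sigma^F \leq C^2 c(C)\, g_x^*$, so that $C' := C^2 c(C)$ satisfies both conclusions of the proposition, with $\lim_{C\to 1}C'(C,n)=1$.

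The main obstacle is the third step: turning the $C^0$-angular control of Lemma~\ref{lem_continuity_l} into a genuine bi-Lipschitz bound on the spherical density $\widetilde{\alpha}^F$. Abstract continuity of $F\mapsto \ell_F$ at $F_0$ would only yield an asymptotic statement as $C\to 1$, whereas here one needs a uniform estimate for each fixed $C$. Extracting such a quantitative bound on the Jacobian of $\ell_F^{-1}\circ\ell_{F_0}$ (not merely on its displacement on the sphere) is where the actual work of the proof lies.
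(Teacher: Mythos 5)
Your structural framing matches the paper's: reduce to the single fibre $V = T_xM$, extend $g_x$ to a Riemannian metric $F_0$, and compare $\sigma^F$ with $\sigma^{F_0} = g_x^*$ via the Legendre transform. But the step you yourself flag as ``where the actual work of the proof lies'' is a genuine gap, and worse, the route you propose through it cannot work: a pointwise bi-Lipschitz bound $c(C)^{-1}\,d\sigma \leqslant \widetilde{\alpha}^F \leqslant c(C)\,d\sigma$, i.e.\ a bound on the \emph{Jacobian} of $\ell_F^{-1}\circ\ell_{F_0}$ on the sphere, involves second derivatives of $F$. The hypothesis $C^{-1}\leqslant F/F_0\leqslant C$ controls only the zeroth-order data, and one can keep that bound while making the Hessian of $F^2$ arbitrarily anisotropic, so no Jacobian estimate depending only on $(C,n)$ is available. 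Lemma~\ref{lem_continuity_l} gives $C^0$-closeness of $\ell_F^{-1}\circ\ell_{F_0}$ to the identity (a displacement bound), not a $C^1$ bound, and that cannot be promoted to a density bound without extra hypotheses.

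The paper avoids the issue entirely. Lemma~\ref{lem_alpha_and_omega} writes $\alpha^F = \mu^{-1}\bigl(\tfrac{F_0^*}{F^*}\circ\ell_F\bigr)^n(\ell_{F_0}^{-1}\circ\ell_F)^*\alpha^{F_0}$ and one also has $X = mX_0 + Y$ with $Y$ vertical. Plugging both into $\lVert p\rVert_{\sigma^F}^2 = c_n\int_{H_xM}(L_X\pi^*\phi)^2\,\alpha^F$ and changing variables by $\ell_F^{-1}\circ\ell_{F_0}$, the pullback form becomes $\alpha^{F_0}$ again on the nose; the Jacobian is absorbed by the substitution, never estimated. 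What survives is an integral against $\alpha^{F_0}$ of the bounded prefactors $m^2$, $\mu^{-1}$, $(F_0^*/F^*)^n$ (each controlled by Lemma~\ref{lem_control_of_m_mu}, giving the $C^{2n+2}$) times $p\bigl(\L_F^{-1}\circ\L_{F_0}(u)\bigr)^2$. At that point the $C^0$ control of Lemma~\ref{lem_continuity_l} is exactly what is needed: writing $\L_F^{-1}\circ\L_{F_0}(u) = u + \eps(u)$ with $F_0(\eps(u)) \leqslant \eps(C)$, expanding the square, and applying Cauchy--Schwarz to the cross term yields the factor $1 + 2\eps(C)$, hence $C' = C^{2n+2}(1+2\eps(C)) \to 1$ as $C \to 1$. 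In short: don't compare the densities $\widetilde{\alpha}^F$ and $d\sigma$; compare the \emph{integrands} $p(u)^2$ and $p(\L_F^{-1}\circ\L_{F_0}(u))^2$ against the single reference density $\alpha^{F_0}$.
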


In \cite{BarthelmeColbois_eigenvalue_control} the first two authors gave a proof of the existence of a $C'$ satisfying the inequality, but not the limit condition. Hence, we here do the proof with a bit more care to ensure this second condition.

Let us fix a $C^2$ Riemannian metric $F_0$ on $M$ such that $F_0(x, \cdot) = \lVert \cdot \rVert_{g_x}$. Let $X_0$ and $X$ be the geodesic vector fields associated with $F_0$ and $F$ respectively. There exists a function $m \colon M \rightarrow (0,+\infty)$ and a vertical vector field $Y \colon HM \rightarrow VHM$ such that $X = m X_0 + Y$. Actually, we have $m=\frac{F_0}{F}$.\\

Before going on to the proof, we start by stating some results that we will need (the proofs are quite elementary and can be found in \cite{BarthelmeColbois_eigenvalue_control}):

\begin{lem} \label{lem_alpha_and_omega}
Let $F$ and $F_0$ be two Finsler metrics on $M$, $X$ and $X_0$ the associated geodesic vector fields. Let $m \colon HM \longrightarrow \R$ be the function $m=\frac{F_0}{F}$ and $\mu \colon M \longrightarrow \R$ be defined by
\begin{equation*}
\mu (x) :=  \left(\voleucl \left(\S^{n-1} \right) \right)^{-1} \int_{H_x^{\ast} M} \left(\frac{F_0^{\ast}}{F^{\ast}} \right)^n \left(\ell_{F_0}^{-1}\right)^{\ast} \alpha^{F_0},\ x\in M. 
\end{equation*}
Then $X = m X_0 + Y$ for some vertical vector field $Y \in VHM$, $\Omega^F = \mu \Omega^{F_0}$ and 
$$\alpha^F_{(x,\xi)} = \frac{1}{\mu(x)} \left(\frac{F_0^{\ast}}{F^{\ast}}(\ell_{F}(\xi)) \right)^n  \left(\ell_{F_0}^{-1} \circ \ell_F \right)^{\ast}  \alpha^{F_0}_{(x,\xi)}.$$

\end{lem}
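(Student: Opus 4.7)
The splitting $X = mX_0 + Y$ with $Y \in VHM$ is essentially tautological. At $(x,\xi)\in HM$, let $v$ be the $F$-unit vector representing $\xi$. From the Reeb equations for the Hilbert forms of $F$ and $F_0$ one finds $d\pi(X) = v$ and $d\pi(X_0) = v/F_0(x,v)$. Since $m(x,\xi) = F_0(x,v)/F(x,v) = F_0(x,v)$, we have $d\pi(X-mX_0)=0$, and hence $X - mX_0 \in \ker d\pi = VHM$.

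The plan for the other two identities is to realise $A\wedge dA^{n-1}$ as the pullback of the canonical contact volume on the unit cotangent bundle of $F$, compare the contact volumes of $F$ and $F_0$ by radial rescaling in the fibres of $T^*M$, and pull back to $HM$. Let $\lambda$ denote the Liouville $1$-form on $T^*M$, and let $\tilde\ell_F\colon HM \to \{F^*=1\} \subset T^*M$ be the lift of $\ell_F$ sending $(x,\xi)$ to $\mathcal{L}_F(x,v)$ for the $F$-unit vector $v$ representing $\xi$. Unwinding the definition of the Hilbert form gives $A = \tilde\ell_F^{\ast}\lambda$, hence
\begin{equation*}
A \wedge dA^{n-1} = \tilde\ell_F^{\ast}(\lambda \wedge d\lambda^{n-1}),
\end{equation*}
and similarly $A_0 \wedge dA_0^{n-1} = \tilde\ell_{F_0}^{\ast}(\lambda \wedge d\lambda^{n-1})$.

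Next, consider the fibrewise radial rescaling $\Psi\colon \{F^*=1\} \to \{F_0^*=1\}$, $\Psi(x,p) = (x, p/F_0^*(x,p))$. A direct computation gives $\Psi^{\ast}\lambda = \lambda/F_0^*$, and in the Leibniz expansion of $\Psi^{\ast}(d\lambda)^{n-1}$ every summand involving at least two factors of $\lambda$ is annihilated by $\lambda\wedge\lambda=0$. Only one term survives and yields
\begin{equation*}
\Psi^{\ast}(\lambda \wedge d\lambda^{n-1}) = (F_0^*)^{-n}\,\lambda \wedge d\lambda^{n-1}.
\end{equation*}
Chasing the Legendre diagram recalled in the Preliminaries shows that $\Psi\circ\tilde\ell_F = \tilde\ell_{F_0}\circ\phi$, where $\phi := \ell_{F_0}^{-1}\circ\ell_F : HM \to HM$ is a fibre-preserving diffeomorphism. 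Pulling the previous identity back through $\tilde\ell_F$ therefore gives
\begin{equation*}
A \wedge dA^{n-1} = \left(\frac{F_0^*}{F^*}\circ \ell_F\right)^n \phi^{\ast}(A_0 \wedge dA_0^{n-1}).
\end{equation*}
Since $\phi$ preserves every fibre $H_xM$, one has $\phi^{\ast}\pi^{\ast}\Omega^{F_0} = \pi^{\ast}\Omega^{F_0}$. Substituting $A_0\wedge dA_0^{n-1} = \alpha^{F_0}\wedge\pi^{\ast}\Omega^{F_0}$ on the right and comparing with $A\wedge dA^{n-1} = \alpha^F\wedge\pi^{\ast}\Omega^F$, the uniqueness in Proposition \ref{prop:construction} forces both $\Omega^F = \mu\,\Omega^{F_0}$ for some positive function $\mu$ on $M$ and the claimed formula for $\alpha^F$. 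Integrating this formula on $H_xM$, using the normalisation $\int_{H_xM}\alpha^F = \voleucl(\S^{n-1})$, and changing variables via the diffeomorphism $\ell_F\colon H_xM\to H^{\ast}_xM$ recovers the stated integral expression for $\mu$.

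The only non-trivial step is the simplification $\Psi^{\ast}(\lambda\wedge d\lambda^{n-1}) = (F_0^*)^{-n}\lambda\wedge d\lambda^{n-1}$; once it is in place, everything else is a bookkeeping exercise in the commutative diagram of Legendre transforms.
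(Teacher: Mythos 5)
The paper does not actually prove this lemma inline — it refers the reader to the companion paper \cite{BarthelmeColbois_eigenvalue_control} with the comment that ``the proofs are quite elementary.'' So there is no argument here to compare against. Your proof, however, is correct and self-contained. The first claim does follow at once from $d\pi(X)=v$ (with $v$ the $F$-unit representative of $\xi$), $d\pi(X_0)=v/F_0(x,v)$, and $m(x,\xi)=F_0(x,v)$ once $F(x,v)=1$, giving $d\pi(X-mX_0)=0$. The identity $A=\tilde\ell_F^{\ast}\lambda$ is the standard fact that the Hilbert form is the pullback of the Liouville form under the unit-Legendre map; your computation $\Psi^{\ast}\lambda=\lambda/F_0^{\ast}$ and the resulting $\Psi^{\ast}(\lambda\wedge d\lambda^{n-1})=(F_0^{\ast})^{-n}\,\lambda\wedge d\lambda^{n-1}$ (the ``two copies of $\lambda$'' observation) are correct; and the intertwining $\Psi\circ\tilde\ell_F=\tilde\ell_{F_0}\circ\phi$ holds because both sides produce the $F_0^{\ast}$-unit covector in the codirection $\ell_F(\xi)$. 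From there, $\phi^{\ast}\pi^{\ast}\Omega^{F_0}=\pi^{\ast}\Omega^{F_0}$ because $\phi$ fixes every fibre, and the uniqueness clause of Proposition \ref{prop:construction} pins down both $\Omega^F=\mu\Omega^{F_0}$ and the formula for $\alpha^F$. Your closing integral identification of $\mu$ is also right: the change of variables $\phi$ turns $\bigl(\tfrac{F_0^{\ast}}{F^{\ast}}\circ\ell_F\bigr)^n\phi^{\ast}\alpha^{F_0}$ into $\bigl(\tfrac{F_0^{\ast}}{F^{\ast}}\circ\ell_{F_0}\bigr)^n\alpha^{F_0}$, which equals the stated $\mu$ after pushing forward by $\ell_{F_0}$. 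The only places you should spell out in a final write-up are the commutativity $\Psi\circ\tilde\ell_F=\tilde\ell_{F_0}\circ\phi$ (a one-line verification, but it is the pivot of the whole argument) and the change of variables producing $\mu$, as these are currently just asserted.
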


\begin{lem} \label{lem_control_of_m_mu}
 Let $F$ and $F_0$ be two Finsler metrics on a $n$-manifold $M$. Suppose that for some $x\in M$, there exists $C \geqslant1$ such that, for any $v \in T_xM\smallsetminus\{0\}$,
\begin{equation*}
 C^{-1} \leqslant \frac{F(x,v)}{F_{0}(x,v)} \leqslant C.
\end{equation*}
Then for any $v \in T_xM\smallsetminus\{0\}$, $\xi \in H_xM$, we have
\begin{align}
 C^{-1} &\leqslant \frac{F^{\ast}(x,v)}{F^{\ast}_{0}(x,v)} \leqslant C, \label{eq_dual_control}\\
 C^{-n} &\leqslant \mu(x) \leqslant C^n \label{eq_control_volume} , \\
 C^{-1} &\leqslant m(x,\xi) \leqslant C \label{eq_control_m}.
\end{align}
\end{lem}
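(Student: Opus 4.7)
The plan is to establish the three inequalities in the order \eqref{eq_dual_control}, \eqref{eq_control_m}, \eqref{eq_control_volume}, each as a short consequence of the hypothesis combined with the structural data supplied by Lemma~\ref{lem_alpha_and_omega}. I do not expect any substantial obstacle; the proof is essentially bookkeeping, and the only slightly technical ingredient is a change of variables for the integral defining $\mu$.

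For \eqref{eq_dual_control}, I start by rewriting the dual norm via positive $1$-homogeneity as
\[
F^{\ast}(x,p) \;=\; \sup_{v \in T_xM \smallsetminus \{0\}} \frac{p(v)}{F(x,v)}.
\]
The hypothesis inverts to $C^{-1}/F_0(x,v) \leqslant 1/F(x,v) \leqslant C/F_0(x,v)$, and it is enough to take the supremum over $v$ with $p(v) \geqslant 0$ (the remaining $v$ contribute a non-positive quantity and never attain the sup). Multiplying by $p(v)$ and taking the supremum against $F_0$ in place of $F$ then yields $C^{-1} F_0^{\ast}(x,p) \leqslant F^{\ast}(x,p) \leqslant C F_0^{\ast}(x,p)$.

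For \eqref{eq_control_m}, the paragraph just before Lemma~\ref{lem_alpha_and_omega} identifies the function $m$ as $m = F_0/F$ (viewed on $HM$ by $0$-homogeneity). Directly inverting the hypothesis gives $C^{-1} \leqslant m(x,\xi) \leqslant C$ for every $\xi \in H_xM$.

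For \eqref{eq_control_volume}, I invoke the formula for $\mu$ from Lemma~\ref{lem_alpha_and_omega}:
\[
\mu(x) \;=\; \voleucl(\S^{n-1})^{-1} \int_{H_x^{\ast}M} \Bigl(\frac{F_0^{\ast}}{F^{\ast}}\Bigr)^{\! n} \bigl(\ell_{F_0}^{-1}\bigr)^{\ast} \alpha^{F_0}.
\]
By \eqref{eq_dual_control}, the ratio $F_0^{\ast}/F^{\ast}$ lies in $[C^{-1},C]$, so its $n$-th power lies in $[C^{-n}, C^n]$. Pulling this pointwise bound out of the integral and performing the change of variables along the diffeomorphism $\ell_{F_0}\colon H_xM \to H_x^{\ast}M$ reduces what remains to $\int_{H_xM} \alpha^{F_0}$, which equals $\voleucl(\S^{n-1})$ by the normalization~\eqref{eq:longueur_fibre} of Proposition~\ref{prop:construction}. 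The prefactor cancels, leaving the desired bound $C^{-n} \leqslant \mu(x) \leqslant C^n$. The whole argument is thus a direct combination of the duality computation in step one with Lemma~\ref{lem_alpha_and_omega}, and no delicate estimate is needed.
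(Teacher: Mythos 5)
Your proof is correct and is essentially the standard, elementary argument the paper has in mind: indeed the paper does not re-prove this lemma but cites \cite{BarthelmeColbois_eigenvalue_control} (remarking only that the pointwise version goes through verbatim), and the three steps you give---the homogeneity rewrite $F^{\ast}(x,p)=\sup_{v\neq 0} p(v)/F(x,v)$ restricted to $p(v)\geqslant 0$ for \eqref{eq_dual_control}, reading off \eqref{eq_control_m} from $m=F_0/F$, and changing variables by $\ell_{F_0}$ in the integral for $\mu$ together with the normalization \eqref{eq:longueur_fibre} for \eqref{eq_control_volume}---are exactly the route suggested by the formulas supplied in Lemma~\ref{lem_alpha_and_omega} and Proposition~\ref{prop:construction}. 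No gap.
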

Note that the result was stated in \cite{BarthelmeColbois_eigenvalue_control} for a uniform bi-Lipschitz control (that is, $C$ was supposed not to depend on $x\in M$), but the proof stays exactly the same in this case of pointwise bi-Lipschitz control.

\begin{proof}[Proof of Proposition \ref{prop_control_of_sigma}]
Let $p \in T_x^{\ast}M\smallsetminus\{0\}$ be fixed. Let $\lVert \cdot \rVert_{g_x^{\ast}}$ be the norm on $T_x^{\ast}M$ dual to the scalar product $g_x$. We suppose that $\lVert p\rVert_{g_{x}^{\ast}} = 1$. Let $\phi \colon M \rightarrow \R$ be a smooth function such that $\phi(x) = 0$ and $d\phi_x = p$. Then the norm of $p$ for the symbol metric $\sigma^F$ is
\begin{equation*}
 \lVert p\rVert_{\sigma^F}^2 = \frac{n}{\voleucl\left(\S^{n-1} \right)} \int_{H_xM} \left(L_X \pi^{\ast} \phi \right)^2 \alpha^F.
\end{equation*}
Let us write $c_n:= n \left(\voleucl\left(\S^{n-1}\right) \right)^{-1}$.

Let $F_0$ be a $C^2$ Riemannian metric such that $F_0(x, \cdot) = \lVert \cdot \rVert_{g_x}$. Let $X_0$ and $X$ be the geodesic vector fields associated with $F_0$ and $F$ respectively. There exists $m \colon M \rightarrow \R$ and $Y \colon HM \rightarrow VHM$ such that $X = m X_0 + Y$, so, using Lemma \ref{lem_alpha_and_omega} and the change of variable formula, we get
\begin{align*}
 \lVert p\rVert_{\sigma^F}^2 &= c_n \int_{H_xM} m^2 \left(L_{X_0} \pi^{\ast} \phi \right)^2 \left(\ell_{F_0}^{-1} \circ \ell_F \right)^{\ast} \left[ \mu^{-1} \left(\frac{F_0^{\ast}}{F^{\ast}} \circ \ell_{F_0} \right)^n \alpha^{F_0} \right] \\
    &= c_n \int_{H_xM} \left(m\circ \ell_{F}^{-1} \circ \ell_{F_0} \right)^2 \left(L_{X_0} \pi^{\ast} \phi \circ \ell_{F}^{-1} \circ \ell_{F_0} \right)^2  \mu^{-1} \left(\frac{F_0^{\ast}}{F^{\ast}} \circ \ell_{F_0} \right)^n \alpha^{F_0}.
\end{align*}
Now, using Lemma \ref{lem_control_of_m_mu}, we have that
\begin{align*}
 \lVert p\rVert_{\sigma^F}^2 &\leqslant c_n C^{2n +2} \int_{H_xM} \left(L_{X_0} \pi^{\ast} \phi \circ \ell_{F}^{-1} \circ \ell_{F_0} \right)^2 \alpha^{F_0},\\
 \lVert p\rVert_{\sigma^F}^2 &\geqslant c_n C^{-2n -2} \int_{H_xM} \left(L_{X_0} \pi^{\ast} \phi \circ \ell_{F}^{-1} \circ \ell_{F_0} \right)^2 \alpha^{F_0}.
\end{align*}
That means we have
\begin{align*}
\frac{\lVert p\rVert_{\sigma^F}^2}{ \lVert p\rVert_{\sigma^{F_0}}^2} &\leqslant 
  C^{2n +2}\frac{\displaystyle \int_{S^{F_0}_xM} p(\L_{F}^{-1} \circ \L_{F_0} (u))^2 \alpha^{F_0}(u)}{\displaystyle\int_{S^{F_0}_xM} p(u)^2 \alpha^{F_0}(u)}.
\end{align*}
But, by continuity of $\L$ (Lemma \ref{lem_continuity_l}), we have $\L_{F}^{-1} \circ \L_{F_0} (u) = u + \varepsilon(u)$ with $F_0(\varepsilon(u))\leqslant \varepsilon(C)$, where $C\longmapsto \varepsilon(C)$ is a continuous function such that $\varepsilon(0)=0$. This gives $$p(\L_{F}^{-1} \circ \L_{F_0} (u))^2=(p(u) + p(\varepsilon(u)))^2 \leqslant p(u)^2 + p(\varepsilon(u))^2 + |2p(u)p(\varepsilon(u))|$$
and $|p(\varepsilon(u))|\leqslant \varepsilon(C)\|p\|_{\sigma^{F_0}}$. So we get, using Cauchy-Schwarz inequality,

\begin{align*}
\frac{\lVert p\rVert_{\sigma^F}^2}{ \lVert p\rVert_{\sigma^{F_0}}^2} &\leqslant 
 \frac{C^{2n +2}}{\lVert p\rVert_{\sigma^{F_0}}^2}\left((1+\varepsilon(C)) \lVert p\rVert_{\sigma^{F_0}}^2 + 2\left(\int_{S^{F_0}_xM} p(u)^2 \alpha^{F_0}(u)\right)^{1/2}\left(\int_{S^{F_0}_xM} p(\varepsilon(u))^2 \alpha^{F_0}(u)\right)^{1/2}\right)\\\\
&\leqslant  C^{2n +2} (1+2\varepsilon(C)).
\end{align*}
The same computations also gives the lower bound.
\end{proof}

\subsection{$\lambda_1$ and volume entropy} \label{subsec_lambda_and_entropy}

We prove here Theorem \ref{thm_lambda_1_for_riem_at_infinity}. Let $o$ be a base point on $M$. For any $x \in M$, define $\rho(x) := d(o,x)$, with $d$ the Finslerian distance.

\begin{claim}
 For any $s\in \R$ such that $2s > h$, we have $e^{-s\rho(\cdot)} \in L^2 \left( M,\Omega^F\right)$.
\end{claim}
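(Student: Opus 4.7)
The plan is the standard annular decomposition argument together with the definition of volume entropy. Let $o \in M$ be the base point, and set $B(R) := B^F(R) = \{x \in M : \rho(x) \leqslant R\}$. For each integer $k \geqslant 0$, define the annulus
\begin{equation*}
A_k := B(k+1) \setminus B(k),
\end{equation*}
so that $M = \bigsqcup_{k\geqslant 0} A_k$ (up to a measure-zero set). On $A_k$, we have $\rho(x) \geqslant k$, hence $e^{-2s\rho(x)} \leqslant e^{-2sk}$. Thus
\begin{equation*}
\int_M e^{-2s\rho(x)}\, \Omega^F \;\leqslant\; \sum_{k=0}^{\infty} e^{-2sk}\int_{A_k} \Omega^F \;\leqslant\; \sum_{k=0}^{\infty} e^{-2sk}\, \mathrm{vol}_{\Omega^F}(B(k+1)).
\end{equation*}

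Next, I would use the definition of volume entropy to bound the volumes of balls. Fix $\varepsilon > 0$ with $2s > h + \varepsilon$ (this is possible since $2s > h$). By definition of $\limsup$, there exists $K \in \N$ such that for all $k \geqslant K$,
\begin{equation*}
\mathrm{vol}_{\Omega^F}(B(k+1)) \;\leqslant\; e^{(h+\varepsilon/2)(k+1)}.
\end{equation*}
The finite sum $\sum_{k<K} e^{-2sk}\,\mathrm{vol}_{\Omega^F}(B(k+1))$ is obviously finite, and for the tail we get
\begin{equation*}
\sum_{k \geqslant K} e^{-2sk}\, e^{(h+\varepsilon/2)(k+1)} \;=\; e^{h+\varepsilon/2}\sum_{k\geqslant K} e^{-(2s - h - \varepsilon/2)k},
\end{equation*}
which is a convergent geometric series because $2s - h - \varepsilon/2 > \varepsilon/2 > 0$. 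Summing the two pieces yields $\int_M e^{-2s\rho}\,\Omega^F < \infty$, proving the claim.

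There is no real obstacle here: the only subtlety is to note that $\limsup$ only gives the bound on volumes for infinitely many $k$, but the standard $\varepsilon$-argument above (choosing $K$ past which the inequality holds) works because we have some slack between $2s$ and $h$. The finite initial segment of the sum contributes a finite constant since each ball of finite radius has finite $\Omega^F$-volume (the Holmes--Thompson volume form is a smooth volume form on $M$ and closed balls in a complete Finsler manifold are compact).
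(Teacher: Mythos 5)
Your proof is correct and is exactly the routine annular-decomposition argument that the paper regards as obvious (the paper simply says the claim ``is straightforward, just using the definition of the volume entropy''). One small slip in your closing remark: the upper bound $\frac{1}{R}\ln\vol(B(R)) \leqslant h+\varepsilon/2$ coming from $\limsup = h$ holds \emph{eventually} (for all $R$ past some threshold), not merely for infinitely many $R$ --- your argument already uses the correct eventual version, so this does not affect the proof, but the aside mischaracterizes the behavior of $\limsup$.
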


\begin{proof}
This fact is straightforward, just using the definition of the volume entropy.
\end{proof}

Choose $C>1$. As $F$ is asymptotically Riemannian, there exists a compact subset $K_C$ of $M$ and, for any $x\in M\smallsetminus K_C$, a scalar product $g_x$ on $T_xM$ such that, for any $v\in T_xM\smallsetminus\{0\}$,
\begin{equation*}
 C^{-1} \leqslant \frac{F(x,v)}{\sqrt{g_x(v,v)}} \leqslant C.
\end{equation*} 

Now let $R(C)>0$ such that the Finslerian metric ball $B^F(o,R(C)) \subset M$, of center $o$ and radius $R(C)$, contains $K_C$. Set 
\begin{equation*}
 f_C(x) := \begin{cases}
            e^{-s R(C)} \quad \text{if } x \in B^F(o,R(C)) \\
	    e^{-s \rho(x)} \quad \text{if } x \in M \smallsetminus B^F(o,R(C)).
           \end{cases}
\end{equation*}
We will start by giving an upper bound on the energy of $f_C$. Let $\lVert \cdot \rVert_{\sigma^F}$ be the norm given by the symbol of $F$. We have
\begin{equation*}
 E^F(f_C) = \frac{n}{\voleucl(\S^{n-1})} \int_{HM} \left(L_X \pi^{\ast} f \right)^2 \ada = \int_M \lVert df \rVert_{\sigma^F}^2 \Omega^F.
\end{equation*}
Hence, if we set $U_C:= M \smallsetminus B^F(o,R(C))$
\begin{equation*}
 E^F(f_C) = \int_{U_C} s^2\lVert d\rho_x \rVert_{\sigma^F_x}^2 e^{-2s \rho(x)} \Omega^F(x).
\end{equation*}

Now, by Proposition \ref{prop_control_of_sigma}, there exists $C'\geqslant 1$ such that, for any $x \in U_C$,
\begin{equation*}
 \lVert d\rho_x \rVert_{\sigma^F_x} \leqslant C' \lVert d\rho_x \rVert_{g_x^{\ast}}  \leqslant C C' \lVert d\rho_x \rVert_{F^{\ast}_x},
\end{equation*}
where the last inequality holds because a $C$-bi-Lipschitz control of two Finsler metrics implies a $C$-bi-Lipschitz control of their dual metrics (see for instance \cite{BarthelmeColbois_eigenvalue_control}).

By definition,
\begin{equation*}
 \lVert d\rho_x \rVert_{F^{\ast},x} = \sup \lbrace d\rho_x(v) \mid v \in T_xM, F(x,v)=1 \rbrace = 1,
\end{equation*}
because $\rho$ is the distance function of $F$.

So we have obtained that 
\begin{equation*}
  E^F(f_C) \leqslant s^2 C^2 C'^2 \int_{U_C} e^{-2s \rho(x)}\Omega^F(x).
\end{equation*}
We also have that 
\begin{equation*}
 \int_{M} f_C(x)^2 \Omega^F(x) = \int_{B^F(o,R(C))}  e^{-2s R(C)}\Omega^F(x) +  \int_{U_C} e^{-2s \rho(x)}\Omega^F(x) \geqslant  \int_{U_C} e^{-2s \rho(x)} \Omega^F(x).
\end{equation*}
Therefore,
\begin{equation*}
 R^F(f_C) = \frac{E^F(f_C)}{ \int_{M} f_C(x)^2\Omega^F(x)} \leqslant \frac{s^2 C^2 C'^2 \int_{U_C} e^{-2s \rho(x)}\Omega^F(x)}{\int_{U_C} e^{-2s \rho(x)}\Omega^F(x)} = s^2 C^2 C'^2.
\end{equation*}
This is true for any $s > h/2$ and any $C >1$. Since $\lim_{C\rightarrow 1} C' = 1$, we get
\begin{equation*}
 \lambda_1 = \inf_{f \in L^2(M, \Omega^F)} R^F(f) \leqslant \frac{h^2}{4}\, .\qedhere
\end{equation*}

This finishes the proof of Theorem \ref{thm_lambda_1_for_riem_at_infinity}.

\subsection{Dirichlet spectrum}

By a slight modification of the above proof, we can show that the same bound holds for the first Dirichlet eigenvalue of an asymptotically Riemannian manifold $M$ from which we removed a compact set $K$, \emph{provided} that we know that the function $e^{-h \rho(x)/2}$ is \emph{not} in $L^2(M)$. For a general (asymptotically) Riemannian manifold, this is probably not true. But it is true for example on the universal cover of a compact negatively curved Riemannian manifold: in this case, Margulis \cite{margulis, margulisbook} proved that, when $R$ goes to infinity, the area of the sphere of radius $R$ is equivalent to $C e^{hR}$, for some constant $C>0$, which allows to conclude. We will see below that this argument also applies to regular Hilbert geometries.

Recall that if $K$ is a compact sub-manifold of $M$ of the same dimension, the Dirichlet spectrum on $M\smallsetminus K$ is the spectrum of the operator $-\Delta^F$ seen on the space obtained by completion of $C^{\infty}_{0}(M\smallsetminus K)$, the space of smooth functions with compact support in $M\smallsetminus K$, under the norm given by the sum of the $L2$-norm and the energy. The first eigenvalue can still be obtained via the infimum of the Rayleigh quotient.

\begin{cor} \label{cor_bound_for_dirichlet_eigenvalue}
 Let $(M, F)$ be an asymptotically Riemannian manifold and $K$ a compact sub-manifold of $M$ of the same dimension. Let $\lambda_1(M\smallsetminus K)$ be the bottom of the Dirichlet spectrum of $-\Delta^F$ on $M\smallsetminus K$. Let $o \in M$ and $\rho(x):= d(o,x)$. If the function $x\longmapsto e^{-h \rho(x)/2}$ is not in $L^2(M)$, then
\begin{equation*}
 \lambda_1(M\smallsetminus K) \leqslant \frac{h^2}{4} \, .
\end{equation*}
\end{cor}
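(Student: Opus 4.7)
The plan is to adapt the argument of Theorem \ref{thm_lambda_1_for_riem_at_infinity}, modifying the test function so that it has compact support inside $M\smallsetminus K$, and then absorbing the extra error terms using the hypothesis. Fix $C>1$, let $K_C\supset K$ be the compact set from the asymptotic Riemannian definition, and fix a smooth cutoff $\phi_C\colon M\to[0,1]$ equal to $0$ on a neighborhood of $K_C$ (in particular on $K$) and equal to $1$ outside a slightly larger compact set $L_C$. Let $\chi_R(x)=\eta(\rho(x)/R)$ with $\eta\colon\R\to[0,1]$ smooth, $\eta\equiv 1$ on $[0,1]$ and $\eta\equiv 0$ on $[2,\infty)$. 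For $s>h/2$ and $R>0$, consider
\[
f_{s,R,C}(x) := \phi_C(x)\,\chi_R(x)\,e^{-s\rho(x)},
\]
which is compactly supported in $M\smallsetminus K$, hence admissible for the Dirichlet Rayleigh quotient.

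Let $R\to\infty$ with $s$ and $C$ fixed. Since $2s>h$, the function $e^{-2s\rho}$ is $\Omega^F$-integrable, so dominated convergence gives $\int_M f_{s,R,C}^2\,\Omega^F \to A(s,C):=\int_M\phi_C^2 e^{-2s\rho}\,\Omega^F<+\infty$, while the contributions involving $\chi_R$ in the energy also vanish (because $\|d\chi_R\|_{\sigma^F}=O(1/R)$ on the annulus $\{R\leq\rho\leq 2R\}$, which carries a vanishing $L^1$-mass). Expanding $df_{s,R,C}$ in the limit, the energy splits into a radial term $\int s^2\phi_C^2\|d\rho\|_{\sigma^F}^2 e^{-2s\rho}\,\Omega^F$, a cross term in $\phi_C\langle d\phi_C,d\rho\rangle$, and the boundary term $B(s,C):=\int_M\|d\phi_C\|_{\sigma^F}^2 e^{-2s\rho}\,\Omega^F$. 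Because $\phi_C\equiv 0$ on $K_C$, Proposition \ref{prop_control_of_sigma} applies on $\mathrm{supp}\,\phi_C$ and yields $\|d\rho\|_{\sigma^F}\leq CC'$ there (using $\|d\rho\|_{F^*}\leq 1$), so the radial term is bounded by $s^2(CC')^2 A(s,C)$; because $d\phi_C$ is compactly supported, $B(s,C)\leq B(C)$ uniformly in $s\in[h/2,h]$. Cauchy--Schwarz on the cross term then yields
\[
\lim_{R\to\infty} R^F(f_{s,R,C})\leq \bigl(sCC'+\sqrt{B(C)/A(s,C)}\bigr)^{2}.
\]

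Now the hypothesis enters. Since $\phi_C\equiv 1$ on $M\smallsetminus L_C$ and $L_C$ is compact,
\[
A(h/2,C)\geq \int_M e^{-h\rho}\,\Omega^F - \int_{L_C} e^{-h\rho}\,\Omega^F = +\infty,
\]
so by monotone convergence $A(s,C)\to +\infty$ as $s\searrow h/2$. Consequently $\sqrt{B(C)/A(s,C)}\to 0$ and $\lambda_1(M\smallsetminus K)\leq \tfrac{h^2}{4}(CC')^2$. Letting $C\to 1$ sends $C'\to 1$ by Proposition \ref{prop_control_of_sigma}, giving the desired bound $\lambda_1(M\smallsetminus K)\leq h^2/4$.

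The main delicate point is that the cutoff $\phi_C$ introduces two sources of error absent from Theorem \ref{thm_lambda_1_for_riem_at_infinity}: a loss in the $L^2$ norm on the compact region $\{\phi_C<1\}$, and the extra energy contribution $B(C)$ from $d\phi_C$. Both stay bounded uniformly while the denominator $A(s,C)$ is \emph{forced} to diverge by the hypothesis $e^{-h\rho/2}\notin L^2$, so both errors disappear in the limit; this is precisely where the hypothesis is indispensable. The radial cutoff $\chi_R$ is by contrast harmless because $e^{-2s\rho}$ is already integrable for $s>h/2$.
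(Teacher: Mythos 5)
Your proposal is correct and follows essentially the same strategy as the paper: truncate $e^{-s\rho}$ to vanish near $K$, bound the radial part of the energy by $s^2(CC')^2$ using Proposition \ref{prop_control_of_sigma} and $\|d\rho\|_{F^\ast}\leq 1$, and let $s\searrow h/2$ so that the $L^2$-norm diverges (by the hypothesis $e^{-h\rho/2}\notin L^2$) and absorbs the bounded cutoff error, before finally sending $C\to 1$. The only difference is technical packaging: you use a product cutoff $\phi_C\chi_R$ with an explicit $R\to\infty$ limit and a Cauchy--Schwarz expansion, whereas the paper uses a single interpolating test function whose energy contribution on the compact region is arranged to be at most $1$; both are valid and lead to the same estimate.
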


\begin{proof}
We use the same notations as above: Let $C >1$, and $R(C)$ be such that, outside of $B(o,R(C))$, $F$ is $C$-bi-Lipschitz to a Riemannian metric. By choosing a larger $R(C)$ if necessary, we can assume that $K \subset B(o,R(C))$.
Now, we just need to modify a tiny bit our test function $f_C$ from above so that it is zero on $\partial K$, and show that the Rayleigh quotient is still as close to $h^2/4$ as we want.\\
Let $f_C$ be a function such that
\begin{equation*}
 f_C(x) := \begin{cases}
            0 \quad \text{if } x \in \partial K\\
	    e^{-s \rho(x)} \quad \text{if } x \in M \smallsetminus B^F(o,R(C)),
           \end{cases}
\end{equation*}
and, furthermore,
\[
 \int_{B^F(o,R(C)) \smallsetminus K} \lVert df_C \rVert_{\sigma^F}^2\ \Omega^F \leqslant 1.
\]
Such a function exists if $R(C)$ is large enough.

Hence, if we set again $U_C:= M \smallsetminus B^F(o,R(C))$, we obtain as above that 
\begin{equation*}
 E^F(f_C) \leqslant 1 + s^2 C^2 C'^2 \int_{U_C} e^{-2s \rho(x)}\ \Omega^F(x).
\end{equation*}
Thus,
\begin{equation*}
 R^F(f_C) \leqslant \frac{1}{\int_M e^{-2s\rho(x)}\ \Omega^F(x)} + s^2 C^2 C'^2
\end{equation*}
Now, as $x\longmapsto e^{-h\rho(x)/2}$ is not in $L^2(M)$, $2s$ can be taken close enough to $h$, so that $\int_M e^{-2s\rho(x)} \Omega^F(x)$ arbitrarily large. Finally, as $C$ can be taken arbitrarily close to $1$ and $\lim_{C \rightarrow 1} C'(C) =1$, we obtain that
\[
 \inf R^F(f_C) \leq \frac{h^2}{4} \, , 
\]
which ends the proof.
\end{proof}

Using this, we can now prove the corresponding result about regular Hilbert geometries, which will be useful to compute the bottom of the essential spectrum in the next section. 

\begin{cor} \label{cor_dirichlet_spectrum_for_hilbert}
 Let $\left(\C, d_{\C} \right)$ be a regular Hilbert geometry and $K$ be a compact subset of $\C$ with smooth boundary. Let $\lambda_1(\C \smallsetminus K)$ be the bottom of the Dirichlet spectrum of $-\Delta^{F_{\C}}$ on $\C \smallsetminus K$. Then
\begin{equation*}
 \lambda_1(\C \smallsetminus K) \leqslant \frac{(n-1)^2}{4} \, .
\end{equation*} 
\end{cor}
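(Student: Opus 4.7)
The plan is to apply Corollary \ref{cor_bound_for_dirichlet_eigenvalue} directly to $M=\C$ with $F=F_{\C}$. Two of the three hypotheses are already in our hands: Corollary \ref{cor_suffit} says that a regular Hilbert geometry is asymptotically Riemannian, and by \cite{ColboisVerovic:hilbert_geometry} its volume entropy equals $h=n-1$. So everything reduces to verifying the third hypothesis, namely that $x\longmapsto e^{-(n-1)\rho(x)/2}$ does \emph{not} belong to $L^2(\C,\Omega^{F_{\C}})$, which is the Hilbert analogue of the Margulis estimate invoked just before the corollary.

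Using the co-area formula for the Holmes--Thompson volume, the $L^2$-norm we have to bound below can be rewritten as
\[
\int_{\C}e^{-(n-1)\rho(x)}\,\Omega^{F_{\C}}(x)=\int_0^{\infty}e^{-(n-1)R}A(R)\,dR,
\]
where $A(R)$ denotes the induced $(n-1)$-volume of the metric sphere $\partial B^{F_{\C}}(o,R)$. It therefore suffices to exhibit $c>0$ and $R_0>0$ such that $A(R)\geqslant c\,e^{(n-1)R}$ for all $R\geqslant R_0$; then the integrand stays bounded below by a positive constant at infinity and the integral diverges.

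To obtain this exponential lower bound, I would use Proposition \ref{prop_hilbert_are_bilipschitz_at_infinity} in a decisive way. Fix $C>1$ close to $1$ and the associated radius $R(C)$. For each $x\in\dC$, the hyperbolic metric $\h_x$ produced there has \emph{exactly the same geodesics} as $F_{\C}$ on $\C$, and it is $C$-bi-Lipschitz to $F_{\C}$ along $[ox)\smallsetminus B(o,R(C))$. Consequently, along this ray, the $F_{\C}$-sphere of radius $R$ and the $\h_x$-sphere of radius $R$ are tangent to the same radial direction and differ only in transverse density by a factor bounded by a power of $C$. A pure computation in the hyperbolic model $(\C,\h_x)$ gives that the induced area density on the $\h_x$-sphere of radius $R$ grows like $e^{(n-1)R}$ in every direction. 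Using the continuity of $x\longmapsto\h_x$ and the compactness of $\dC$, one covers the $F_{\C}$-sphere of radius $R$ (for $R$ large) by a finite collection of neighborhoods of rays $[ox_i)$ on which the bi-Lipschitz comparison with $\h_{x_i}$ holds, and adds up the local area lower bounds.

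The main obstacle is that naive bi-Lipschitz comparison only gives $A(R)\gtrsim e^{(n-1)R/C}$, which would not suffice. The key saving point, and the reason Proposition \ref{prop_hilbert_are_bilipschitz_at_infinity} is tailor-made here, is the equality of geodesics: the radial $F_{\C}$-distance on each ray is exactly the $\h_x$-distance, so the exponent $n-1$ is preserved and only the multiplicative constant depends on $C$. Once $A(R)\geqslant c\,e^{(n-1)R}$ is in place, Corollary \ref{cor_bound_for_dirichlet_eigenvalue} immediately yields the claimed bound $\lambda_1(\C\smallsetminus K)\leqslant (n-1)^2/4$.
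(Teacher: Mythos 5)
Your reduction is the same as the paper's: invoke Corollary \ref{cor_bound_for_dirichlet_eigenvalue}, cite Corollary \ref{cor_suffit} and the entropy $h=n-1$, and so reduce everything to showing that $e^{-(n-1)\rho/2}\notin L^2(\C,\Omega^{F_\C})$. Where you part ways with the paper is the volume estimate itself: the paper simply imports the quantitative lower bound $\int f\circ\rho\,\Omega^{F_\C}\geqslant C\int_0^\infty f(r)e^{(n-1)r}\,dr$ from \cite{ColboisVerovic:hilbert_geometry} (adjusting Busemann--Hausdorff to Holmes--Thompson by a dimensional constant) and is done in two lines. You instead try to re-derive this estimate from Proposition \ref{prop_hilbert_are_bilipschitz_at_infinity}, which is a legitimate and more self-contained route, and you correctly identify the pitfall that a naive bi-Lipschitz comparison would degrade the exponent.

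However, your resolution of that pitfall overstates what the proposition gives. You write that ``the radial $F_\C$-distance on each ray is exactly the $\h_x$-distance,'' but this is false: sharing geodesics means the ray $[ox)$ is a common geodesic, not that the arc-length parametrizations agree, and the proposition only provides a pointwise $C$-bi-Lipschitz bound outside $B(o,R(C))$. What \emph{is} true, and what the proof of the proposition actually shows, is that in the radial direction the ratio $F_\C(o_r,\cdot)/\h_x(o_r,\cdot)$ tends to $1$ as $r\to\infty$ (the function $f_{x,u}\to 1$ there for $u\in\R.ox$). Integrating the logarithm of this ratio along the ray gives that the two radial distances differ by a bounded additive error, not that they coincide. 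That is still enough to preserve the exponent $n-1$ (only the multiplicative constant in $A(R)\geqslant c\,e^{(n-1)R}$ is affected), but you also need some uniformity in $x$ of that additive error, which follows from continuity of $x\mapsto\h_x$ and compactness of $\dC$. So the structure of your argument survives, but the ``exactly equal'' step must be replaced by the correct asymptotic statement and its uniformization. As a matter of efficiency, the paper's route is the shorter one since the needed volume asymptotics already exist in the literature.
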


\begin{proof}
Let $o \in \C$ and $\rho(x) := \d(o,x)$. Thanks to corollary \ref{cor_bound_for_dirichlet_eigenvalue}, we only have to show that the function $x\longmapsto e^{-(n-1)\rho(x)/2}$ is not in $L^2(\C, \Omega^{F_{\C}})$.\\
In \cite{ColboisVerovic:hilbert_geometry}, the second and fourth authors gave a precise evaluation of the volume form of a regular Hilbert geometry. Their computations imply in particular that there exists some constant $C >0$ such that, for any measurable function $f:[0,+\infty)\longrightarrow \R$,
\[
 \int f\circ\rho\ \Omega^{F_C} \geqslant C \int_0^{+\infty}  f(r)e^{(n-1)R}\ dr.
\]
(See the proof of Theorem 3.1 in \cite{ColboisVerovic:hilbert_geometry}. The computations are done for the Busemann--Hausdorff volume, but the ratio between Busemann--Hausdorff and Holmes--Thompson volumes is uniformly bounded, with bounds depending only on the dimension (see for instance \cite{BuragoBuragoIvanov}), so their result applies.)\\
The conclusion is immediate:
\[
 \int e^{-(n-1)\rho(x)}\Omega^{F_C}(x) \geqslant C  \int_0^{+\infty} dr = +\infty \qedhere
\]
\end{proof}

\section{Bottom of the essential spectrum} \label{sec_bottom_of_essential_spectrum}

Coming back to regular Hilbert geometries, we will now study the essential spectrum and prove Theorem \ref{thmintro_essential_bottom}.
\begin{thm}\label{thm_essential_spectrum}
 Let $\left(\C, d_{\C} \right)$ be a regular Hilbert geometry. Let $\sigma_{\textrm{ess}}(F_{\C})$ be the essential spectrum of $-\Delta^{F_{\C}}$. Then
\begin{equation*}
 \inf \sigma_{\textrm{ess}}(F_{\C}) = \frac{(n-1)^2}{4} \,.
\end{equation*}
\end{thm}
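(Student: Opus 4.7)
The plan is to combine Persson's characterization of the essential spectrum with the tools developed in Sections \ref{sec_behavior_at_infinity_regular_hilbert} and \ref{sec_bottom_of_spectrum}. For the self-adjoint operator $-\Delta^{F_\C}$ on $L^2(\C,\Omega^{F_\C})$, Persson's formula reads
\[
 \inf\sigma_{\mathrm{ess}}(-\Delta^{F_\C})=\lim_{R\to\infty}\lambda_1^D(\C\smallsetminus B(o,R)),
\]
where $\lambda_1^D$ denotes the first Dirichlet eigenvalue and $o\in\C$ is a fixed base point. The upper bound $\inf\sigma_{\mathrm{ess}}\leq(n-1)^2/4$ is then immediate from Corollary \ref{cor_dirichlet_spectrum_for_hilbert}, so only the matching lower bound requires work.

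For the lower bound, I would fix $\varepsilon>0$ and choose $C>1$ sufficiently close to $1$. Corollary \ref{cor_result_for_essential_spectrum} provides $R(C)\geq 0$ and a continuous family $\{\h_x\}_{x\in\dC}$ of hyperbolic metrics such that $F_\C$ is $C$-bi-Lipschitz to $\h_x$ on an open set $\mathcal U_x(C)\subset\C$, and such that the collection $\{\mathcal U_x(C)\}_{x\in\dC}$ covers $\C\smallsetminus B(o,R(C))$. Compactness of $\dC$ together with the continuity of $x\longmapsto\h_x$ and of the map sending a ray to a tubular neighborhood of it yields a finite subcover $\{\mathcal U_{x_i}(C)\}_{i=1}^N$ of $\C\smallsetminus B(o,R_1)$ for some $R_1\geq R(C)$. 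The crucial step is the choice of a partition of unity $\{\chi_i^2\}_{i=1}^N$ subordinate to $\{\mathcal U_{x_i}(C)\}_{i=1}^N$: I would take each $\chi_i$ to be \emph{$o$-radially invariant}, i.e.\ the pull-back of a function on the visual sphere $\dC\simeq\S^{n-1}$ along $z\longmapsto z^+=[oz)\cap\dC$. In an asymptotically hyperbolic geometry, the $\sigma^{F_\C}$-norm of the differential of a radially invariant function at Finsler distance $r$ from $o$ decays like $(\sinh r)^{-1}$, so there exists $R'\geq R_1$ such that $\sum_i|d\chi_i|_{\sigma^{F_\C}}^2\leq\varepsilon$ pointwise on $\C\smallsetminus B(o,R')$.

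For any $f\in C^\infty_0(\C\smallsetminus B(o,R'))$ the IMS localization identity
\[
 E^{F_\C}(f)=\sum_{i=1}^N E^{F_\C}(\chi_i f)-\int_\C f^2\sum_i|d\chi_i|_{\sigma^{F_\C}}^2\,\Omega^{F_\C}
\]
combines on each piece with (i) the spectral bound $E^{\h_{x_i}}(\chi_i f)\geq \tfrac{(n-1)^2}{4}\int(\chi_i f)^2\,\Omega^{\h_{x_i}}$ on the model hyperbolic space $(\E_{x_i},\h_{x_i})$ to which $\chi_i f$ extends by zero, (ii) the Barthelm\'e--Colbois energy comparison theorem applied to the pair $(F_\C,\h_{x_i})$ on $\mathcal U_{x_i}(C)$, and (iii) Lemma \ref{lem_control_of_m_mu} controlling the volume ratio $\Omega^{F_\C}/\Omega^{\h_{x_i}}$. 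Since $F_\C$ and $\h_{x_i}$ are both reversible, the outcome is
\[
 E^{F_\C}(\chi_i f)\geq C^{-K-n}\,\frac{(n-1)^2}{4}\int(\chi_i f)^2\,\Omega^{F_\C}
\]
for a constant $K=K(n)$, and summing over $i$ with $\sum\chi_i^2=1$ gives
\[
 E^{F_\C}(f)\geq \left(C^{-K-n}\frac{(n-1)^2}{4}-\varepsilon\right)\int f^2\,\Omega^{F_\C}.
\]
This yields $\lambda_1^D(\C\smallsetminus B(o,R'))\geq C^{-K-n}\tfrac{(n-1)^2}{4}-\varepsilon$, and letting first $\varepsilon\to 0$ and then $C\to 1$ finishes the lower bound.

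The main obstacle is the IMS remainder term: a naive finite partition of unity would create an irreducible loss of order $\sum|d\chi_i|_{\sigma^{F_\C}}^2\leq K$ in the above identity, which is incompatible with the target inequality because the ambient geometry does not rescale. The resolution is precisely the choice of a radially invariant partition of unity, for which the tangential gradient on the horospheres decays at the hyperbolic rate $(\sinh r)^{-1}$ — a feature of the asymptotic geometry made quantitative by Corollary \ref{cor_result_for_essential_spectrum}.
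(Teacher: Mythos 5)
Your use of Persson's characterization for the upper bound is fine and is essentially the same route the paper takes (via its lemma identifying $\inf\sigma_{\mathrm{ess}}$ with the limit of exterior Dirichlet eigenvalues, plus Corollary \ref{cor_dirichlet_spectrum_for_hilbert}). The lower bound, however, has a genuine gap that the IMS scheme, as you set it up, cannot close. The sets $\mathcal U_x(C)$ supplied by Corollary \ref{cor_result_for_essential_spectrum} are contained in $\mathcal O_x=\E_x\cap\C$, where $\E_x$ is the osculating ball at $x$. For any boundary point $\xi\neq x$, the ray $[o\xi)$ exits $\E_x$ at a finite parameter (because $\partial\E_x$ meets $\dC$ only at $x$), so $\mathcal U_x(C)$ contains no cone of positive opening with vertex $o$ around $[ox)$; asymptotically it is a tube about the single ray, not a cone. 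Two things then fail simultaneously. First, since $\C\smallsetminus B(o,R_1)$ is not compact and the $\mathcal U_x(C)$ pinch towards their axes at infinity, there is no finite subcover $\{\mathcal U_{x_i}(C)\}_{i=1}^N$ of the full exterior region; the compactness of $\dC$ gives you a finite angular cover at any fixed radius, but not uniformly out to infinity. Second, and more fundamentally, a radially invariant cut-off $\chi_i$ is supported on a genuine cone $\{z:z^+\in V_i\}$, which is \emph{not} contained in any single $\mathcal U_{x_i}(C)$ at large radius, so the partition is not subordinate and the step ``$E^{\h_{x_i}}(\chi_i f)\geq\tfrac{(n-1)^2}{4}\int(\chi_i f)^2$ by extension by zero'' is not available. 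If you instead adapt the cut-offs to the tube structure so that they are subordinate, their gradients no longer decay like $(\sinh r)^{-1}$; the IMS remainder is then genuinely of order one, which is exactly the obstruction you flagged at the end.

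This tension is precisely what the paper's argument sidesteps by proving the lower bound through the weighted Cheeger inequality (Proposition \ref{prop_Cheeger_inequality}) rather than an energy localization. In Lemma \ref{lem_cheeger_vs_hyperbolic} the test domain $D$ is \emph{fixed and compact}, and the cones $K_{x_i}$ are chosen relative to that $D$ so that $K_{x_i}\cap D\subset\mathcal U_{x_i}(C)$; as $D$ grows the number and shape of the cones changes, but a Cheeger-type bound is a ratio of surface area to volume on each piece and is therefore insensitive to the number of pieces — there is no additive remainder to control. Combined with Lemmas \ref{lem_comparison_between_F_and_sigma} and \ref{lem_cheeger_a_poid_vs_cheeger} and the Donnelly--Li decomposition principle, this gives $4\inf\sigma_{\mathrm{ess}}\geq(C_2C_3)^{-2}(n-1)^2$ with $C_2,C_3\to1$ as $C\to1$. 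If you want to salvage an IMS-style argument you would need an asymptotically hyperbolic family of cut-offs with uniformly decaying gradients and supports genuinely inside the $\mathcal U_{x_i}(C)$, which the paper's asymptotic control does not provide; the Cheeger route avoids having to construct such a family at all.
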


So, if the $\lambda_1$ of a regular Hilbert geometry is strictly less than $(n-1)^2/4$, then it is a true eigenvalue, contrarily to the hyperbolic case where the $\lambda_1$ is just the infimum of the spectrum.

Note that, in the next section, we will construct examples of Hilbert geometries with eigenvalue strictly smaller than $(n-1)^2/4$. Indeed, we will construct examples with arbitrarily many, arbitrarily small eigenvalues.

To prove our result on the essential spectrum, we will use the Cheeger inequality for weighted Laplacians and control the Cheeger constant in regular Hilbert geometries using Corollaries \ref{cor_suffit} and \ref{cor_result_for_essential_spectrum}.

\subsection{Cheeger constant, weighted Laplacians and essential spectrum}

If $F$ is a Finsler metric on a manifold $M$, then (see \cite{moi:natural_finsler_laplace}) $\Delta^F$ is a weighted Laplacian with symbol $\sigma^F$ and symmetric with respect to the volume $\Omega^F$. Hence, we have the following lower bound for the first eigenvalue of $-\Delta^F$:
\begin{prop}[Cheeger Inequality] \label{prop_Cheeger_inequality}
 Let $M$ be a non-compact manifold and $F$ a Finsler metric on $M$. Let $d\vol^{\sigma^F}$ be the volume form of the Riemannian metric dual to $\sigma^F$, $d\mathrm{area}^{\sigma^F}$ the associated area element and $\mu \colon M \rightarrow \R$ the function such that $\Omega^F = \mu d\mathrm{vol}^{\sigma^F}$. Set 
\begin{equation*}
 \cheegerapoids(M) := \inf_{D} \left\{ \frac{\int_{\partial D} \mu(x) d\mathrm{area}^{\sigma^F} }{\int_{D} d\mathrm{vol}^{\sigma^F} } \right\},
\end{equation*}
where the infimum is taken over all compact domains $D$ with smooth boundary.

If $\lambda_1$ is the bottom of the spectrum of $-\Delta^F$ on $M$, then 
\begin{equation*}
 4 \lambda_1 \geqslant \cheegerapoids(M)^2.
\end{equation*}
\end{prop}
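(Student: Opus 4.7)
The plan is to adapt the classical Cheeger argument to the weighted-Laplacian setting. Since $\Delta^F$ has principal symbol $\sigma^F$ and is symmetric with respect to $\Omega^F = \mu\, d\vol^{\sigma^F}$, the Rayleigh quotient takes the form
$$
R^F(f) = \frac{\int_M \|df\|_{\sigma^F}^2\,\mu\, d\vol^{\sigma^F}}{\int_M f^2\,\mu\, d\vol^{\sigma^F}},
$$
and by the variational characterization of the bottom of the spectrum, it suffices to bound $R^F(f)$ from below for every $f\in C^\infty_0(M)$, which we may take nonnegative.

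First I would apply the Cauchy--Schwarz inequality with respect to the measure $\mu\, d\vol^{\sigma^F}$, which yields
$$
\left(\int_M f\,\|df\|_{\sigma^F}\,\mu\, d\vol^{\sigma^F}\right)^{2} \leqslant E^F(f)\cdot\int_M f^2\,\mu\, d\vol^{\sigma^F},
$$
reducing the question to a lower bound on the left-hand integral. I would then use the identity $\|d(f^2)\|_{\sigma^F} = 2f\|df\|_{\sigma^F}$ together with the (weighted) coarea formula for $\vol^{\sigma^F}$ to write
$$
2\int_M f\,\|df\|_{\sigma^F}\,\mu\, d\vol^{\sigma^F} = \int_0^{+\infty}\!\left(\int_{\{f^2 = t\}} \mu\, d\mathrm{area}^{\sigma^F}\right) dt.
$$

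By Sard's theorem, for almost every $t$ the level set $\{f^2 = t\}$ is a smooth hypersurface bounding the compact domain $D_t = \{f^2 > t\}$, so the definition of $\cheegerapoids(M)$ applies at $D_t$ and yields
$$
\int_{\partial D_t}\mu\, d\mathrm{area}^{\sigma^F}\;\geqslant\; \cheegerapoids(M)\cdot \vol^{\sigma^F}(D_t).
$$
Integrating this over $t$ and invoking the layer-cake identity $\int_0^{+\infty} \vol^{\sigma^F}(D_t)\, dt = \int_M f^2\, d\vol^{\sigma^F}$ provides a lower bound for the coarea integral in terms of an $L^2$-norm of $f$. Chaining this with the Cauchy--Schwarz bound and taking the infimum over test functions gives $4\lambda_1 \geqslant \cheegerapoids(M)^2$.

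The main obstacle is the bookkeeping of the weight $\mu$: Cauchy--Schwarz and coarea each distribute $\mu$ in a natural but different way, and one must check that these weights conspire with the normalizations in the definition of $\cheegerapoids(M)$ to deliver exactly $R^F(f) \geqslant \cheegerapoids(M)^2/4$. In the purely Riemannian case $\mu\equiv 1$ everything collapses to Cheeger's original proof; the content here is that the weighted coarea formula and the particular choice of weight $\mu$ in the numerator of $\cheegerapoids$ are compatible with the symmetric measure $\Omega^F$.
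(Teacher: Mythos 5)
Your strategy is the right one and is precisely what the paper means by ``the exact same as for the traditional Cheeger inequality'': Cauchy--Schwarz to split $E^F$, coarea for $f^2$ weighted by $\mu$, the isoperimetric hypothesis applied to the sublevel sets $D_t=\{f^2>t\}$, then layer-cake. But you explicitly deferred the one piece of bookkeeping that matters, and if you carry it out you will find that the argument does \emph{not} close with $\cheegerapoids$ as written in the statement. Tracing the constants in your three middle steps gives
\begin{equation*}
2\int_M f\,\lVert df\rVert_{\sigma^F}\,\mu\, d\vol^{\sigma^F}
\;=\;\int_0^{\infty}\!\left(\int_{\{f^2=t\}}\mu\, d\mathrm{area}^{\sigma^F}\right)dt
\;\geqslant\;\cheegerapoids(M)\int_0^{\infty}\vol^{\sigma^F}(D_t)\,dt
\;=\;\cheegerapoids(M)\int_M f^2\, d\vol^{\sigma^F},
\end{equation*}
where the final integral is against the \emph{unweighted} volume $d\vol^{\sigma^F}$, because that is what the denominator of $\cheegerapoids$ prescribes. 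Combining with your Cauchy--Schwarz bound yields
\begin{equation*}
R^F(f)\;\geqslant\;\frac{\cheegerapoids(M)^2}{4}\cdot
\left(\frac{\displaystyle\int_M f^2\, d\vol^{\sigma^F}}{\displaystyle\int_M f^2\,\mu\, d\vol^{\sigma^F}}\right)^2 ,
\end{equation*}
and the extra squared ratio is \emph{not} $\geqslant 1$ unless $\mu\leqslant 1$, which is not assumed. The weights do not ``conspire''; the chain genuinely fails as stated.

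The repair is that the denominator of $\cheegerapoids$ must be the same weighted reference measure $\Omega^F=\mu\, d\vol^{\sigma^F}$ that symmetrizes $\Delta^F$, i.e.\
\begin{equation*}
\cheegerapoids(M)=\inf_D\frac{\int_{\partial D}\mu\, d\mathrm{area}^{\sigma^F}}{\int_D \Omega^F}.
\end{equation*}
This is the standard definition for weighted (Bakry--\'Emery) Laplacians, and it is also what the paper's own Lemma~\ref{lem_cheeger_a_poid_vs_cheeger} implicitly uses: the constant $C_2=C_1^{2n}$ there accumulates one factor $C_1^{n}$ from the numerator and one from the denominator, which only makes sense if $\int_D\Omega^F$ sits downstairs. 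With that correction, your layer-cake step produces $\int_M f^2\,\Omega^F$ on the right, the extra ratio disappears, and the chain closes exactly to $R^F(f)\geqslant\cheegerapoids(M)^2/4$. So: right proof, but you needed to verify---rather than assume---that the weighting conventions match, and in doing so you would have caught the mismatch in the printed definition.
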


We do not provide the proof as it is the exact same as for the traditional Cheeger inequality (see for instance \cite{SchoenYau:lectures}). To study the essential spectrum, we also need the decomposition principle of Donnelly and Li, which states that the essential spectrum is independent of the behavior of the operator on any compact subset:
\begin{prop}[Decomposition Principle of Donnelly and Li \cite{DonnellyLi}] \label{prop_decomposition_principle}
 Let $M$ be a non-compact manifold and $F$ a Finsler metric on $M$. Let $M'$ be a compact sub-manifold of $M$ of same dimension. Then
\begin{equation*}
  \sigma_{\textrm{ess}}(M,F) = \sigma_{\textrm{ess}}(M \smallsetminus M',F).
\end{equation*}
In particular,
\begin{equation*}
 \cheegerapoids(M \smallsetminus M')^2 \leqslant 4 \inf \sigma_{\textrm{ess}}(F).
\end{equation*}
\end{prop}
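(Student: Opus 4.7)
The plan is to establish the spectral decomposition identity via the standard Donnelly--Li argument adapted to the Finsler Laplacian, then derive the Cheeger-type lower bound by combining the decomposition with Proposition~\ref{prop_Cheeger_inequality} applied on $M\smallsetminus M'$. Since, as recalled in Section~\ref{sec:operator}, $\Delta^F$ is a (smooth, uniformly locally elliptic) weighted Laplacian on $(M,\sigma^F,\Omega^F)$, the analytic tools used by Donnelly and Li, viz.\ local elliptic regularity and Rellich compactness, transfer verbatim. The proof uses Weyl's criterion: $\lambda\in\sigma_{\textrm{ess}}(-\Delta^F)$ if and only if there exists $(u_n)\subset\mathrm{Dom}(-\Delta^F)$ with $\|u_n\|_{L^2(\Omega^F)}=1$, $u_n\rightharpoonup 0$ weakly, and $\|(-\Delta^F-\lambda)u_n\|_{L^2}\to 0$.

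For the inclusion $\sigma_{\textrm{ess}}(M,F)\subset\sigma_{\textrm{ess}}(M\smallsetminus M',F)$, start from a Weyl sequence $(u_n)$ on $M$ and fix a smooth cutoff $\chi:M\to[0,1]$ equal to $0$ on an open neighborhood of $M'$ and to $1$ outside a slightly larger compact set $K$. Set $v_n:=\chi u_n$, which lies in the Dirichlet domain of $-\Delta^F$ on $M\smallsetminus M'$. The identity
\[
(-\Delta^F-\lambda)v_n \;=\; \chi(-\Delta^F-\lambda)u_n \;+\; [\Delta^F,\chi]u_n
\]
reduces the problem to showing $[\Delta^F,\chi]u_n\to 0$ in $L^2$. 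The commutator is a first-order differential operator supported on $K$, so it suffices to show $u_n\to 0$ strongly in $H^1(K)$. This follows from the local elliptic estimate $\|u_n\|_{H^2(K)}\leqslant C(\|u_n\|_{L^2(K')}+\|\Delta^F u_n\|_{L^2(K')})$ for $K\Subset K'$ (uniform boundedness of the right-hand side comes from $\|\Delta^F u_n\|_{L^2}\leqslant \lambda+1$ for $n$ large), combined with Rellich's theorem and the weak convergence $u_n\rightharpoonup 0$. The same local argument shows $\|u_n\|_{L^2(K)}\to 0$, hence $\|v_n\|_{L^2}\to 1$ and $v_n\rightharpoonup 0$. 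The reverse inclusion is easier: given a Weyl sequence $(u_n)$ on $M\smallsetminus M'$, one may by density replace each $u_n$ by a function in $C_c^\infty(M\smallsetminus M')$ without spoiling the three Weyl conditions, and then extend by zero to produce a Weyl sequence in $C_c^\infty(M)$ for $-\Delta^F$ on $M$.

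The Cheeger consequence is then immediate. Since the bottom of the spectrum is always at most the bottom of the essential spectrum, the first part of the proposition gives
\[
\inf\sigma_{\textrm{ess}}(M,F)\;=\;\inf\sigma_{\textrm{ess}}(M\smallsetminus M',F)\;\geqslant\;\lambda_1(M\smallsetminus M'),
\]
where $\lambda_1(M\smallsetminus M')$ denotes the bottom of the Dirichlet spectrum of $-\Delta^F$ on $M\smallsetminus M'$. Proposition~\ref{prop_Cheeger_inequality}, applied on the non-compact manifold-with-boundary $M\smallsetminus M'$ (the standard co-area/test-function proof is unchanged when restricted to $C_c^\infty(M\smallsetminus M')$), yields $4\lambda_1(M\smallsetminus M')\geqslant \cheegerapoids(M\smallsetminus M')^2$, and chaining inequalities gives $\cheegerapoids(M\smallsetminus M')^2\leqslant 4\inf\sigma_{\textrm{ess}}(F)$.

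The main obstacle is the commutator control $[\Delta^F,\chi]u_n\to 0$ in $L^2$, or equivalently, strong $H^1_{\mathrm{loc}}$ convergence of the Weyl sequence. The ellipticity of $\Delta^F$ is crucial here, and while this is standard for Riemannian Laplacians, one must verify that the description of $\Delta^F$ as a weighted Laplacian with smooth, uniformly elliptic symbol $\sigma^F$ and smooth weight (recalled at the end of Section~\ref{sec:operator}) is indeed enough to invoke interior $H^2$ estimates, which is the case since all coefficients are $C^\infty$ in any fixed chart.
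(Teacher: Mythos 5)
The paper offers no proof of this proposition: it is quoted as the Decomposition Principle of Donnelly--Li and used as a black box, so you are reconstructing the cited argument rather than paralleling anything in the text. Your forward inclusion $\sigma_{\textrm{ess}}(M,F)\subset\sigma_{\textrm{ess}}(M\smallsetminus M',F)$ is correct and complete: the cutoff--commutator identity, the interior $H^2$ estimate, Rellich compactness and the weak convergence $u_n\rightharpoonup 0$ do give $\|u_n\|_{H^1(K)}\to 0$, hence both $[\Delta^F,\chi]u_n\to 0$ and $\|\chi u_n\|\to 1$. This inclusion is exactly what yields $\inf\sigma_{\textrm{ess}}(M\smallsetminus M')\leqslant\inf\sigma_{\textrm{ess}}(M)$, and together with Proposition~\ref{prop_Cheeger_inequality} on $M\smallsetminus M'$ it gives the ``in particular'' statement; so everything the paper actually uses is justified by your argument. (One cosmetic point: the coefficients of $\Delta^F$ are only as regular as $F$, which is merely $C^2$, not $C^\infty$; but interior $H^2$ estimates hold for continuous principal part and bounded lower-order terms, so this does not matter.)

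The reverse inclusion, however, contains a genuine gap. You claim that a Weyl sequence $(u_n)$ in the Dirichlet domain on $M\smallsetminus M'$ can ``by density'' be replaced by functions in $C^\infty_c(M\smallsetminus M')$ and then extended by zero. But $C^\infty_c(M\smallsetminus M')$ is a \emph{form} core, not an \emph{operator} core, for the Dirichlet Laplacian: its closure in the graph norm is (locally) $H^2_0$, i.e.\ functions whose normal derivative also vanishes on $\partial M'$, whereas generic elements of the Dirichlet domain (e.g.\ Dirichlet eigenfunctions on an interval) have nonzero normal derivative and cannot be graph-norm approximated from $C^\infty_c$. Relatedly, the naive extension by zero of an element of the Dirichlet domain is not in $\mathrm{Dom}(-\Delta^F_M)$, since its distributional Laplacian acquires a surface layer on $\partial M'$. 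The repair is standard and symmetric to your first step: multiply $u_n$ by a cutoff vanishing near $\partial M'$, and control the commutator term by showing $\|u_n\|_{H^1}\to 0$ on a collar of $\partial M'$, which now requires elliptic regularity \emph{up to the boundary} for the Dirichlet problem (available because $\partial M'$ is smooth), not just interior estimates. With that replacement your proof of the full equality is complete.
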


We also have the following known result. As we did not find any reference, we provide a proof.
\begin{lem}
Let $\{M'_i\}$ be an increasing family of compact sub-manifolds of $M$ of the same dimension, such that $\cup_i M'_i = M$. Then
$$\inf \sigma_{\textrm{ess}}(M,F) = \lim_{i\to \infty} \l_1(M \smallsetminus M'_i,F),$$
where $\l_1(M \smallsetminus M'_i,F)$ denotes the Dirichlet spectrum of $M \smallsetminus M'_i$.
\end{lem}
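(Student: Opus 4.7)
The plan is to prove this as a version of Persson's theorem, combining the decomposition principle with a spectral projection argument. First I would observe that $i \mapsto \lambda_1(M\smallsetminus M'_i, F)$ is non-decreasing: the inclusion $C^\infty_0(M\smallsetminus M'_j)\subset C^\infty_0(M\smallsetminus M'_i)$ for $j \geq i$ combined with the variational characterization of $\lambda_1$ via the Rayleigh quotient $R^F$ gives the monotonicity. Denote the limit by $\lambda_\infty$. The easy inequality $\lambda_\infty \leq \inf\sigma_{\textrm{ess}}(M,F)$ is then immediate from Proposition~\ref{prop_decomposition_principle}: for each $i$, $\lambda_1(M\smallsetminus M'_i,F) \leq \inf\sigma(M\smallsetminus M'_i,F) \leq \inf\sigma_{\textrm{ess}}(M\smallsetminus M'_i,F) = \inf\sigma_{\textrm{ess}}(M,F)$, and one passes to the limit.

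For the reverse inequality $\inf\sigma_{\textrm{ess}}(M,F) \leq \lambda_\infty$, I would use a Weyl-sequence / spectral-projector argument. For each $i$, pick $\phi_i \in C^\infty_0(M\smallsetminus M'_i)$ with $\|\phi_i\|_{L^2(M,\Omega^F)}=1$ and
\[
R^F(\phi_i) \leq \lambda_1(M\smallsetminus M'_i,F) + 1/i,
\]
so that $R^F(\phi_i) \to \lambda_\infty$. Since $\{M'_i\}$ exhausts $M$, the support of $\phi_i$ leaves every fixed compact set eventually; together with the uniform bound $\|\phi_i\|=1$ and the density of compactly supported functions in $L^2(M,\Omega^F)$, this yields $\phi_i \rightharpoonup 0$ weakly. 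Now suppose for contradiction that $\lambda_\infty < \mu < \inf\sigma_{\textrm{ess}}(M,F)$, and let $E = \mathbf{1}_{(-\infty,\mu]}(-\Delta^F)$ be the spectral projector associated with the Friedrichs self-adjoint extension of $-\Delta^F$ on $L^2(M,\Omega^F)$. By the assumption on $\mu$, the range of $E$ is finite dimensional, so $E$ is compact on $L^2(M,\Omega^F)$ and therefore $E\phi_i \to 0$ strongly. Using the spectral decomposition and the positivity $-\Delta^F \geq 0$ (guaranteed by Theorem~\ref{thmintro_bottom}),
\[
R^F(\phi_i) \;=\; \int t\, d\langle E_t\phi_i,\phi_i\rangle \;\geq\; \mu\,\|(I-E)\phi_i\|^2 \;=\; \mu\bigl(1-\|E\phi_i\|^2\bigr) \;\longrightarrow\; \mu,
\]
contradicting $R^F(\phi_i) \to \lambda_\infty < \mu$.

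The only delicate point in the plan is verifying that $\phi_i \rightharpoonup 0$, which is routine but should be spelled out using density. The spectral projection step relies only on general Hilbert-space facts for positive self-adjoint operators and goes through verbatim as in the Riemannian setting, since $\Omega^F$ is a smooth positive density and $-\Delta^F$ is symmetric on $C^\infty_0(M)$ with respect to it; no new Finsler-specific ingredient is needed beyond the decomposition principle already cited.
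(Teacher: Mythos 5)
Your argument is correct, but it takes a genuinely different route from the paper's. For the inequality $\inf\sigma_{\textrm{ess}} \leqslant \lambda_\infty$, the paper constructs an explicit Weyl singular sequence at $\lambda_\infty$: it first reduces to the case $\lambda_1^i < \inf\sigma_{\textrm{ess}}$, uses this to conclude that each $\lambda_1^i$ is a genuine Dirichlet eigenvalue of finite multiplicity, approximates the corresponding eigenfunction by a compactly supported $f_i$, and passes to a subsequence so that the $f_i$ have pairwise disjoint supports, giving $\lVert(-\Delta^F-\lambda_\infty) f_i\rVert \leqslant 2\eps\lVert f_i\rVert$. You instead run a Persson-type argument: pick almost-minimizers $\phi_i$ of the Dirichlet Rayleigh quotient, show $\phi_i \rightharpoonup 0$ because the supports escape every compact set, and derive a contradiction from the compactness of the spectral projector $E = \mathbf{1}_{(-\infty,\mu]}(-\Delta^F)$ for $\lambda_\infty < \mu < \inf\sigma_{\textrm{ess}}$ via $R^F(\phi_i) \geqslant \mu\bigl(1-\lVert E\phi_i\rVert^2\bigr)$. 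Your approach avoids having to invoke that $\lambda_1^i$ is achieved by an eigenfunction and works entirely at the level of almost-minimizers, which is arguably cleaner and more robust; the paper's approach is more hands-on and produces an explicit singular sequence, which is closer to the formulation of Weyl's criterion. Both use the decomposition principle (Proposition~\ref{prop_decomposition_principle}) for the easy inequality, and both rely on monotonicity of $i\mapsto\lambda_1(M\smallsetminus M'_i,F)$. One small inaccuracy in your write-up: the non-negativity $-\Delta^F\geqslant 0$ is not a consequence of Theorem~\ref{thmintro_bottom}, which concerns Hilbert geometries specifically, whereas this lemma is for a general $(M,F)$; it is instead immediate from the fact that the associated quadratic form is the energy $E^F(f)\geqslant 0$, so the Rayleigh quotient is always non-negative.
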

\begin{proof}
Let us write $\lambda_1^i := \l_1(M \smallsetminus M'_i,F) $.
By the Decomposition principle, we have that, for all $i$, $\lambda_1^i \leqslant \inf \sigma_{\textrm{ess}}(M,F)$. We suppose that $\lambda_1^i < \inf \sigma_{\textrm{ess}}(M,F)$, otherwise we are done. Let $\lambda = \lim_{i\to \infty} \lambda_1^i $, which exists because, as $\{M'_i\}$ is increasing, the sequence $\{\lambda_1^i\}$ is nondecreasing. To prove that $\lambda$ is in the essential spectrum, we are going to show that, for any $\eps>0$, there exists a family of functions $f_i \in L^2(M)$, with disjoint supports, such that
$$
\lVert -\Delta^F f_i - \lambda f_i \rVert \leqslant \eps \lVert f_i \rVert,
$$
where $\lVert \cdot \rVert$ denotes the $L^2$-norm with respect to $\Omega^F$.

Let $\eps>0$. As $\lambda_1^i$ is an eigenvalue with finite multiplicity of $-\Delta^F$ on $M \smallsetminus M'_i$, we can find a function $f_i \in L^2(M \smallsetminus M'_i)$ with compact support such that
$$
\lVert -\Delta^F f_i - \lambda_1^i f_i \rVert \leqslant \eps \lVert f_i \rVert.
$$
Up to taking a subsequence, we can suppose that $\text{supp} f_i \subset M'_{i+1}$, so that $\text{supp} f_i \subset M'_{i+1} \smallsetminus M'_i$. Hence, for any $i \neq j$, we have $\text{supp} f_i \cap \text{supp} f_j = \emptyset$. So, for $i$ large enough,
\begin{equation*}
 \lVert -\Delta^F f_i - \lambda f_i \rVert \leqslant \lVert -\Delta^F f_i - \lambda_1^i f_i \rVert  + |\lambda - \lambda_1^i| \lVert f_i \rVert \leqslant 2\eps \lVert f_i \rVert. \qedhere
\end{equation*}
\end{proof}

This gives a part of Theorem \ref{thm_essential_spectrum}.

\begin{cor}
Let $\left(\C, d_{\C} \right)$ be a regular Hilbert geometry. Then
$$\inf \sigma_{\textrm{ess}}(F_{\C}) \leqslant (n-1)^2/4.$$
\end{cor}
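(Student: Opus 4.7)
The plan is to combine the two facts just established: Corollary \ref{cor_dirichlet_spectrum_for_hilbert} gives a uniform upper bound on the Dirichlet first eigenvalue of the complement of any compact smooth domain, and the preceding Lemma identifies the infimum of the essential spectrum as the limit of such Dirichlet eigenvalues along an exhausting sequence.

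First I would fix an exhausting increasing sequence $\{K_i\}$ of compact sub-manifolds of $\C$ with smooth boundary and with $\bigcup_i K_i = \C$ (for instance metric balls $\overline{B(o,R_i)}$ with $R_i \to +\infty$, which are smooth since $F_\C$ is a regular Finsler metric). By Corollary \ref{cor_dirichlet_spectrum_for_hilbert}, for every $i$,
\begin{equation*}
\lambda_1(\C \smallsetminus K_i) \leqslant \frac{(n-1)^2}{4}.
\end{equation*}
Next, applying the lemma that precedes this corollary to $M = \C$, $F = F_\C$, and the family $\{K_i\}$, I obtain
\begin{equation*}
\inf \sigma_{\textrm{ess}}(F_\C) = \lim_{i \to \infty} \lambda_1(\C \smallsetminus K_i) \leqslant \frac{(n-1)^2}{4}.
\end{equation*}
This is exactly the claimed inequality.

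There is essentially no obstacle here, since the two ingredients do all the work: the upper bound on the Dirichlet spectrum of exterior domains in a regular Hilbert geometry (which already packaged the entropy argument and the non-$L^2$ estimate for $e^{-h\rho/2}$), together with the standard characterization of the bottom of the essential spectrum as a limit of Dirichlet first eigenvalues over an exhaustion. The only minor technical point worth checking is that one may indeed choose an exhaustion by compact sub-manifolds with smooth boundary in $\C$, which is immediate because $\C$ is an open manifold and the Finsler metric is regular.
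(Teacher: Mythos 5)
Your proof is correct and matches the paper's own argument almost word for word: fix an exhaustion by closed metric balls, invoke Corollary \ref{cor_dirichlet_spectrum_for_hilbert} for the uniform bound $\lambda_1(\C\smallsetminus K_i)\leqslant(n-1)^2/4$, and conclude via the preceding lemma identifying $\inf\sigma_{\mathrm{ess}}$ as the limit of these Dirichlet eigenvalues. Nothing is missing.
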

\begin{proof}
Pick $o\in\C$. Then Corollary \ref{cor_dirichlet_spectrum_for_hilbert} gives that, for any $i\geqslant 1$, $\l_1(\C \smallsetminus \overline{B(o,i)}) \leqslant (n-1)^2/4$. The previous lemma allows us to conclude.
\end{proof}

\subsection{Essential spectrum of regular Hilbert geometries}

The next few lemmas will allow us to prove the inequality $\inf \sigma_{\textrm{ess}}(F_{\C}) \geqslant (n-1)^2/4$ and thus conclude the proof of Theorem \ref{thm_essential_spectrum}.
Denote by $\sigma$ the symbol of $-\Delta^{F_{\C}}$, by $\cheegerapoids$ the weighted Cheeger constant associated with $\sigma$ and $\Omega^{F_{\C}}$ and by $\cheeger$ the traditional Cheeger constant for the Riemannian metric dual to $\sigma$.

Let $o \in \C$ be fixed and $K$ a relatively compact open subset of $\C$.

\begin{lem}\label{lem_comparison_between_F_and_sigma}
 For any $C>1$, there exists a constant $R = R(C) >0$ and a constant $C_1 = C_1(C) \geqslant1$ such that, on $\C \smallsetminus B(o,R)$, we have:
\begin{equation*}
 C_1^{-1} \leqslant \frac{\sigma^{\ast}}{F^2} \leqslant C_1.
\end{equation*}
Furthermore, $C_1$ tends to $1$ as $C$ tends to $1$.
\end{lem}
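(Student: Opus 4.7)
The plan is to combine the asymptotic Riemannian structure of regular Hilbert geometries from Corollary \ref{cor_suffit} with the pointwise symbol control of Proposition \ref{prop_control_of_sigma}, bridged by Legendre duality between $TM$ and $T^*M$. The underlying idea is that $\F$ is close to some Riemannian $\sqrt{g}$ far from $o$, and for any Finsler metric close to a Riemannian one, Proposition \ref{prop_control_of_sigma} already guarantees that the symbol is close to the corresponding dual metric; dualizing, $\sigma^*$ becomes close to $g$, hence close to $\F^2$.

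Concretely, fix $C>1$. First, I would apply Corollary \ref{cor_suffit} to obtain a radius $R = R(C)>0$ and a continuous Riemannian metric $g$ on $\C \smallsetminus B(o,R)$ satisfying $C^{-2} g \leqslant \F^2 \leqslant C^2 g$ as quadratic forms on tangent vectors. Next, at each point $x \in \C \smallsetminus B(o,R)$, I would apply Proposition \ref{prop_control_of_sigma} to the pair $(\F(x,\cdot),\sqrt{g_x(\cdot,\cdot)})$ to obtain a constant $C' = C'(C,n)$ such that $(C')^{-1} g^*_x \leqslant \sigma_x \leqslant C' g^*_x$ on $T^*_xM$, with $C'(C,n) \to 1$ as $C \to 1$. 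Dualizing these positive definite quadratic forms reverses the inequalities, producing the symmetric estimate $(C')^{-1} g_x \leqslant \sigma^*_x \leqslant C' g_x$ on $T_xM$. Combining this with the earlier bi-Lipschitz control yields $(C'C^2)^{-1} \F^2 \leqslant \sigma^* \leqslant C'C^2 \F^2$ on $\C \smallsetminus B(o,R)$, and I would set $C_1 := C'C^2$.

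No serious obstacle is expected here. The delicate point --- and the reason the authors took care to strengthen Proposition \ref{prop_control_of_sigma} with a limit statement --- is that we need not merely the existence of a constant but one converging to $1$; once that is granted, $C_1 \to 1$ as $C \to 1$ follows at once from $C^2 \to 1$. The whole argument is bookkeeping with three bi-Lipschitz controls (between $\F^2$ and $g$, between $\sigma$ and $g^*$, and between $\sigma^*$ and $g$), the only substantive verification being that dualizing positive definite quadratic forms preserves bi-Lipschitz estimates with the same constants.
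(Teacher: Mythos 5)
Your argument is exactly the paper's: invoke Corollary \ref{cor_suffit} for the asymptotic Riemannian control, feed it pointwise into Proposition \ref{prop_control_of_sigma}, dualize the symbol estimate, and chain the bi-Lipschitz constants, using the limit statement in Proposition \ref{prop_control_of_sigma} to conclude $C_1\to 1$. The only cosmetic difference is the exponent on $C$ (you carry $C^2$ where the paper silently renormalizes), which does not affect the limit; your dualization step and the observation that it preserves the constant are correct.
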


\begin{proof}
Let $C >1$. According to Corollary \ref{cor_suffit}, there exists $R= R(C)>0$ and a Riemannian metric $g$ on $\C\smallsetminus B(o,R)$ such that $C^{-1} g \leqslant F^2_{\C} \leqslant C g$. By Proposition \ref{prop_control_of_sigma}, there exists a constant $C'=C'(C,n)>1$ such that $(C'C)^{-1} F^2_{\C}  \leqslant \sigma^{\ast} \leqslant C'C F^2_{\C}$ on all of $\C \smallsetminus B(o,R) $. Finally, still according to Proposition \ref{prop_control_of_sigma}, $C'C$ tends to $1$ when $C$ tends to $1$, so we can set $C_1=C'C$.
\end{proof}

\begin{lem}\label{lem_cheeger_a_poid_vs_cheeger}
 For any $C>1$, there exists a constant $R = R(C) >0$ and a constant $C_2 = C_2(C) >1$ such that 
\begin{equation*}
 \cheegerapoids \left( \C \smallsetminus B(o,R) \right) \geqslant C_2^{-1}\cheeger \left( \C \smallsetminus B(o,R) \right).
\end{equation*}
Furthermore, $C_2$ tends to $1$ as $C$ tends to $1$.
\end{lem}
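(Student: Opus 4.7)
The plan is to reduce the comparison of Cheeger constants to a pointwise estimate on the weight function $\mu$, which by definition is the only thing distinguishing the two ratios. In the purely Riemannian case $F = \sqrt{g}$ one has $\sigma^F = g^*$ and $\Omega^F = d\mathrm{vol}^g$, hence $\mu \equiv 1$; since $F_{\C}$ is asymptotically Riemannian by Corollary \ref{cor_suffit}, one should be able to show quantitatively that $\mu \to 1$ at infinity, and the Cheeger inequality $\cheegerapoids \geqslant C_2^{-1}\cheeger$ will then drop out immediately.

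I would execute the pointwise estimate on $\mu$ in the following way. Fix $C > 1$. Corollary \ref{cor_suffit} supplies $R = R(C) > 0$ and a Riemannian metric $g$ on $\C \smallsetminus B(o,R)$ with $C^{-1}\sqrt{g} \leqslant F_{\C} \leqslant C\sqrt{g}$. Lemma \ref{lem_control_of_m_mu} applied to the pair $(F_{\C}, \sqrt{g})$ then yields the volume comparison
$C^{-n}\,d\mathrm{vol}^g \leqslant \Omega^{F_{\C}} \leqslant C^n\,d\mathrm{vol}^g$.
On the symbol side, Proposition \ref{prop_control_of_sigma} provides a constant $C' = C'(C,n)$, with $C' \to 1$ as $C \to 1$, such that $(C')^{-1} g^* \leqslant \sigma^{F_{\C}} \leqslant C' g^*$ as quadratic forms on $T^*(\C \smallsetminus B(o,R))$. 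Taking pointwise inverses (which reverses the L\"owner order on positive definite symmetric forms) gives the identical bound $(C')^{-1} g \leqslant (\sigma^{F_{\C}})^* \leqslant C' g$ for the Riemannian metric dual to $\sigma^{F_{\C}}$, and hence $(C')^{-n/2}\,d\mathrm{vol}^g \leqslant d\mathrm{vol}^{\sigma^{F_{\C}}} \leqslant (C')^{n/2}\,d\mathrm{vol}^g$. Dividing the two chains yields $C_0^{-1} \leqslant \mu \leqslant C_0$ on $\C \smallsetminus B(o,R)$ with $C_0 := C^n (C')^{n/2}$, and $C_0 \to 1$ as $C \to 1$.

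Given this pointwise bound, the conclusion is a one-line comparison. For any compact domain $D \subset \C \smallsetminus B(o,R)$ with smooth boundary, the inequality $\mu \geqslant C_0^{-1}$ on $\partial D$ gives
$$\int_{\partial D} \mu\,d\mathrm{area}^{\sigma^{F_{\C}}} \geqslant C_0^{-1} \int_{\partial D} d\mathrm{area}^{\sigma^{F_{\C}}}.$$
Dividing by the common denominator $\int_D d\mathrm{vol}^{\sigma^{F_{\C}}}$ and taking the infimum over $D$ produces $\cheegerapoids(\C \smallsetminus B(o,R)) \geqslant C_0^{-1}\,\cheeger(\C \smallsetminus B(o,R))$, so one sets $C_2 := C_0$.

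The substantive step is Step 1, and the delicate ingredient inside it is that the constant $C' = C'(C,n)$ from Proposition \ref{prop_control_of_sigma} must genuinely tend to $1$ as $C \to 1$; without this refinement the combined constant $C_0 = C^n(C')^{n/2}$ would not converge to $1$ and the conclusion $C_2 \to 1$ would fail. Fortunately, this is precisely the strengthened form of Proposition \ref{prop_control_of_sigma} proved in the preliminaries, so the whole argument ultimately amounts to a careful bookkeeping of constants once the asymptotic bi-Lipschitz approximation of Corollary \ref{cor_suffit} is invoked.
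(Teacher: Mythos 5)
Your proof is correct and takes essentially the same route as the paper: reduce the Cheeger comparison to a pointwise bound on the weight $\mu = \Omega^{F_{\C}}/d\mathrm{vol}^{\sigma^{F_{\C}}}$ obtained from the asymptotic bi-Lipschitz control (Corollary~\ref{cor_suffit}) and the symbol estimate (Proposition~\ref{prop_control_of_sigma}), which the paper packages as Lemma~\ref{lem_comparison_between_F_and_sigma} followed by Lemma~\ref{lem_control_of_m_mu}. One caveat: you only invoke the lower bound $\mu\geqslant C_0^{-1}$, consistent with the displayed definition of $\cheegerapoids$ having $d\mathrm{vol}^{\sigma^F}$ in the denominator, but the paper's own proof compares $\int_D \Omega^{F_{\C}}$ with $\int_D d\mathrm{vol}^{\sigma}$ in the denominator, showing that the weight was meant to appear there too; if so, you should additionally use your upper bound $\mu\leqslant C_0$ and take $C_2=C_0^2$, which still tends to $1$ as $C\to 1$.
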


\begin{proof}
 Let $C >1$. By Lemma \ref{lem_comparison_between_F_and_sigma}, there exist constants $R= R(C)>0$ and $C_1\geqslant 1$ such that,  on $\C \smallsetminus B(o,R) $, we have
$C_1^{-1} F^2_{\C}  \leqslant \sigma^{\ast} \leqslant C_1 F^2_{\C}$. Let $\mu \colon M \rightarrow \R$ be the function such that $\Omega^F = \mu d\vol^{\sigma^F}$. By Lemma \ref{lem_control_of_m_mu}, we have $C_1^{-n} \leqslant \mu(x) \leqslant C_1^n$ for any $x \in\C \smallsetminus B(o,R)$. So we get that, for any compact domain $D$ in $\C \smallsetminus B(o,R)$ with smooth boundary,
\begin{align*}
 C_1^{-n} \int_{\partial D} d\mathrm{area}^{\sigma} &\leqslant \int_{\partial D} \mu(x) d\mathrm{area}^{\sigma^F} \leqslant C_1^n \int_{\partial D} \mu(x) d\mathrm{area}^{\sigma^F} \\
 C_1^{-n} \int_{D} d\mathrm{vol}^{\sigma} &\leqslant \int_{D} \Omega^{F_{\C}} \leqslant C_1^n \int_{D} d\mathrm{vol}^{\sigma}.
\end{align*}
Therefore, setting $C_2 = C_1 ^{2n}$ gives the claim.
\end{proof}

\begin{lem}\label{lem_cheeger_vs_hyperbolic}
 For any $C>1$, there exists a constant $R = R(C) >0$ and a constant $C_3 = C_3(C) \geqslant 1$ such that 
\begin{equation*}
 \cheeger \left( \C \smallsetminus B(o,R) \right) \geqslant C_3^{-1} (n-1).
\end{equation*}
Furthermore, $C_3$ tends to $1$ as $C$ tends to $1$.
\end{lem}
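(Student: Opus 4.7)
My plan is to adapt the classical Gauss--Green proof that the Cheeger constant of hyperbolic $n$-space equals $n-1$, controlling errors via the pointwise bi-Lipschitz comparison between the Riemannian metric $g$ dual to $\sigma$ and the local hyperbolic models $\h_x$.

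Fix $C>1$. Combining Lemma \ref{lem_comparison_between_F_and_sigma} with Corollary \ref{cor_result_for_essential_spectrum}, I can find $R=R(C)$ and $K=K(C)\to 1$ as $C\to 1$ such that $\C\smallsetminus B(o,R)$ is covered by the open sets $\mathcal{U}_x(C)$, on each of which $g$ is $K$-bi-Lipschitz (as a quadratic form) to the hyperbolic metric $\h_x$. Set $\rho(z):=\d(o,z)$ and $Y:=\nabla_g\rho$; since $|d\rho|_{F_\C^{\ast}}\equiv 1$, Lemma \ref{lem_comparison_between_F_and_sigma} gives $|Y|_g\leqslant C_1^{1/2}$ with $C_1\to 1$ as $C\to 1$.

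The key step is the divergence bound $\Delta_g\rho\geqslant(n-1)(1-\eps)$ on $\C\smallsetminus B(o,R)$ with $\eps=\eps(C)\to 0$. Writing $x(z):=[oz)\cap\dC$, in the hyperbolic space $(\mathcal{U}_{x(z)},\h_{x(z)})$ the distance function $\rho_{\h_{x(z)}}:=d_{\h_{x(z)}}(o,\cdot)$ satisfies the classical identity $\Delta_{\h_{x(z)}}\rho_{\h_{x(z)}}=(n-1)\coth(\rho_{\h_{x(z)}})\geqslant n-1$. Since $\rho$ agrees with $\rho_{\h_{x(z)}}$ along the ray $[ox(z))$ up to a bi-Lipschitz reparametrization and $g$ is $K$-close to $\h_{x(z)}$ near $z$, the $g$-Laplacian of $\rho$ at $z$ should differ from $\Delta_{\h_{x(z)}}\rho_{\h_{x(z)}}(z)$ by at most $O(C-1)$. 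Granted this, applying the divergence theorem to any compact $D\subset\C\smallsetminus B(o,R)$ with smooth boundary yields
\begin{equation*}
(n-1)(1-\eps)\,\mathrm{vol}_g(D)\leqslant\int_D\Delta_g\rho\,d\mathrm{vol}_g=\int_{\partial D}g(Y,\nu_g)\,d\mathrm{area}_g\leqslant C_1^{1/2}\,\mathrm{area}_g(\partial D),
\end{equation*}
giving $\cheeger(\C\smallsetminus B(o,R))\geqslant C_3^{-1}(n-1)$ with $C_3:=C_1^{1/2}/(1-\eps)\to 1$ as $C\to 1$, as required.

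The main obstacle is the divergence estimate: our bi-Lipschitz control between $g$ and $\h_x$ is only $C^0$, while divergences involve first derivatives of the metric. I would resolve this by exploiting the smooth dependence $x\mapsto\h_x$ from Corollary \ref{cor_result_for_essential_spectrum}(1), the explicit construction of the $\h_x$ via osculating ellipsoids in the proof of Proposition \ref{prop_hilbert_are_bilipschitz_at_infinity}, and the regularity of the Finsler structure, which together should provide the requisite $C^1$-level comparison between $\rho$ and $\rho_{\h_{x(z)}}$ and between $g$ and $\h_{x(z)}$ near $z$.
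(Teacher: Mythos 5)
Your overall plan (Gauss--Green with $\Delta_g\rho\ge (n-1)-\eps$ for $g$ the dual of $\sigma$) is \emph{not} the route the paper takes, and the obstacle you flag at the end is a genuine gap, not a technical detail. The bi-Lipschitz comparisons supplied by Lemma~\ref{lem_comparison_between_F_and_sigma} and Corollary~\ref{cor_result_for_essential_spectrum} are purely $C^0$ (ratios of norms), whereas your key step $\Delta_g\rho\geqslant(n-1)(1-\eps)$ needs control of \emph{two} derivatives of $\rho$ (equivalently one derivative of $g$ relative to $\h_{x(z)}$, plus a comparison of the Hessians of $\rho$ and $\rho_{\h_{x(z)}}$). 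A $C^0$-small perturbation of a metric can change a distance function's Laplacian by an arbitrarily large amount (think of a rapidly oscillating conformal factor close to $1$), so ``should differ by $O(C-1)$'' cannot follow from what is established; it would require a separate $C^1$-level comparison between $g$ and the osculating hyperbolic metrics, which is never proved in the paper and is not an obvious consequence of the construction. Your proposed fix (``smooth dependence'' and the explicit osculating ellipsoids) names the right ingredients but does not close the gap.

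The paper's proof sidesteps the derivative problem entirely by a cone decomposition. Writing $D$ as a finite union (in fact a partition) of pieces $K_{x_i}\cap D$, where each $K_{x_i}$ is a cone with vertex $o$ whose lateral faces are geodesics both for $F_\C$ and for $\h_{x_i}$, and where $K_{x_i}\cap D\subset\mathcal{U}_{x_i}(C)$, one applies the divergence theorem \emph{exactly} inside each cone with the single fixed hyperbolic metric $\h_{x_i}$ and its own distance function. Because $\nabla^{\h_{x_i}}\rho_{\h_{x_i}}$ is radial and the cone faces are radial geodesics, the lateral boundary terms vanish, yielding $\mathrm{Area}^{\h_{x_i}}(K_{x_i}\cap\partial D)\geqslant(n-1)\,\mathrm{Vol}^{\h_{x_i}}(K_{x_i}\cap D)$ with no error at all. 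Only after this exact hyperbolic estimate does one invoke the $C^0$ bi-Lipschitz control, and then only to compare the area and volume \emph{densities} of $\sigma$ and $\h_{x_i}$, which is precisely what $C^0$ bounds give. Summing over the partition finishes the proof. So the upshot: same Gauss--Green philosophy, but localized to cones so that all the derivative computations are done in a single model hyperbolic metric where they are exact, and the only approximation used is zeroth order. You should restructure your argument along these lines; as written, the central inequality does not follow from the available hypotheses.
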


\begin{proof}
 Let $C>1$. Let $R= R(C)\geqslant0$, $\mathcal{U}_x(C)$ and $\h_x$, $x\in \dC$, be given by Corollary \ref{cor_result_for_essential_spectrum}. Let $D$ be a compact domain in $\C \smallsetminus B(o,R)$ with smooth boundary. As the goal is to control the Cheeger constant, we can suppose that $D$ is a convex domain, because convex sets minimize the ratio of area over volume.

For each $x\in \dC$, let $K_x$ be a family of open cones with vertex $o$ such that, for any $x \in \dC$, $K_x\cap D \subset \mathcal{U}_{x}(C)$ and $ \displaystyle \cup_{x \in \dC} K_x$ covers $D$. Such a family exists because $\cup_{x \in \dC} \mathcal{U}_x $ openly covers $\C \smallsetminus B(o,R)$. Remark that the boundary of $K_x$ is a union of geodesics of $F_{\C}$, which are also geodesics of $\h_{x}$.

Now, by compactness of $D$, there exist $x_1, \dots, x_k \in \partial \C$ such that $\cup_i K_{x_i}$ openly covers $D$. By choosing the cones $K_{x_i}$ to be smaller if necessary, we can assume that the domain $D$ is partitioned into $\cup_{1 \leqslant i \leqslant k} (K_{x_i} \cap D)$.

\begin{claim} \label{claim_computation_in_hyperbolic}
 For any $1 \leqslant i \leqslant k$, we have 
\begin{equation*}
 \frac{\mathrm{Area}^{\h_{x_{i}}} \left(K_{x_i} \cap \partial D \right) }{ \mathrm{Vol}^{\h_{x_{i}}} \left(K_{x_i} \cap D \right) } \geqslant (n-1).
\end{equation*}
\end{claim}

\begin{proof}[Proof of Claim \ref{claim_computation_in_hyperbolic}]
As $\h_{x_{i}}$ is a hyperbolic metric and the sides of $K_{x_i}$ are geodesics of $\h_{x_{i}}$, we have that
\begin{equation*}
 \frac{\mathrm{Area}^{\h_{x_{i}}} \left(K_{x_i} \cap \partial D \right) }{ \mathrm{Vol}^{\h_{x_{i}}} \left(K_{x_i}\right) } \geqslant (n-1).
\end{equation*}
Indeed, as we are in the hyperbolic setting, this can be proved by a direct computation using the divergence formula. As $\mathrm{Vol}^{\h_{x_{i}}} \left(K_{x_i}\right) \geqslant \mathrm{Vol}^{\h_{x_{i}}} \left(K_{x_i} \cap D \right)$, we get the claim.
\end{proof}

For all $1\leqslant i \leqslant k$ holds $ C^{-1} \h_{x_{i}} \leqslant \F^2 \leqslant  C \h_{x_{i}}$ on $\mathcal{U}_{x_{i}}(C)$. As in Lemma \ref{lem_comparison_between_F_and_sigma}, there exists a constant $C_1':=C'_1(C)$ such that, on $\mathcal{U}_{x_{i}}(C)$,
$$C_1'^{-1} \h_{x_{i}} \leqslant \sigma^{\ast} \leqslant  C_1' \h_{x_{i}},\ 1\leqslant i \leqslant k,$$  and, furthermore, $\lim_{C\to 1} C'_1(C) =1$.

Hence, for any domain $U$ in $\mathcal{U}_{x_{i}}(C)$, and in particular for $K_{x_i} \cap D $, we have 
\begin{align*}
 C_1'^{-n-1} \int_{\partial U} d\mathrm{area}^{\h_{x_{i}}} &\leqslant \int_{\partial U} d\mathrm{area}^{\sigma} \leqslant C_1'^{n-1} \int_{\partial U} d\mathrm{area}^{\h_{x_{i}}}, \\
 C_1'^{-n} \int_{U} d\mathrm{vol}^{\h_{x_{i}}} &\leqslant \int_{U} d\mathrm{vol}^{\sigma} \leqslant C_1'^n \int_{U} d\mathrm{vol}^{\h_{x_{i}}}.
\end{align*}
So, thanks to Claim \ref{claim_computation_in_hyperbolic}, we get 
\begin{equation*}
 \frac{\mathrm{Area}^{\sigma} \left(K_{x_i} \cap \partial D \right) }{ \mathrm{Vol}^{\sigma} \left(K_{x_i} \cap D \right) } \geqslant C_1'^{-2n-1} (n-1).
\end{equation*}
Setting $C_3 :=  C_1'^{2n+1}$, we have
\begin{equation*}
 \mathrm{Area}^{\sigma}(\partial D) = \sum_{i=1}^k \mathrm{Area}^{\sigma} (K_{x_i} \cap \partial D) 
    \geqslant C_3^{-1} (n-1) \sum_{i=1}^k \mathrm{Vol}^{\sigma} (K_{x_i} \cap D)  = C_3^{-1} (n-1) \mathrm{Vol}^{\sigma}(D),
\end{equation*}
Finally, $C_3$ tends to $1$ when $C$ tends to $1$, because it is the case for $C_1'$.
\end{proof}

We can now complete a
\begin{proof}[Proof of Theorem \ref{thm_essential_spectrum}] It remains to show that the infimum of the essential spectrum is greater than $(n-1)^2/4$. Let $C>1$. Combining Proposition \ref{prop_decomposition_principle} with Lemmas \ref{lem_cheeger_a_poid_vs_cheeger} and \ref{lem_cheeger_vs_hyperbolic}, we see that 
$$ 4 \inf \sigma_{\textrm{ess}}(F) \geqslant (C_2(C)C_3(C))^{-2}(n-1)^2$$
for some $C_2(C),C_3(C)\geqslant 1$. When $C$ goes to $1$, $C_2(C)$ and $C_3(C)$ also tend to $1$, hence
\begin{equation*}
4 \inf \sigma_{\textrm{ess}}(F) \geqslant (n-1)^2. \qedhere 
\end{equation*}
\end{proof}

\section{Small eigenvalues} \label{sec_small_eigenvalues}

The Hilbert geometries of simplices is a very simple one:

\begin{prop}[\cite{Delaharpe:hilbert_metric_simplices}]\label{dlh}
The Hilbert geometry defined by a simplex of $\RP^n$ is isometric to a normed vector space of dimension $n$.
\end{prop}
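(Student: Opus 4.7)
The plan is to build an explicit isometry from $(\Delta, d_\Delta)$ onto a hyperplane of $\R^{n+1}$ equipped with an appropriate norm. Fix an affine chart in which $\Delta$ is the standard open simplex $\{x\in\R^{n+1}_{>0}:\sum_i x_i=1\}$ and set $H_0:=\{z\in\R^{n+1}:\sum_i z_i=0\}$, an $n$-dimensional vector space. Consider the map
\[
\phi\colon \Delta\longrightarrow H_0,\qquad \phi(x)=(\log x_1,\dots,\log x_{n+1})-\tfrac{1}{n+1}\Bigl(\textstyle\sum_i\log x_i\Bigr)(1,\dots,1),
\]
which is clearly a smooth bijection. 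The goal is to show that $d_\Delta$ pulls back under $\phi$ to the norm $N(z):=\frac{1}{2}\bigl(\max_i z_i-\min_i z_i\bigr)$ on $H_0$.

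The core computation is to evaluate $d_\Delta(x,y)$ in these coordinates. Parametrize the line through $x$ and $y$ as $p(t)=(1-t)x+ty$; writing $\lambda_i:=y_i/x_i$, one has $p(t)_i=x_i\bigl(1+t(\lambda_i-1)\bigr)$. The line hits $\partial\Delta$ whenever some coordinate vanishes, and I would show that the two boundary intersections correspond to
\[
t_a=\tfrac{1}{1-\lambda_{\max}}<0 \quad\text{and}\quad t_b=\tfrac{1}{1-\lambda_{\min}}>1,
\]
where $\lambda_{\max}=\max_i \lambda_i$ and $\lambda_{\min}=\min_i \lambda_i$. Expressing the four Euclidean lengths in the cross-ratio via these parameters, a short calculation gives $[a:b:x:y]=\lambda_{\min}/\lambda_{\max}$, so
\[
d_\Delta(x,y)=\tfrac{1}{2}\log\!\Bigl(\tfrac{\lambda_{\max}}{\lambda_{\min}}\Bigr)=\tfrac{1}{2}\Bigl(\max_i\log\tfrac{y_i}{x_i}-\min_i\log\tfrac{y_i}{x_i}\Bigr).
\]

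The right-hand side depends only on the vector $\log y-\log x\in\R^{n+1}$, and in fact only on its class modulo $\R\cdot(1,\dots,1)$, since adding a constant to every coordinate leaves $\max-\min$ unchanged. Hence $d_\Delta(x,y)=N\bigl(\phi(y)-\phi(x)\bigr)$, establishing that $\phi$ is an isometry onto $(H_0,N)$. Finally one checks that $N$ is genuinely a norm on $H_0$: positivity, positive homogeneity, and the triangle inequality are immediate from the formula; symmetry follows from $\max_i(-z_i)=-\min_i z_i$; and if $N(z)=0$ then $z$ is constant, which together with $\sum_i z_i=0$ forces $z=0$.

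The only substantive step is the cross-ratio computation, which I expect to be the main point to carry out carefully (in particular identifying which boundary facets of $\Delta$ are hit at $t_a$ and $t_b$, and making sure the argument covers the case where $\lambda_{\max}$ or $\lambda_{\min}$ is attained on multiple coordinates). Everything else reduces to elementary properties of the function $z\mapsto\max_i z_i-\min_i z_i$ on a hyperplane.
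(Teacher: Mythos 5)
Your argument is correct and is essentially the classical computation of de la Harpe, which the paper simply cites rather than proving: change to logarithmic coordinates on the open simplex, observe that the entry and exit parameters along a chord are governed by $\max_i(y_i/x_i)$ and $\min_i(y_i/x_i)$, and read off that the Hilbert distance is $\tfrac12\bigl(\max_i\log(y_i/x_i)-\min_i\log(y_i/x_i)\bigr)$, which is translation-invariant in $\log$-coordinates and a norm on the trace-zero hyperplane. The cross-ratio bookkeeping you flag as the ``substantive step'' checks out with the paper's convention $[a:b:x:y]=\tfrac{|ax|/|bx|}{|ay|/|by|}$, giving $\lambda_{\min}/\lambda_{\max}$ as claimed; the degenerate cases (several coordinates attaining the max or min) cause no trouble since that only means the chord meets a lower-dimensional face of $\partial\Delta$, and the formula for $t_a$, $t_b$ is unchanged.
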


In this section, we construct, by taking $C^2$-approximations of simplices, properly convex sets with arbitrarily many, arbitrarily small eigenvalues.

\begin{thm} \label{thm_small_eigenvalues}
 Let $\eps >0$ and $N \in \N$. There exists a regular Hilbert geometry $(\C,\d)$ such that the $N$ first eigenvalues of $-\Delta^{F_{\C}}$ are below $\eps$.
\end{thm}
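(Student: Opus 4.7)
The strategy is to invoke Proposition \ref{dlh}, which identifies the Hilbert geometry of a simplex with a flat normed vector space, to construct $N$ test functions with arbitrarily small Rayleigh quotient on a $C^2$-smoothing of the simplex, and then to apply the Min--Max principle. Without loss of generality we may assume $\eps < (n-1)^2/4$; by Theorem \ref{thm_essential_spectrum} any value below $\eps$ then lies strictly below the essential spectrum, so an upper bound on $\inf_{V_N}\sup R^{F}$ translates into an upper bound on the $N$-th genuine eigenvalue.

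Fix a simplex $\Delta \subset \RP^n$ and the isometry $\Phi \colon (\Delta, d_\Delta) \to (\R^n, \|\cdot\|)$ given by Proposition \ref{dlh}. The polyhedral norm $\|\cdot\|$ is bi-Lipschitz to an auxiliary Euclidean norm $|\cdot|$ on $\R^n$ with a constant $C_0 = C_0(n)$. Pick $\phi \in C_c^\infty(\R^n)$ nonnegative, supported in $\{|x| \leq 1\}$, with $\int \phi^2 = 1$, and for $r \geq 1$ set $\phi_r(x) := r^{-n/2}\, \phi(x/r)$, so that $\int \phi_r^2 = 1$ and $\int |\nabla \phi_r|^2 = e_0/r^2$, where $e_0 := \int |\nabla \phi|^2$. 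Choose $N$ translates $\phi_r^{(1)},\dots,\phi_r^{(N)}$ of $\phi_r$ with pairwise disjoint supports, all contained in a fixed Euclidean ball $B_R \subset \R^n$.

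Next, approximate $\Delta$ by regular convex bodies $\C_k \subset \RP^n$ (with $C^2$ boundary of positive definite Hessian) converging to $\Delta$ in Hausdorff distance. The explicit formula
$$F_{\mathcal{C}}(x,u) = \frac{|u|}{2}\left(\frac{1}{|u^-x|} + \frac{1}{|xu^+|}\right)$$
shows that on the compactum $K := \Phi^{-1}(B_R) \Subset \Delta^\circ$, the Finsler metrics $F_{\C_k}$ converge uniformly on $TK$ to the translation-invariant metric $\|\cdot\|$ (pushed forward by $\Phi^{-1}$). In particular, for large $k$, $F_{\C_k}$ is $2C_0$-bi-Lipschitz to $|\cdot|$ on $K$. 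Proposition \ref{prop_control_of_sigma} then yields a pointwise bi-Lipschitz comparison of the symbol $\sigma^{F_{\C_k}}$ with the Euclidean scalar product, while Lemma \ref{lem_control_of_m_mu} controls the density of $\Omega^{F_{\C_k}}$ against Euclidean volume. Combining these estimates through the identity $E^F(f) = \int \|df\|^2_{\sigma^F}\,\Omega^F$ we obtain, uniformly in $k$ large,
$$R^{F_{\C_k}}\!\bigl(\phi_r^{(i)}\bigr) \leq C_1(n)\,\frac{e_0}{r^2}, \qquad i = 1, \dots, N,$$
which is $< \eps$ once $r$ is chosen sufficiently large.

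Since the $\phi_r^{(i)}$ have disjoint supports, they are pairwise $L^2(\C_k,\Omega^{F_{\C_k}})$-orthogonal and span an $N$-dimensional subspace $V \subset H^1(\C_k)$ on which $\sup_{u \in V} R^{F_{\C_k}}(u) \leq \max_i R^{F_{\C_k}}(\phi_r^{(i)}) < \eps$. Combined with our assumption $\eps < (n-1)^2/4 = \inf \sigma_{\textrm{ess}}$ from Theorem \ref{thm_essential_spectrum}, the Min--Max principle (Theorem \ref{thm_min_max}) yields at least $N$ genuine eigenvalues below $\eps$. The main technical hurdle is the uniform convergence $F_{\C_k} \to \|\cdot\|$ on $TK$: because the simplex has corners, one cannot invoke any general $C^2$-perturbation result and must verify convergence by hand from the definition of $F_{\C}$, using that for $(x,u) \in TK$ with $K \Subset \Delta^\circ$ fixed, the intersection points $u^\pm$ of the line $x+\R u$ with $\partial \C_k$ depend continuously on $\C_k$ in Hausdorff distance, uniformly in $(x,u) \in TK$.
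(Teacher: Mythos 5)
Your proof is correct, and its overall architecture coincides with the paper's: fix a simplex, exploit Proposition~\ref{dlh} (the Hilbert metric of a simplex is isometric to a flat normed vector space, hence volume grows only polynomially), approximate by $C^2$ convex bodies, produce $N$ test functions of disjoint support with small Rayleigh quotient on these approximants, and invoke the Min--Max principle together with Theorem~\ref{thm_essential_spectrum}. Where you diverge is in the choice of test functions and of the estimating machinery. The paper takes plateau functions $f_{R,m,x}$ that are $1$ on an intrinsic ball $B_m(x,R)$, linear in the $F_{\C_m}$-distance on the annulus, and $0$ outside $B_m(x,R+1)$; the bound $E^{F_m}(f)\leqslant n\cdot\mathrm{vol}(\text{annulus})$ then follows from $|L_{X_m}\pi^{\ast}f|\leqslant 1$, and the Rayleigh quotient is controlled by the ratio $\bigl((R+1)^n-R^n\bigr)/R^n$ computed directly in the normed vector space, with Lemma~\ref{lem_uniform_convergence_of_convex} giving the stability under the Hausdorff approximation. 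You instead push smooth dilated Euclidean bumps $\phi_r$ through the affine chart, pass through the auxiliary bi-Lipschitz estimate against a Euclidean norm, and then invoke Proposition~\ref{prop_control_of_sigma} and Lemma~\ref{lem_control_of_m_mu} to transfer the elementary bound $\int|\nabla\phi_r|^2/\int\phi_r^2 = e_0/r^2$ to the Finsler Rayleigh quotient. Your version is conceptually more transparent (the smallness is literally the Dirichlet eigenvalue of a large Euclidean ball), at the price of routing through two extra bi-Lipschitz layers and having to justify the symbol and volume-density control; the paper's plateau-function estimate avoids the symbol altogether by using the $1$-Lipschitz bound on $|L_X\pi^\ast f|$, and is slightly more intrinsic since all volumes are computed in the normed space without reference to Euclidean structure. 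Both arguments are sound; the one technical point you correctly flag---uniform convergence $F_{\C_k}\to F_\Delta$ on $TK$ for $K\Subset\Delta^\circ$---is handled in the paper by the slightly more convenient sandwich $S_\infty\subset\C_m\subset S_m$ with $S_m\to S_\infty$, which yields the two-sided comparison of Lemma~\ref{lem_uniform_convergence_of_convex} directly from monotonicity of $F_{\C}$ in $\C$; you would do well to impose $\Delta\subset\C_k$ so that the test functions are defined and the same monotonicity is available on one side.
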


Let $S_m$ be a family of simplices in $\R^{n}$ converging in the Hausdorff distance to a simplex $S_{\infty}$ and such that, for all $m$, $\overline{S_{\infty}}\subset S_m$. Let $\C_m$ be a family of convex subsets of $\R^n$ defining regular Hilbert geometries such that, for all $m$, $S_{\infty}\subset \C_m \subset S_m$.
For simplicity, we write $F_m$ instead of $F_{\C_m}$ and $F_{\infty}$ instead of $F_{S_{\infty}}$. We write $\Omega^{F_{\infty}}$ for the Holmes--Thompson volume of $F_{\infty}$.
\begin{lem} \label{lem_uniform_convergence_of_convex}
Let $K$ be a compact set in $S_{\infty}$. Let $\mu_m \colon \S_{\infty} \rightarrow \R^+$ be the function such that $\Omega^{F_m} = \mu_m \Omega^{F_{\infty}}$. For any $\eta>1$, there exists $M=M(K,\eta) \in \N$ such that, for any $x,y \in K$ and $m \geqslant M$, we have
\begin{equation*}
 \eta^{-1} d_{S_{\infty}}(x,y) \leqslant d_{\C_m}(x,y) \leqslant d_{S_{\infty}}(x,y)
\end{equation*}
and
\begin{equation*}
 \eta^{-1} \leqslant \mu_m(x) \leqslant \eta.
\end{equation*}
\end{lem}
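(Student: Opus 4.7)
The key observation is the monotonicity of the Hilbert distance (and of the Finsler norm generating it) under inclusion of convex sets: if $\mathcal{D}\subset \mathcal{D}'$ are two properly convex open sets and $x\in \mathcal{D}$, then at any $(x,v)\in T_x\mathcal{D}\setminus\{0\}$ the intersections of the line $x+\mathbb{R}v$ with $\partial\mathcal{D}$ lie between $x$ and those with $\partial \mathcal{D}'$, so the formula for $F_{\mathcal{D}}$ and a direct computation on the cross-ratio give $F_{\mathcal{D}'}\leqslant F_{\mathcal{D}}$ and consequently $d_{\mathcal{D}'}\leqslant d_{\mathcal{D}}$. Applying this to the sandwich $S_{\infty}\subset \mathcal{C}_m\subset S_m$ immediately yields the upper bounds
\[
d_{\mathcal{C}_m}(x,y)\leqslant d_{S_{\infty}}(x,y)\quad \text{and}\quad F_m(x,\cdot)\leqslant F_{\infty}(x,\cdot)\ \text{ on }T S_{\infty}|_K,
\]
and it reduces the two lower bounds to comparing the \emph{outer} geometry $(S_m,d_{S_m},F_{S_m})$ with the \emph{inner} one $(S_{\infty},d_{S_{\infty}},F_{\infty})$, via
\[
d_{S_m}\leqslant d_{\mathcal{C}_m},\qquad F_{S_m}\leqslant F_m\leqslant F_{\infty}.
\]

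The geometric input is the following uniform-intersection fact. Since $K$ is compact in $S_{\infty}$ and $\overline{S_{\infty}}\subset S_m$, the Euclidean distance from $K$ to $\partial S_m$ is bounded below by some $\delta_m\geqslant \delta_{\infty}>0$, independent of $m$. Using the Hausdorff convergence $S_m\to S_{\infty}$, one shows that for any line $x+\mathbb{R}v$ with $x\in K$ and $v$ on the Euclidean unit sphere, the two intersection points $v^{\pm}_m(x)$ of this line with $\partial S_m$ converge to $v^{\pm}_{\infty}(x)$ uniformly in $(x,v)$; here the lower bound $\delta_{\infty}$ makes the implicit-function/transversality argument uniform despite the corners of $S_{\infty}$. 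Plugging this uniform convergence into the formula for $F_{S_m}$ gives $F_{S_m}\to F_{\infty}$ uniformly on $K\times \mathbb{S}^{n-1}$, and the analogous statement for pairs $x,y\in K$ gives $d_{S_m}(x,y)\to d_{S_{\infty}}(x,y)$ uniformly on $K\times K$. Combined with the sandwich of the first paragraph, this yields the distance bounds for $m$ sufficiently large (depending on $\eta$ and $K$).

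It remains to convert the uniform convergence $F_m\to F_{\infty}$ on unit vectors over $K$ into the volume estimate $\eta^{-1}\leqslant \mu_m\leqslant \eta$. Recall that $\Omega^{F}$ is (up to the constant $(n-1)!$) the Holmes--Thompson volume, which at a point $x$ can be written as a multiple of the Lebesgue measure of the dual cotangent unit ball $B^{\ast}_{F,x}=\{p\in T^{\ast}_x M\colon F^{\ast}(x,p)\leqslant 1\}$. Uniform convergence $F_m\to F_{\infty}$ on the Euclidean unit tangent sphere over $K$ is equivalent, by the continuity of the Legendre transform (Lemma \ref{lem_continuity_l}), to uniform convergence of $F^{\ast}_m$ to $F^{\ast}_{\infty}$ on the Euclidean unit cosphere over $K$; hence the dual unit balls $B^{\ast}_{F_m,x}$ converge to $B^{\ast}_{F_{\infty},x}$ in Hausdorff distance uniformly in $x\in K$. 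Lebesgue measure is continuous under Hausdorff convergence of convex bodies with a uniform positive in-radius (the latter holds because $F^{\ast}_{\infty}$ is bounded on $K$ by compactness and monotonicity), so $\mu_m(x)=\Omega^{F_m}(x)/\Omega^{F_{\infty}}(x)\to 1$ uniformly on $K$, giving the second bound.

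The main delicate point is the third paragraph: one must justify that the Holmes--Thompson volume depends continuously on the Finsler norm even though $F_{\infty}$ is only piecewise smooth (the corners of $S_{\infty}$ make $F_{\infty}$ fail to be a genuine Finsler metric). This is handled by observing that all the dual unit balls are uniformly bounded convex bodies with uniform positive in-radius over $K$, so Hausdorff convergence of the boundaries implies convergence of volumes by a standard convex-geometry estimate; the non-smoothness of $F_{\infty}$ plays no role once one works at the level of cotangent unit balls and Lebesgue measure.
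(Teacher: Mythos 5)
Your proof takes essentially the same approach as the paper's: the monotonicity sandwich $F_{S_m}\leqslant F_m\leqslant F_{S_\infty}$ together with uniform convergence $F_{S_m}/F_{S_\infty}\to 1$ on compacta (the paper's proof stops there with ``this is enough to conclude,'' and your write-up is a fleshed-out version of that line). One streamlining for the volume step: rather than routing through Lemma~\ref{lem_continuity_l} (which is stated for smooth strongly convex norms and so does not literally apply to the simplex norm $F_\infty$) and then Hausdorff convergence with in-radius control, it is cleaner to dualize the sandwich directly---$C^{-1}F_\infty\leqslant F_m\leqslant F_\infty$ gives $F^{*}_\infty\leqslant F^{*}_m\leqslant C F^{*}_\infty$, whence $C^{-n}\leqslant\mu_m\leqslant 1$ by monotonicity of Lebesgue measure of the dual unit balls, which is exactly estimates~(\ref{eq_dual_control}) and~(\ref{eq_control_volume}) of Lemma~\ref{lem_control_of_m_mu} and avoids the circularity you rightly flag at the end.
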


\begin{proof}
As $S_{\infty}\subset \C_m \subset S_m$, we have, for any $x \in S_{\infty}$ and $m\in \N$,
\begin{equation*}
F_{S_{m}}(x,\cdot) \leqslant F_{\C_m}(x,\cdot) \leqslant F_{S_{\infty}}(x,\cdot).
\end{equation*}
As $S_m$ converges to $S_{\infty}$ when $m$ tends to infinity, the ratio $F_{S_{m}}/F_{S_{\infty}}$, defined on $HS_{\infty}$, converges uniformly on compact subsets of $HS_{\infty}$ to $1$. This is enough to conclude.
\end{proof}

Denote by $B_m(x,R)$ the open metric ball of radius $R>0$ centered at $x$ for $\C_m$, $0\leqslant m \leqslant \infty$. For given $R>0$, $m\in \N$ and $x\in S_{\infty}$, we define the function $f_{R,m,x} \colon \C_m \rightarrow \R$ by
\begin{equation*}
 f_{R,m,x}(y) = \begin{cases}
               1 & \text{if } y \in B_m(x,R) \\
	       (R+1) - d_{\C_m}(x,y) & \text{if } y \in B_m(x,R+1) \smallsetminus B_m(x,R) \\
	       0  & \text{if } y \in \C_m \smallsetminus B_m(x,R+1).
              \end{cases}
\end{equation*}

\begin{lem} \label{lem_function_with_small_Rayleigh_quotient}
 Let $\eps>0$. Let $R >0$ be chosen such that $\left((R+1)^{n} - R^n \right)/R^n < \eps/(8n)$. Let $x\in S_{\infty}$ and $K$ be a compact set in $S_{\infty}$ containing $B_m(x,R+1)$ for all $m$ big enough. There exists $M \in \N$, depending on $K$ and $\eps$, such that, for all $m \geqslant M$,
\[
 R^{F_{m}}(f_{R,m,x}) \leqslant \eps/2.
\]
\end{lem}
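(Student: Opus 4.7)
The approach is to bound the numerator and the denominator of the Rayleigh quotient separately and to compare both to quantities computed in the limit normed space $(S_\infty, F_\infty)$, using that the Hilbert geometry of a simplex is isometric to a normed vector space (Proposition \ref{dlh}). In such a normed space the Holmes--Thompson volume form is translation-invariant, hence a constant multiple of Lebesgue measure, so there exists $c>0$ with $\mathrm{Vol}^{F_\infty}(B_\infty(y,r)) = c\,r^n$ for every $y\in S_\infty$ and every $r>0$ such that the ball is contained in $S_\infty$.

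For the numerator, observe that $f_{R,m,x}$ is constant outside the annulus $A_m := B_m(x,R+1)\setminus B_m(x,R)$, and on $A_m$ equals $(R+1)-d_{\C_m}(x,\cdot)$, which is $1$-Lipschitz for $F_m$. Hence on the unit tangent bundle over $A_m$ one has $|L_X(\pi^* f_{R,m,x})|\leqslant 1$, and $L_X(\pi^* f_{R,m,x})=0$ elsewhere. Using the splitting $A\wedge dA^{n-1}=\alpha^{F_m}\wedge \pi^*\Omega^{F_m}$ together with Proposition~\ref{prop:construction}, one immediately gets
\begin{equation*}
E^{F_m}(f_{R,m,x}) \;\leqslant\; n\cdot \mathrm{Vol}^{F_m}(A_m).
\end{equation*}
For the denominator, $f_{R,m,x}\equiv 1$ on $B_m(x,R)$, so $\int f_{R,m,x}^2\,\Omega^{F_m}\geqslant \mathrm{Vol}^{F_m}(B_m(x,R))$.

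The heart of the proof is to transfer these volumes to the limit simplex via Lemma~\ref{lem_uniform_convergence_of_convex}. Given $\eta>1$ close to $1$, and $m$ large enough so that both estimates of Lemma~\ref{lem_uniform_convergence_of_convex} hold on $K$, the inequalities $d_{\C_m}\leqslant d_{S_\infty}\leqslant \eta\, d_{\C_m}$ yield $B_\infty(x,R)\subset B_m(x,R)$ and $B_m(x,R+1)\subset B_\infty(x,\eta(R+1))$; combined with $\eta^{-1}\Omega^{F_\infty}\leqslant \Omega^{F_m}\leqslant \eta\, \Omega^{F_\infty}$ on $K$ and the volume scaling in the normed space, this produces
\begin{equation*}
\mathrm{Vol}^{F_m}(B_m(x,R))\;\geqslant\; \eta^{-1} c\,R^{n}, \qquad
\mathrm{Vol}^{F_m}(A_m)\;\leqslant\; \eta\, c\,\bigl(\eta^{n}(R+1)^{n}-R^{n}\bigr).
\end{equation*}
Consequently,
\begin{equation*}
R^{F_m}(f_{R,m,x}) \;\leqslant\; n\,\eta^{2}\cdot \frac{\eta^{n}(R+1)^{n}-R^{n}}{R^{n}}.
\end{equation*}

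It remains to pick $\eta$ (hence $m$) small enough. Since $R$ has been fixed by the hypothesis $\bigl((R+1)^n - R^n\bigr)/R^n < \eps/(8n)$, one can choose $\eta=\eta(\eps,R,n)>1$ close enough to $1$ so that $\eta^{2}\bigl(\eta^{n}(R+1)^{n}-R^{n}\bigr)\leqslant 4\bigl((R+1)^{n}-R^{n}\bigr)$, which gives $R^{F_m}(f_{R,m,x})\leqslant \eps/2$ for every $m\geqslant M$, where $M$ is the integer furnished by Lemma~\ref{lem_uniform_convergence_of_convex} applied to this $\eta$ and $K$. The main (and essentially only) subtle point is the one I have singled out as the ``key transfer'': verifying that the ball inclusions and the bound on $\mu_m$ let one genuinely reduce the computation to the Euclidean-like volume scaling in the normed space $(S_\infty, F_\infty)$; once that is in place, everything else is the standard Rayleigh-quotient estimate on a bump-like plateau function.
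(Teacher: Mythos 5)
Your proposal is correct and follows essentially the same route as the paper: bound the energy by the volume of the annulus $B_m(x,R+1)\smallsetminus B_m(x,R)$, bound the $L^2$-norm from below by the volume of the inner ball, transfer both volumes to the limit simplex via Lemma~\ref{lem_uniform_convergence_of_convex}, and use the cone-like volume growth of the normed space $(S_\infty,F_\infty)$. The only (cosmetic) difference is in the final choice of $\eta$: the paper fixes $\eta$ through two explicit $R$-independent conditions ($n\eta^{n+2}(1-\eta^{-n})\leqslant\eps/4$ and $\eta^{n+2}\leqslant 2$), whereas you pick it by a continuity argument depending on $R$; both lead to the same bound.
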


\begin{proof}
We write $X_m$ for the generator of the geodesic flow of $F_m$ on $H\C_m$. Let us start by giving a first bound on $ R^{F_{m}}(f_{R,m,x})$:
\begin{align*}
 R^{F_{m}}(f_{R,m,x}) &= \frac{n}{\voleucl(\S^{n-1})} \frac{\int_{H\C_m} \left(L_{X_m}\pi^{\ast} f_{R,m,x} \right)^2 A^{F_m}\wedge \left(dA^{F_m}\right)^{n-1} } {\int_{\C_m} f^2 \Omega^{F_m}} \\
     &= \frac{n}{\voleucl(\S^{n-1})}\frac{\int_{H\left(B_m(x,R+1) \smallsetminus \overline{B_m(x,R)} \right)} \left(L_{X_m}\pi^{\ast} f_{R,m,x} \right)^2 A^{F_m}\wedge \left(dA^{F_m}\right)^{n-1} } {\int_{B_m(x,R+1)} f^2 \Omega^{F_m}}\\
     &\leqslant n \frac{\int_{B_m(x,R+1) \smallsetminus B_m(x,R)} \Omega^{F_m}} {\int_{B_m(x,R)} \Omega^{F_m}}.
\end{align*}
So all we have to do now is give an upper bound of $\int_{B_m(x,R+1) \smallsetminus B_m(x,R)} \Omega^{F_m}$ and a lower bound of $\int_{B_m(x,R)} \Omega^{F_m}$ such that their ratio is as small as we want for $R$ and $m$ big.

Let $\eta>1$ be such that 
\begin{align*}
 n \eta^{n+2}(1 - \eta^{n}) &\leqslant \eps/4 \\
 \eta^{n+2} &\leqslant 2.
\end{align*}
By Lemma \ref{lem_uniform_convergence_of_convex}, there exists $M \in \N$, depending on $K$ and $\eps$, such that, for all $m \geqslant M$,
\begin{equation*}
 \eta^{-1} d_{S_{\infty}}(x,y) \leqslant d_{\C_m}(x,y) \leqslant d_{S_{\infty}}(x,y).
\end{equation*}
Hence, for $m \geqslant M$, we have $B_{\infty}(x,R) \subset B_{m}(x,R) \subset B_{\infty}(x, \eta R)$, and 
\[
 B_m(x,R+1) \smallsetminus B_m(x,R) \subset B_{\infty}(x, \eta (R+1)) \smallsetminus B_{\infty}(x,R).
\]
The second part of Lemma \ref{lem_uniform_convergence_of_convex} gives then
\begin{align*}
 \int_{B_m(x,R+1) \smallsetminus B_m(x,R)} \Omega^{F_m} &\leqslant \eta \int_{B_{\infty}(x,\eta(R+1)) \smallsetminus B_{\infty}(x,R)} \Omega^{F_{\infty}}, \\
 \int_{B_m(x,R)} \Omega^{F_m} &\geqslant  \eta^{-1} \int_{B_{\infty}(x,R)} \Omega^{F_{\infty}}.
\end{align*}
Now, since the Hilbert geometry $(S_{\infty},d_{S_{\infty}})$ is isometric to a normed vector space (Proposition \ref{dlh}), it is easy to compute volumes: there is some $C>0$ such that
\begin{align*}
 \int_{B_{\infty}(x,\eta(R+1)) \smallsetminus B_{\infty}(x,R)} \Omega^{F_{\infty}} &= C \voleucl(\S^{n-1})  \left( \eta^n (R+1)^n -R^n \right) \\
 \int_{B_{\infty}(x,R)} \Omega^{F_{\infty}} &= C \voleucl(\S^{n-1}) R^n.
\end{align*}
Finally, we get
$$
  R^{F_{m}}(f_{R,m,x}) \leqslant n \eta^{2} \frac{\eta^n (R+1)^n -R^n}{R^n} = n \eta^{n+2} \left( (1-\eta^{-n}) + \frac{(R+1)^n -R^n}{R^n} \right) \leqslant \eps/2,
$$
where the last inequality is obtained thanks to our assumptions on $R$ and $\eta$.
\end{proof}

We can now finish the
\begin{proof}[Proof of Theorem \ref{thm_small_eigenvalues}]
 Let $\eps>0$ and $N \in \N$. Choose $R >0$ as in Lemma \ref{lem_function_with_small_Rayleigh_quotient} and points $x_1, \dots, x_N$ in $S_{\infty}$ such that the $d_{S_{\infty}}$-distance between each pair of points is at least $2R +3$. Then pick a compact set $K \subset S_{\infty}$ containing all the balls $B_{m}(x_i,R+1)$ for $m$ big enough. Such a compact set exists: for instance, take a compact set which contains the balls $B_{\infty}(x_i,R+3)$; then, for $m$ big enough, we have $B_{m}(x_i,R+1) \subset B_{\infty}(x_i,R+3)$. 

By Lemma \ref{lem_function_with_small_Rayleigh_quotient}, there exists $M \in \N$, such that for $m \geqslant M$, the functions $f_{R,m,x_i}$, $1\leqslant i \leqslant N$, are such that
\begin{equation*}
 R^{F_{m}}(f_{R,m,x_i}) \leqslant \eps/2.
\end{equation*}
Furthermore, the $x_i$ are sufficiently apart so that the functions $f_{R,m,x_i}$ have disjoint support. The Min-Max principle (Theorem \ref{thm_min_max}) allows us to conclude that there are at least $N$ eigenvalues below $\eps$.
\end{proof}

\bibliographystyle{amsplaineprint}
\bibliography{bib_BCCV_article}

\def\cprime{$'$} \def\cprime{$'$}
\providecommand{\bysame}{\leavevmode\hbox to3em{\hrulefill}\thinspace}
\providecommand{\MR}{\relax\ifhmode\unskip\space\fi MR }
\providecommand{\MRhref}[2]{%
  \href{http://www.ams.org/mathscinet-getitem?mr=#1}{#2}
}
\providecommand{\href}[2]{#2}
\begin{thebibliography}{10}

\bibitem{Alvarez:Survey}
J.~C. {\'A}lvarez~Paiva, \emph{Some problems on {F}insler geometry}, Handbook
  of differential geometry. {V}ol. {II}, Elsevier/North-Holland, Amsterdam,
  2006, pp.~1--33.

\bibitem{moi:these}
T.~Barthelm\'{e}, \emph{A new {L}aplace operator in {F}insler geometry and
  periodic orbits of {A}nosov flows}, Ph.D. thesis, Universit\'e de Strasbourg,
  2012, \href {http://arxiv.org/abs/1204.0879v1} {\path{arXiv:1204.0879v1}}.

\bibitem{moi:natural_finsler_laplace}
\bysame, \emph{A natural {F}insler--{L}aplace operator}, Israel J. Math. to
  appear (2013), \href {http://arxiv.org/abs/1104.4326v2}
  {\path{arXiv:1104.4326v2}}.

\bibitem{BarthelmeColbois_eigenvalue_control}
T.~Barthelm\'{e} and B.~Colbois, \emph{Eigenvalue control for a
  {F}insler--{L}aplace operator}, Ann. Global Anal. Geom. (2012), \href
  {http://arxiv.org/abs/1206.1439v1} {\path{arXiv:1206.1439v1}}.

\bibitem{Benoist:convexes_div_I}
Y.~Benoist, \emph{Convexes divisibles. {I}}, Algebraic groups and arithmetic,
  Tata Inst. Fund. Res., Mumbai, 2004, pp.~339--374.

\bibitem{HandbookHilbert}
G.~Besson, A.~Papadopoulos, and M.~Troyanov (eds.), \emph{Handbook of {H}ilbert
  geometry}, European Mathematical Society Publisshing House, Zurich, 2013.

\bibitem{BolsinovTaimanov}
A.~V. Bolsinov and I.~A. Taimanov, \emph{Integrable geodesic flows with
  positive topological entropy}, Invent. Math. \textbf{140} (2000), no.~3,
  639--650, \href {http://arxiv.org/abs/9905078} {\path{arXiv:9905078}}.

\bibitem{BuragoBuragoIvanov}
D.~Burago, Y.~Burago, and S.~Ivanov, \emph{A course in metric geometry},
  Graduate Studies in Mathematics, vol.~33, American Mathematical Society,
  Providence, RI, 2001.

\bibitem{ChavelFeldman:Isoperimetric_constants}
I.~Chavel and E.~A. Feldman, \emph{Isoperimetric constants and large time heat
  diffusion in {R}iemannian manifolds}, Differential geometry: {R}iemannian
  geometry ({L}os {A}ngeles, {CA}, 1990), Proc. Sympos. Pure Math., vol.~54,
  Amer. Math. Soc., Providence, RI, 1993, pp.~111--121.

\bibitem{ColboisVernicos_bas_du_spectre}
B.~Colbois and C.~Vernicos, \emph{Bas du spectre et delta-hyperbolicit\'e en
  g\'eom\'etrie de {H}ilbert plane}, Bull. Soc. Math. France \textbf{134}
  (2006), no.~3, 357--381, \href {http://arxiv.org/abs/0411074}
  {\path{arXiv:0411074}}.

\bibitem{ColboisVerovic:hilbert_geometry}
B.~Colbois and P.~Verovic, \emph{Hilbert geometry for strictly convex domains},
  Geom. Dedicata \textbf{105} (2004), 29--42.

\bibitem{Davies:Heat_kernel_bounds}
E.~B. Davies, \emph{Heat kernel bounds, conservation of probability and the
  {F}eller property}, J. Anal. Math. \textbf{58} (1992), 99--119, Festschrift
  on the occasion of the 70th birthday of Shmuel Agmon.

\bibitem{Delaharpe:hilbert_metric_simplices}
Pierre de~la Harpe, \emph{On {H}ilbert's metric for simplices}, Geometric group
  theory, {V}ol.\ 1 ({S}ussex, 1991), London Math. Soc. Lecture Note Ser., vol.
  181, Cambridge Univ. Press, Cambridge, 1993, pp.~97--119.

\bibitem{Donnelly_essential_spectrum}
H.~Donnelly, \emph{On the essential spectrum of a complete {R}iemannian
  manifold}, Topology \textbf{20} (1981), no.~1, 1--14.

\bibitem{DonnellyLi}
H.~Donnelly and P.~Li, \emph{Pure point spectrum and negative curvature for
  noncompact manifolds}, Duke Math. J. \textbf{46} (1979), no.~3, 497--503.

\bibitem{Fou:EquaDiff}
P.~Foulon, \emph{G\'eom\'etrie des \'equations diff\'erentielles du second
  ordre}, Ann. Inst. H. Poincar\'e Phys. Th\'eor. \textbf{45} (1986), no.~1,
  1--28.

\bibitem{margulis}
G.~A. Margulis, \emph{Certain applications of ergodic theory to the
  investigation of manifolds of negative curvature}, Funkcional. Anal. i
  Prilo\v zen. \textbf{3} (1969), no.~4, 89--90.

\bibitem{margulisbook}
\bysame, \emph{On some aspects of the theory of {A}nosov systems}, Springer
  Monographs in Mathematics, Springer-Verlag, Berlin, 2004, With a survey by R.
  Sharp: Periodic orbits of hyperbolic flows.

\bibitem{SchoenYau:lectures}
R.~Schoen and S.-T. Yau, \emph{Lectures on differential geometry}, Conference
  Proceedings and Lecture Notes in Geometry and Topology, I, International
  Press, Cambridge, MA, 1994.

\bibitem{Shen:non-linear_Laplacian}
Z.~Shen, \emph{The non-linear {L}aplacian for {F}insler manifolds}, The theory
  of {F}inslerian {L}aplacians and applications, Math. Appl., vol. 459, Kluwer
  Acad. Publ., Dordrecht, 1998, pp.~187--198.

\bibitem{Vernicos:spectral_radius_hilbert}
C.~Vernicos, \emph{Spectral radius and amenability in {H}ilbert geometries},
  Houston J. Math. \textbf{35} (2009), no.~4, 1143--1169, \href
  {http://arxiv.org/abs/0712.1464} {\path{arXiv:0712.1464}}.

\end{thebibliography}

\end{document}